\setlist[enumerate]{
	font=\normalfont,
	label=(\roman*),
	topsep=3pt,
	itemsep=-0.3ex,
	partopsep=1em,
	parsep=1ex}
\newcommand{\bma}{\left(\begin{array}{cc}}
\newcommand{\ema}{\end{array}\right)}
\newcommand{\bca}{\left(\begin{array}{c}}
\newcommand{\eca}{\end{array}\right)}
\newcommand{\cO}{\ensuremath{\mathcal{O}}}
\newcommand{\cT}{\ensuremath{\mathcal{T}}}
\newcommand{\bC}{\ensuremath{\mathbb{C}}}
\newcommand{\bT}{\ensuremath{\mathbb{T}}}
\newcommand{\la}{\left\langle}
\newcommand{\ra}{\right\rangle}
\numberwithin{equation}{section} 
\newtheorem{thm}{Theorem}[section]
\newtheorem{lemma}[thm]{Lemma}
\newtheorem{prop}[thm]{Proposition}
\newtheorem{corl}[thm]{Corollary}
\theoremstyle{definition} 
\newtheorem{defn}[thm]{Definition}
\newtheorem{construction}[thm]{Construction}
\newtheorem{ntn}[thm]{Notation}
\theoremstyle{remark} 
\newtheorem{rmk}[thm]{Remark}
\newtheorem{example}[thm]{Example}
\DeclareMathOperator{\End}{End}   
\DeclareMathOperator{\Aut}{Aut}  
\DeclareMathOperator{\Fock}{Fock} 
\DeclareMathOperator{\Id}{Id}     
\DeclareMathOperator{\linspan}{span} 
\DeclareMathOperator{\Mult}{Mult} 
\DeclareMathOperator{\KSGNS}{KSGNS}
\newcommand{\catname}[1]{{\normalfont \mathsf{#1}}}
\newcommand{\Ench}{\catname{Enchilada}}
\newcommand{\Quesa}{\catname{Quesadilla}}
\newcommand{\rInd}{\operatorname{\mbox{$r$-$\operatorname{Ind}$}}}
\newcommand{\C}{\mathbb{C}}   
\newcommand{\wt}{\widetilde}
\newcommand{\N}{\mathbb{N}}   
\newcommand{\ol}{\overline}  
\newcommand{\ox}{\otimes}     
\newcommand{\T}{\mathcal{T}} 
\newcommand{\Z}{\mathbb{Z}}   
\newcommand{\stroke}{\mathbin|}     
\newcommand{\cor}[4]{(#1,{}_{#2} #3_{#4})} 
\def\pairL_#1(#2|#3){{}_{#1}(#2\stroke#3)} 
\def\pairR(#1|#2)_#3{(#1\stroke#2)_{#3}} 
\def\scal<#1|#2>{\langle#1\stroke#2\rangle} 
\newcommand{\tpmod}[1]{{\@displayfalse\pmod{#1}}}
\title{\vspace*{-1pc}%
	Morphisms of Cuntz--Pimsner algebras from completely positive maps
}
\author[K. A. Brix]{Kevin Aguyar Brix}
\email{kabrix.math@fastmail.com}
\address[K. A. Brix]{School of Mathematics and Statistics, University of Glasgow, United Kingdom}
\author[A. Mundey]{Alexander Mundey}
\email{amundey@uow.edu.au}
\author[A. Rennie]{Adam Rennie}
\email{renniea@uow.edu.au}
\address[A. Mundey and A. Rennie]{School of Mathematics and Applied Statistics, University of Wollongong, Australia}
\thanks{The authors thank Kylie Fairhall for tex-mex advice, and Menevse Ery\"{u}zl\"{u} for sharing her constructions \cite{E21}. 
	K.A. Brix was supported by a Carlsberg Foundation Internationalisation Fellowship and a DFF-International postdoc (Case number 1025--00004B).
	A. Mundey was supported by University of Wollongong AEGiS CONNECT grant 141765.}
\date{\today}
\dedicatory{In memory of Iain Raeburn,\\ with gratitude for all of his contributions to mathematics and our community.}
\subjclass[2020]{46L55 (Primary); 37A55, 46L08 (Secondary).}
\keywords{Cuntz--Pimsner algebras, Hilbert modules, positive correspondences, bi-Hilbertian bimodules, correspondence projections}
\begin{document}

	\begin{abstract}
		We introduce positive correspondences as right $C^*$-modules with left actions given by completely positive maps. 
		Positive correspondences form a semi-category that contains the \mbox{$C^*$-correspondence} (Enchilada) category as a ``retract''. 
		Kasparov's KSGNS construction provides a semi-functor from this semi-category onto the \mbox{$C^*$-correspondence} category.
		The need for left actions by completely positive maps appears naturally when we consider morphisms between Cuntz--Pimsner algebras,
		and we describe classes of examples arising from projections on \mbox{$C^*$-correspondences} and Fock spaces, as well as examples from conjugation by bi-Hilbertian bimodules of finite index.
	\end{abstract}

\maketitle
\vspace{-2pc}


\parskip=3pt
\parindent=0pt

\addtocontents{toc}{\vspace{-1pc}}

\section{Introduction}
\label{sec:intro}

In this paper we introduce positive correspondences between \mbox{$C^*$-algebras} and examine their interplay with Cuntz--Pimsner algebras. 
In particular, we look at positive correspondences arising from both correspondence projections and conjugations by bi-Hilbertian bimodules, 
and use these to construct \mbox{$C^*$-correspondences} between Cuntz--Pimsner algebras. 
In the general framework of noncommutative geometry, \mbox{$C^*$-algebras} appear as noncommutative topological spaces, 
making them highly suited to study topological dynamical systems (e.g.  symbolic dynamics, directed graphs, $k$-graphs) as well as more general noncommutative dynamics.

For good and bad, $*$-homomorphisms between \mbox{$C^*$-algebras} are very rigid, which can make them hard to find in practice. 
When a right $C^*$-module (a.k.a. a Hilbert module) $X_B$ over a \mbox{$C^*$-algebra} $B$ admits an adjointable left action of a possibly different \mbox{$C^*$-algebra} $A$, we call ${}_AX_B$ a $C^*$-correspondence; 
and we may think of it as a more flexible notion of morphism from $A$ to $B$. 
Indeed, every $*$-homomorphism induces a \mbox{$C^*$-correspondence}.
This perspective of \mbox{$C^*$-correspondences} as morphisms is taken in \cite{Meyer-Sehnem,EKQR,EKQR-littletaco}. 
Accordingly, \mbox{$C^*$-correspondences} provide a flexible framework comparing the structure of \mbox{$C^*$-algebras}, 
and allow us to consider more general classes of dynamics which are not induced by $*$-homomorphisms.

Pimsner \cite{Pimsner} pioneered a construction of a \mbox{$C^*$-algebra} built from a self-correspondence over a (possibly noncommutive) \mbox{$C^*$-algebra.}  
Pimsner's \mbox{$C^*$-algebras} are universal, and generalise both crossed products by $\Z$ and Cuntz--Krieger algebras \cite{Cuntz-Krieger1980}. 
This construction was later refined by Katsura \cite{KatsuraCPalg}, and we refer to these \mbox{$C^*$-algebras} as \emph{Cuntz--Pimsner algebras}. 
In a sense, these algebras encode the dynamics of a \mbox{$C^*$-correspondence}, and they have found considerable use in analysing topological dynamical systems 
\cite{KatsuraTopGraph, Kajiwara-Watatani2005, Nekrashevych2009, LRRW, Kakariadis-Katsoulis, Carlsen-Doron-Eilers}.

Typically, it is difficult to find maps between Cuntz--Pimnser algebras which are induced by maps between their defining \mbox{$C^*$-correspondences}. 
Such maps are important, since for commutative algebras they can be considered ``dynamically compatible'' maps. 
The coisometric morphisms of \cite[Definition~1.3]{BrenkenTopQuiv} induce $*$-homomorphisms between Cuntz--Pimsner algebras, 
but they are often hard to come by. More generally, the covariant correspondences considered in \cite{Meyer-Sehnem,E21} induce \mbox{$C^*$-correspondences} between Cuntz--Pimsner algebras, 
but again these are somewhat rare. 

In our previous work \cite{Grooves}---inspired by moves on graphs \cite{Williams1973, Bates-Pask2004, ERRS2016,MPT}---we developed notions of splittings for \mbox{$C^*$-correspondences} 
and showed how they preserve the isomorphism class or Morita equivalence class of the Cuntz--Pimsner algebras. 
Here, we take a different approach.

Our starting point is the observation that for a self-correspondence ${}_AX_A$, and a sub-self-correspondence ${}_AY_A\subseteq {}_AX_A$,
the inclusion/projection relation does {\em not}, in general, give rise to a covariant morphism of correspondences in either direction.
We do obtain a $*$-homomorphism from the Toeplitz algebra of ${}_A Y_A$ to the Toeplitz algebra of ${}_A X_A$, 
but this $*$-homomorphism does not usually pass to the Cuntz--Pimsner quotients.
Instead, we obtain a completely positive map from the Cuntz--Pimsner algebra of ${}_A X_A$ to the Cuntz--Pimsner algebra of ${}_A Y_A$. 
The KSGNS construction \cite[Chapter 5]{Lance} allows us to pass to an honest $C^*$-correspondence.

A categorical framework was developed in \cite{EKQR-littletaco,EKQR}, in which morphisms are isomorphism classes of \mbox{$C^*$-correspondences}. 
This correspondence category (a.k.a. the Enchilada category) extends the more traditional category of \mbox{$C^*$-algebras} with $*$-homomorphisms. 
The correspondence category was further explored in \cite{Meyer-Sehnem,EKQ, E21}.

To encapsulate both completely positive maps and \mbox{$C^*$-correspondences} in a single object we introduce \emph{positive correspondences} as right $C^*$-modules admitting a left action by a (strict) completely positive map (see \cref{def:pozzy-cozzy}). A similar setup can be found in \cite[Ch. 5]{Lance}.
The isomorphism classes of positive correspondences form a semi-category $\Quesa$ (see \cref{def:semi-category}), and this seems to be the right domain for the KSGNS construction. 
In fact, the semi-category contains $\Ench$ as a wide subcategory, and KSGNS acts as a semi-functor from our semi-category onto $\Ench$, which now appears as a retract of $\Quesa$. 
The only reason why positive correspondences do not form an honest category is a lack of identities:
the obvious identity morphisms act as identities on the right, but not on the left (see \cref{lem:all-fucked-up}).

There is an abundance of examples giving rise to completely positive maps and thereby positive correspondences, and we outline a few in this paper.
We note that this work is different, but related, to the crossed products by completely positive maps considered in \cite{KwasniewskiCP,BrenkenCP}. 

After some general background in \cref{sec:oncemoreforthecheapseats}, we recall the KSGNS construction in detail in \cref{sub:quesadillas}, 
and show how we obtain a semi-category from positive correspondences.
In \cref{sec:projections,sec:bi-curious}, we show how a range of different constructions on \mbox{$C^*$-correspondences} and their Fock modules give rise to positive correspondences. The three types of positive correspondences we consider come from projections
\begin{enumerate}
	\item on self-correspondences themselves,
	\item on Fock modules of self-correspondences, and
	\item on bi-Hilbertian bimodule ``amplifications'' of the Fock modules of self-correspondences.
\end{enumerate}
In each case  we describe the resulting positive maps, and the induced \mbox{$C^*$-correspondences} we obtain from the KSGNS construction. For the second class of examples we also describe their relation to subproduct systems. A feature of the third class of examples is that we can consider pairs of Cuntz--Pimsner algebras associated to self-correspondences with differing coefficient algebras. 


\section{Correspondences, Cuntz--Pimsner algebras, and Enchiladas}
\label{sec:oncemoreforthecheapseats}
In this section we collect definitions and basic results, while establishing notation.
\subsection{\texorpdfstring{$C^*$}{C*}-modules and correspondences}
\label{subsec:mods-coz}

We refer the reader to \cite{Lance,RaeburnWilliams} for background on $C^*$-modules. 
Throughout this paper, $A$ and $B$ denote separable \mbox{$C^*$-algebras}. 
Given a right $C^*$-module over $A$ (a.k.a. a right Hilbert $A$-module or right $A$-module) $X$---written $X_A$ when we want to remember the coefficient
algebra---we denote the \mbox{$C^*$-algebra} of adjointable operators on $X$ by $\End_A(X)$, the $C^*$-ideal of compact operators by $\End^0_A(X)$, and the finite-rank operators by
$\End_A^{00}(X)$. The finite-rank operators are generated
by rank-one operators $\Theta_{x,y}$ with $x,\,y \in X$ satisfying $\Theta_{x,y}(z) = x \cdot\la y \mid z\ra_A$.
If $A_A$ is the trivial $A$-module, then we identify $\End_A(X)$ with the multiplier algebra $\Mult(A)$ of $A$.

A right $C^*$-module $X_A$ is {\em full} if $A = \ol{\linspan}\langle X \mid X \rangle_A$.
We also have occasion to consider left $A$-modules, which behave analogously.

\begin{defn}
  \label{defn:correspondence}
  Let $X_B$ be a countably generated right $C^*$-module, and let $\phi\colon A \to\End_B(X)$ be a $*$-homomorphism. 
  The data $(\phi, {}_AX_B)$ is a \emph{$C^*$-correspondence} (or an $A$--$B$-\emph{correspondence}). 
  If $\phi$ is understood we just write ${}_AX_B$.
  If $\overline{\phi(A)X}=X$, we say ${}_AX_B$ is \emph{nondegenerate}. 
  If $A=B$ we say that $(\phi,{}_AX_A)$ is a {\em \mbox{$C^*$-correspondence} over $A$} or a \emph{self-correspondence}. 
  If $\phi$ is understood we sometimes use the notation $a \cdot x \coloneqq \phi(a)x$ for $a \in A$ and $x \in X_B$. 
  A \mbox{$C^*$-correspondence} is \emph{regular} if $\phi$ is injective and $\phi(A)\subseteq \End^0_B(X)$.
\end{defn}

\begin{defn}
 A regular correspondence $(\phi, {}_AX_B)$ with $X_B$ full and $\End^0_B(X)=\phi(A)$ is called a \emph{Morita equivalence bimodule. }
Morita equivalence bimodules are also left $A$-modules with inner product ${}_A\langle x,y\rangle=\phi^{-1}(\Theta_{x,y})$.
\end{defn}

 We discuss the more general class of bi-Hilbertian bimodules in \cref{sec:bi-curious}.

An important technical and computational tool in the theory and applications
of $C^*$-modules are frames. These are as close as we can get to an orthonormal basis in a $C^*$-module, and serve similar purposes.
\begin{defn}
	\label{defn:frame}
	A (countable) \emph{frame}\footnote{In the signal analysis literature our `frame' would be a normalised tight frame.} for a
	right $A$-module $X$ is a sequence $(u_j)_{j\geq 1}\subset
	X$ such that
	$
	x = \sum_{j \ge 1} u_j \langle u_j \mid x \rangle_A
	$
	for all $x \in X$, with convergence in norm.
\end{defn}
If $(u_j)_{j \ge 1}$ is a frame for $X$, then $X$ is generated as a right $A$-module by
the $u_j$, so $X$ is countably generated. Conversely, every countably generated right $A$-module admits a frame \cite[Corollary~3.3]{RaeburnThompson}.

We may compose \mbox{$C^*$-correspondences} using the internal tensor product. Given correspondences 
$(\phi,{}_AX_B)$ and $(\psi,{}_BY_C)$, we obtain a new correspondence $(\phi\ox{\rm Id}_Y,{}_A(X\ox_BY)_C)$, where $X\ox_B Y$ is the quotient of $X\ox_\C Y$ by the closed submodule generated by
\[
\{xb\ox y-x\ox\psi(b)y:x\in X,\ y\in Y,\ b\in B\},
\] 
\cite[Proposition 4.5]{Lance}. 
It is a non-trivial result that $X\ox_BY$ is a right $C^*$-module with inner product $\langle x_1\ox y_1\mid x_2\ox y_2\rangle_C\coloneqq\langle y_1\mid \psi(\langle x_1\mid x_2\rangle_B)y_2\rangle_C$,
for all $x_1,x_2\in X$ and $y_1,y_2\in Y$.

\begin{defn}[{cf.~\cite[Definition~1.1]{BrenkenTopQuiv}}]
  A \emph{correspondence morphism} 
  $(\pi,\psi)$ from $(\phi_X, {}_AX_A)$ to $(\phi_Y,  {}_BY_B)$ consists of a $*$-homomorphism $\pi \colon A \to B$ together with a linear map $\psi \colon X \to Y$ satisfying:
  \begin{enumerate}[label=(\roman*), ref=(\roman*)]
      \item $\pi(\la\xi \mid \eta\ra_A) = \la\psi(\xi) \mid \psi(\eta)\ra_B$ for all $\xi,\, \eta \in X$;
      \item $\phi_Y(\pi(a)) \psi(\xi) = \psi(\phi_X(a) \xi)$ for all $a \in A$ and $\xi \in X$; and
      \item $\psi(\xi) \cdot \pi(a) = \psi(\xi \cdot a)$ for all $a \in A$ and $\xi \in X$.
  \end{enumerate}
  We write $(\pi,\psi) \colon (\phi_X, {}_AX_A) \to (\phi_Y,  {}_BY_B)$.
  If $(\phi_Y,{}_BY_B) = (\Id,{}_BB_B)$, then we call $(\pi,\psi)$ a \emph{representation} of  $(\phi_X, {}_AX_A)$ in $B$.
\end{defn}

Let $(\phi,{}_AX_A)$ be a \mbox{$C^*$-correspondence} and let
$\ker(\phi)^\perp \coloneqq\{a\in A:ab=0\ \mbox{for all }b\in\ker(\phi)\}$.
Following Katsura \cite{KatsuraCPalg}, the \emph{covariance ideal} of $(\phi,{}_AX_A)$ is given by
\begin{equation}
  J_X \coloneqq \phi^{-1}(\End_A^0(X)) \cap \ker(\phi)^{\perp} \triangleleft A.
\label{eq:cov-ideal}
\end{equation}
 The covariance ideal is the largest ideal of $A$ on which $\phi$ is injective with image contained in $\End_A^0(X)$. 

We set the scene for a restricted class of {\em covariant morphisms} which respect the covariance ideal.
The following well-known characterisation of the compact operators on $X_A$ will be helpful in formulating covariance.  
\begin{ntn}
	\label{ntn:compacts}
	If $X_A$ is a right $A$-module then we let $X^*$ denote the \emph{conjugate module} of $X$, which is a left $A$-module. 
    We identify $\End^0_A(X)$ with $X\ox_AX^*$ as $\End^0_A(X)$-modules via the map $\Theta_{x,y} \mapsto x \ox y^*$. 
    We also give $X\ox_AX^*$ the structure of a \mbox{$C^*$-algebra} inherited from $\End^0_A(X)$.
\end{ntn}

\begin{lemma}[{\cite[Lemma 2.2]{KajPinWat98}}]
	Let $(\pi,\psi) \colon (\phi_A,{}_A Y_A) \to (\phi_B, {}_B X_B)$ be a correspondence morphism. 
    There is an induced $*$-homomorphism $\psi^{(1)} \colon \End^0_A(Y)\to \End^0_B(X)$ satisfying $\psi^{(1)}(y_1 \ox y_2^*)=\psi(y_1) \ox\psi(y_2)^*$, for all $y_1,y_2\in Y$.
\end{lemma}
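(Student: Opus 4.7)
The plan is to define $\psi^{(1)}$ first on the dense $*$-subalgebra $\End_A^{00}(Y)$ of finite-rank operators via the prescribed formula, and then extend by norm-continuity to $\End_A^0(Y)$. Using the identification $\End_A^{00}(Y) \cong Y \odot_A Y^*$ from \cref{ntn:compacts}, I would declare $y_1 \ox y_2^* \mapsto \psi(y_1) \ox \psi(y_2)^*$ on simple tensors, extend linearly, and verify that this respects the $A$-balancing relation (so that it descends to $Y \ox_A Y^*$) and lands inside $X \ox_B X^*$. The key calculation pushes an $a \in A$ across the tensor using property (iii): one finds
\[
\psi(y_1 a) \ox \psi(y_2)^* = \psi(y_1)\pi(a) \ox \psi(y_2)^* = \psi(y_1) \ox \psi(y_2 a^*)^*,
\]
which is precisely the image of $y_1 \ox (y_2 a^*)^*$, so the map is well-defined on $Y \ox_A Y^*$.

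Next I would check that the induced map $\psi^{(1)}_0 \colon \End_A^{00}(Y) \to \End_B^{00}(X)$ is a $*$-algebra homomorphism. Multiplicativity on rank-ones follows from the standard identity $\Theta_{y,z}\Theta_{u,v} = \Theta_{y \cdot \la z \mid u\ra_A, v}$ together with property (i), which rewrites $\pi(\la z \mid u\ra_A) = \la \psi(z) \mid \psi(u)\ra_B$; the involution compatibility $\psi^{(1)}_0(\Theta_{y,z})^* = \psi^{(1)}_0(\Theta_{z,y})$ is immediate from the symmetry of the formula.

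Finally, any $*$-algebra homomorphism whose codomain embeds in a $C^*$-algebra is automatically contractive, because spectra of self-adjoint elements do not grow and the $C^*$-identity lifts this to arbitrary elements via $\|\psi^{(1)}_0(T)\|^2 = \|\psi^{(1)}_0(T^*T)\| \le \|T^*T\| = \|T\|^2$. Consequently $\psi^{(1)}_0$ extends uniquely to a $*$-homomorphism $\psi^{(1)} \colon \End_A^0(Y) \to \End_B^0(X)$ with the prescribed action on rank-ones.

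The step I expect to be the main obstacle is well-definedness in the very first step: the $A$-balancing in $Y \ox_A Y^*$ and the $B$-balancing in $X \ox_B X^*$ are genuinely different relations, and properties (i) and (iii) of a correspondence morphism must cooperate to make the assignment respect both simultaneously. Once that compatibility is secured, both the algebraic and the analytic parts of the argument are routine.
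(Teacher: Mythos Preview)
The paper does not prove this lemma; it merely cites \cite[Lemma~2.2]{KajPinWat98}. Your outline is the standard one, and the multiplicativity, $*$-preservation, and contractivity steps are correct.

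There is, however, a gap precisely where you anticipated one. Checking $A$-balancing alone does not establish well-definedness on $\End_A^{00}(Y)$: the linear surjection from the balanced algebraic tensor $Y \odot_A Y^*$ onto $\End_A^{00}(Y)$ can have nontrivial kernel, so you must show directly that whenever $\sum_i \Theta_{y_i,z_i} = 0$ as an operator on $Y$ one has $S \coloneqq \sum_i \Theta_{\psi(y_i),\psi(z_i)} = 0$ in $\End_B^{00}(X)$. This is where property~(i) does the real work. Using $\Theta_{u,v}\Theta_{p,q} = \Theta_{u\cdot\langle v\mid p\rangle,\,q}$ together with $\langle \psi(y_i)\mid\psi(y_j)\rangle_B = \pi(\langle y_i\mid y_j\rangle_A)$ and property~(iii), one computes
\[
S^*S \;=\; \sum_j \Theta_{\psi\big(\sum_i z_i\cdot\langle y_i\mid y_j\rangle_A\big),\,\psi(z_j)}
\;=\; \sum_j \Theta_{\psi\big((\textstyle\sum_i\Theta_{y_i,z_i})^* y_j\big),\,\psi(z_j)} \;=\; 0,
\]
and hence $S=0$. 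With this in hand your argument is complete. Note also that the contractivity you invoke at the end cannot be used to \emph{deduce} well-definedness, since the spectral-radius bound already presupposes a well-defined $*$-homomorphism on a $*$-algebra; the order of dependencies is well-definedness first, then multiplicativity and $*$-preservation, then automatic contractivity and extension.
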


\begin{defn}[{cf.~\cite[Definition~1.3]{BrenkenTopQuiv}}]
  We say that $(\pi,\psi) \colon (\phi_A,{}_A Y_A) \to (\phi_B, {}_B X_B)$ is \emph{covariant} if for all $a \in J_Y$
  we have $\psi^{(1)} \circ \phi_Y(a) = \phi_X \circ \pi(a)$.
\end{defn}

Correspondence morphisms between \mbox{$C^*$-correspondences} induce $*$-homomorphisms between the associated Toeplitz algebras, 
and covariant morphisms descend to $*$-homomorphisms of the associated Cuntz--Pimsner algebras \cite[Proposition~1.4]{BrenkenTopQuiv}.
We now turn to a description of these algebras.

\subsection{Toeplitz--Pimsner and Cuntz--Pimsner algebras}
\label{subsec:TP-CP}

Fix a \mbox{$C^*$-correspondence} $(\phi,{}_AX_A)$. Define $X^{\otimes 0} \coloneqq {_A}A_A$, $X^{\otimes 1} \coloneqq X$, and $X^{\otimes n+1} \coloneqq
X^{\otimes n} \otimes_A X$ for $n \ge 1$. The \emph{Fock module} of $X$ is the
$\ell^2$-direct sum $\Fock_X \coloneqq \bigoplus_{n=0}^\infty X^{\otimes n}$ regarded as a
correspondence over $A$ with diagonal left action. If $x \in X^{\ox n}$ for some $n \ge 0$ we call $n$ the \emph{length} of $x$ and write $|x| = n$.

As in \cite{Pimsner}, the \emph{Toeplitz algebra} $\cT_X$ is the $C^*$-subalgebra of
$\End_A(\Fock_X)$ generated by the creation operators $T_x$, $x\in X$ which satisfy
$$
T_x(x_1\ox x_2\ox\cdots\ox x_k)\coloneqq x\ox x_1\ox x_2\ox\cdots\ox x_k
$$
on elementary tensors $x_1\ox x_2\ox\cdots\ox x_k \in \Fock_X$. 
The adjoint $T^*_x$ satisfies
\[
T^*_x(x_1 \ox \cdots \ox x_k) = \la x\mid x_1\ra_A\cdot x_2 \ox \cdots \ox x_k
\]
for
$k \ge 1$, and $T^*_x|_{X^{\otimes0}} = 0$. 
For $a \in A$ we let $T_a$ denote the operator of left multiplication by $a$, given on simple tensors by
$T_a(x_1\ox\cdots\ox x_k)=a\cdot x_1\ox\cdots\ox x_k$.

Following \cite{KatsuraCPalg}, with $J_X$ the covariance ideal of $X$, the algebra of compact operators $\End_A^0(\Fock_X\cdot J_X)$ is an ideal of the Toeplitz-Pimsner algebra $\cT_X$. 
The \emph{Cuntz--Pimsner algebra} $\cO_X$ is defined to be the quotient $\cT_X/\End_A^0(\Fock_X\cdot J_X)$. 
Thus, we have an exact sequence
\begin{equation}
0\to \End_A^0(\Fock_X\cdot J_X) \longrightarrow \cT_X \stackrel{q}{\longrightarrow} \cO_X\to 0.
\label{eq:starting-sequence}
\end{equation}

The maps $j_X \colon x \mapsto T_x$ and
$j_A \colon a \mapsto T_a$ constitute a representation of $(\phi,{}_A X_A)$ whose image generates $\cT_X$. This
representation is universal: for any 
representation $(\psi, \pi)\colon (\phi,{}_A X_A) \to B$, there is a unique $*$-homomorphism
$\psi \times \pi \colon\cT_X \to B$ such that $(\psi \times \pi) \circ j_X = \psi$ and
$(\psi \times \pi) \circ j_A = \psi$ (see
\cite[Theorem~3.4]{Pimsner}).

The Cuntz--Pimsner algebra $\cO_X$ is
generated by the covariant representation of $(\phi,{}_A X_A)$ given by
$i_X \colon  x\mapsto S_x\coloneqq q(T_x)$ and $i_A \colon  a\mapsto S_a\coloneqq q(T_a)$. This representation is universal: for every covariant representation $(\psi, \pi) \colon (\phi,{}_A X_A) \to B$ there is a unique $*$-homomorphism
$\psi \times \pi \colon\cO_X \to B$ such that $(\psi \times \pi) \circ i_X = \psi$ and
$(\psi \times \pi) \circ i_A = \psi$.

\subsection{\texorpdfstring{$A$}{A}-\texorpdfstring{$C^*$}{C*}-algebras}

For a locally compact Hausdorff space $X$, the notion of a $C_0(X)$-algebra has been used for many years to handle localisation and limits in homological contexts, e.g. \cite{KasNov}. 
Here, we relax and extend the notion by allowing a noncommutative algebra in place of $C_0(X)$ and we do not assume a central image.
\begin{defn}
 Fix a \mbox{$C^*$-algebra} $A$. 
 A \mbox{$C^*$-algebra} $C$ is an \emph{$A$-$C^*$-algebra} (or simply an \emph{$A$-algebra}) if there is a nondegenerate $*$-homomorphism $\phi \colon A\to \Mult(C)$. 
 If $C$ and $D$ are $A$-algebras with $\phi_C \colon A \to \Mult(C)$ and $\phi_D \colon A \to  \Mult(D)$, then an \emph{$A$-algebra morphism} is 
 a nondegenerate $*$-homomorphism $\alpha \colon C \to D$ such that $\phi_D = \widetilde{\alpha} \circ \phi_C$, 
 where $\widetilde{\alpha}$ is the extension of $\alpha$ to the corresponding multiplier algebras \cite[Proposition 2.1]{Lance}).
\end{defn}

When $A$ is commutative, our notion of an $A$-algebra is more relaxed than the established notion of a $C_0(X)$-algebra.
However, the advantage is that a Cuntz--Pimsner algebra over an $A$-module is an $A$-algebra in our sense, as we show in the next example.

\begin{example} 	
Suppose $(\phi,{}_{A}X_A)$ is a \mbox{$C^*$-correspondence} and let $j_A \colon A \to \cT_X$  be the universal inclusion, which we think of as the diagonal representation of $A$ on $\Fock_X$. 
Since $\End_{J_X}^0(\Fock_X \cdot J_X)$ is an ideal in $\cT_X$, the map $j_A$ induces the structure of an $A$-\mbox{$C^*$-algebra} on $\End_{J_X}^0(\Fock_X \cdot J_X)$. 
In particular, the defining exact sequence for Cuntz--Pimsner algebras,
\begin{equation}	
\begin{tikzcd}
    0 \arrow[r] & \End_{J_X}^0(\Fock_X \cdot J_X) \arrow[r] & \cT_X \arrow[r] & \cO_X \arrow[r] & 0
\end{tikzcd},
\label{eq:def-ext}
\end{equation}
may be interpreted as an exact sequence of $A$-\mbox{$C^*$-algebras}. 
\end{example}

A \mbox{$C^*$-algebra} $C$ being an $A$-algebra with $\phi \colon A \to \Mult(C)$ nondegenerate is equivalent to $(\phi,{}_A C_C)$ being a nondegenerate $C^*$-correspondence.
We record the following facts about the relationship between \mbox{$C^*$-correspondences} and $A$-algebras.
\begin{lemma}
  \label{lem:bimod-algs}
  Let $(\phi,{}_A F_B)$ be a nondegenerate \mbox{$C^*$-correspondence}, and let $C$ be a $B$-algebra. 
  Then the following are true:
  \begin{enumerate}[label=(\roman*), ref=(\roman*)]
      \item $F\ox_BC$ is an
            $\End^0_C(F \ox _B C)$--$C$-Morita equivalence  bimodule;
      \item $(F \ox_B C)^* \cong C {}_B\ox F^*$, where ${}_B\ox$ denotes the tensor product of left correspondences;
      \item $\End^0_C(F \ox _B C) \cong (F \ox_B C) \ox_C  (C {}_B\ox F^*) $ is an $A$-algebra; and
      \item $(F \ox_B C) \ox_C  (C {}_B\ox F^*)$ can be identified with $F\ox C\ox F^*$  via the map 
      \[
      (f_1 \ox c_1) \ox (c_2 \ox f_2^*) \mapsto f_1 \ox c_1 c_2 \ox f_2^*,
      \] 
\end{enumerate}		
with the adjoint satisfying $(f_1\ox c_1\ox f_2^*)^*=f_2\ox c_1^*\ox f_1^*$ and the product satisfying
  \[
      (f_1\ox c_1\ox f_2^*)(f_3\ox c_2\ox f_4^*)
      =f_1\ox c_1\langle f_2|f_3\rangle_Bc_2\ox f_4^*,
  \]
  for all $f_1,f_2,f_3,f_4\in F$ and $c_1,c_2\in C$.
\end{lemma}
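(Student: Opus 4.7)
The plan is to establish the four clauses in sequence. Clauses (i)--(iii) follow from standard facts about internal tensor products and conjugate modules, while (iv) requires a bookkeeping argument that identifies the isomorphism from (iii) with a triple tensor product carrying a natural $C^*$-algebra structure.

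For (i), the internal tensor product $F \ox_B C$ is a right $C^*$-module over $C$ with inner product $\langle f_1 \ox c_1 \mid f_2 \ox c_2\rangle_C = c_1^* \phi_C(\langle f_1 \mid f_2\rangle_B) c_2$ (\cite[Proposition~4.5]{Lance}). Any right $C^*$-module is canonically an imprimitivity bimodule between its algebra of compact operators and the closed ideal generated by its right inner products; fullness over $C$ follows from nondegeneracy of $\phi$ and of $\phi_C$ combined with the span of the $B$-valued inner products of $F$. For (ii), the standard identification $(X\ox_B Y)^* \cong Y^* {}_B\ox X^*$ of the conjugate of an internal tensor product, combined with the natural left--right flip $C_C^* \cong {}_C C$, yields $(F\ox_B C)^* \cong C {}_B\ox F^*$. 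For (iii), the canonical identification $\End^0_C(X) \cong X \ox_C X^*$ applied to $X = F\ox_B C$ and combined with (ii) produces the isomorphism. The $A$-algebra structure arises by diagonally extending $\phi\colon A \to \End_B(F)$ to $F\ox_B C$, yielding a nondegenerate $*$-homomorphism $A \to \End_C(F\ox_B C) = \Mult(\End^0_C(F\ox_B C))$.

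For (iv), the map $\Psi\colon (f_1\ox c_1)\ox(c_2\ox f_2^*) \mapsto f_1\ox c_1c_2\ox f_2^*$ is well-defined because the right $C$-action $(f\ox c)\cdot c' = f\ox cc'$ on $F\ox_B C$ and the left $C$-action $c'\cdot(c\ox f^*) = c'c\ox f^*$ on $C {}_B\ox F^*$ balance over $C$, both producing the common image $f \ox cc'c'' \ox f'^*$ under $\Psi$. Surjectivity onto $F\ox_B C\ox_B F^*$ follows by factoring $c \in C$ via an approximate unit. To verify that $\Psi$ intertwines the $C^*$-algebra operations, I would pull back the product and involution from $\End^0_C(F\ox_B C)$ through the rank-one identification $\Psi((f\ox c)\ox(c'\ox f'^*)) \leftrightarrow \Theta_{f \ox c,\, f' \ox c'^*}$, then expand $\Theta_{x_1,y_1}\Theta_{x_2,y_2} = \Theta_{x_1\langle y_1\mid x_2\rangle_C,\, y_2}$ and $\Theta_{x,y}^* = \Theta_{y,x}$ using the explicit inner product on $F\ox_B C$. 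The factor $\phi_C(\langle f_2\mid f_3\rangle_B)$ appearing in $\langle y_1 \mid x_2\rangle_C$ then slides across the $B$-balanced tensor product to reproduce the stated product formula, and $\Theta_{x,y}^* = \Theta_{y,x}$ similarly produces the stated adjoint.

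The main obstacle is notational: one must carefully track the involutions arising from the identification $C_C^* \cong {}_CC$ so that the involution on the middle factor $c_1 \mapsto c_1^*$ in the adjoint formula $(f_1\ox c_1\ox f_2^*)^* = f_2 \ox c_1^* \ox f_1^*$ is produced correctly (rather than an extra involution appearing on the outer factors). Once this convention is fixed, the remaining verifications are routine.
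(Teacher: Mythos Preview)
The paper states this lemma without proof; it is presented as a record of standard facts about internal tensor products, conjugate modules, and the identification $\End^0_C(X)\cong X\ox_C X^*$, so there is no argument in the paper to compare against. Your outline is the natural one and is essentially correct: parts (i)--(iii) are indeed immediate from the cited identifications, and your plan for (iv)---pulling back the product and involution from $\End^0_C(F\ox_B C)$ via the rank-one operators $\Theta_{f\ox c,\,f'\ox c'^*}$ and using $\Theta_{x_1,y_1}\Theta_{x_2,y_2}=\Theta_{x_1\langle y_1\mid x_2\rangle_C,\,y_2}$---is exactly the right computation. One small caveat: your fullness claim in (i) implicitly assumes that the ideal $\overline{C\langle F\mid F\rangle_B C}$ is all of $C$; the hypotheses as stated give nondegeneracy of the left $A$-action on $F$ and of $B\to\Mult(C)$, but not explicitly fullness of $F_B$, so you should either note this as an implicit hypothesis or phrase (i) as a Morita equivalence with the ideal $\overline{\langle F\ox_B C\mid F\ox_B C\rangle_C}$.
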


\begin{lemma}\label{lem:compacts-faithful}
Let $F_B$ be a right $B$-module and let $C$ be a $B$-algebra.
Suppose $(\phi,{}_{C}X_D)$ is a nondegenerate \mbox{$C^*$-correspondence} with $\phi$ injective. 
Then the action of $\End_C^0(F \ox_B C)$ on $F \ox_B X_D \coloneqq F \ox_B C \ox X_D$, defined on rank-one operators by
  \[
      \Theta_{f_1 \ox c_1, f_2 \ox c_2} (f_3 \ox x) = f_1 \ox \phi(c_1 \langle f_2 \mid f_3\rangle_B c_2^*) x,
  \]
  for all $f_1,f_2,f_3\in F$ and $c_1,c_2\in C$, and $x\in X$, is adjointable and injective.
\end{lemma}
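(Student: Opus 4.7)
The plan is to realise the action as the canonical tensor-product action $T \mapsto T \ox \Id_X$ after identifying $F\ox_B X$ with the balanced tensor product $(F\ox_B C)\ox_C X$. First I would use nondegeneracy of $\phi$ to produce the isomorphism of right $D$-modules
\[
(F\ox_B C)\ox_C X \longrightarrow F\ox_B X,\qquad (f\ox c)\ox x \longmapsto f \ox \phi(c)x,
\]
with surjectivity coming from $\overline{\phi(C)X}=X$ and isometry from the matching of inner products once the middle $C$-factor is pushed through $\phi$. Under this identification, the action of $\End_C^0(F\ox_B C)$ on $F\ox_B X$ is precisely the standard tensoring $T \mapsto T\ox \Id_X$.

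Adjointability is then the familiar fact that $T\ox \Id_X$ is adjointable on a balanced tensor product with adjoint $T^*\ox \Id_X$. To recover the explicit rank-one formula in the statement, I would compute
\[
\Theta_{f_1\ox c_1,\,f_2\ox c_2}(f_3\ox c_3) = (f_1\ox c_1)(c_2^* \la f_2 \mid f_3\ra_B c_3) = f_1 \ox c_1 c_2^* \la f_2 \mid f_3\ra_B c_3,
\]
and then push the $C$-valued middle factor across the tensor product using $\phi$, so that the resulting operator on $F\ox_B X$ acts on $f_3\ox x$ by $f_1\ox \phi(\,\cdot\,)x$ with the stated $C$-valued scalar.

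For injectivity, suppose $T\in \End_C^0(F\ox_B C)$ satisfies $T\ox \Id_X = 0$. The defining inner product on the balanced tensor product gives, for all $e_1,e_2\in F\ox_B C$ and $x_1,x_2\in X$,
\[
\la x_1 \mid \phi(\la e_1 \mid Te_2\ra_C)\, x_2 \ra_D \;=\; \la e_1\ox x_1 \mid (T\ox \Id_X)(e_2\ox x_2)\ra_D \;=\; 0.
\]
Since inner products separate elements of $\End_D(X)$, this forces $\phi(\la e_1 \mid Te_2\ra_C) = 0$ for all $e_1,e_2$; injectivity of $\phi$ then yields $\la e_1 \mid Te_2\ra_C = 0$ for all $e_1, e_2 \in F\ox_B C$, whence $T = 0$.

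The main obstacle is purely bookkeeping: one must verify that the left $B$-action on $F\ox_B X$ agrees with the one inherited from the identification with $(F\ox_B C)\ox_C X$ (this is where nondegeneracy of $\phi$ is used in an essential way), and reconcile the rank-one formula as displayed with the $T\ox\Id_X$ viewpoint. Beyond that, the argument is routine $C^*$-module technology.
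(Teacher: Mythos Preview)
Your proposal is correct and follows essentially the same route as the paper: identify the action with $T\mapsto T\otimes\Id_X$ on $(F\otimes_B C)\otimes_\phi X$, read off adjointability, and deduce injectivity from the injectivity of $\phi$. The only difference is that the paper invokes \cite[Lemma~4.7]{KatsuraCPalg} for the injectivity step, whereas you unpack that lemma's proof directly via the inner-product computation; the content is the same.
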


\begin{proof}
  The left action of $\End_C^0(F \ox_B C)$ on $F \ox_B X_D$ is induced by the map $T \mapsto T \ox \Id_X$ from $\End_C^0(F \ox_B C)$ to $\End_D (F \ox_B C \ox_{\phi} X_D)$, so the action is adjointable.
  Since $\End_C^0(F \ox_B C)$ acts faithfully on $F \ox_B C$ and $\phi$ is injective, \cite[Lemma 4.7]{KatsuraCPalg} implies that $T \mapsto T \ox \Id_X$ is injective.
\end{proof}

\begin{lemma}\label{lem:modulequotients}
Let $F_B$ be a right $B$-module, let $C$ be a $B$-\mbox{$C^*$-algebra,}  and let $I$ be an ideal in $C$. 
Then $I$ is a $B$-\mbox{$C^*$-algebra,}  $\End_I^{0} (F \ox_B I)$ is an ideal in $\End_C^0(F \ox_B C)$, and the quotient by this ideal is isomorphic to $\End_{C/I}^0 (F \ox_B (C/I))$.
\end{lemma}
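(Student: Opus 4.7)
The plan is to realise the right $C$-module $F \ox_B C$ and its submodule $F \ox_B I$ as nested Morita equivalence bimodules via \cref{lem:bimod-algs}(i), and then invoke the Rieffel correspondence between ideals of Morita equivalent $C^*$-algebras.

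First I would verify that $I$ is a $B$-algebra: since $I$ is a closed two-sided ideal of $C$, there is a canonical $*$-homomorphism $\Mult(C) \to \Mult(I)$, and composing this with the nondegenerate structure map $\phi_C \colon B \to \Mult(C)$ yields a $*$-homomorphism $B \to \Mult(I)$. Nondegeneracy is inherited via an approximate identity argument: if $(b_\lambda)$ is an approximate identity for $B$, then $\phi_C(b_\lambda) \to 1$ strictly in $\Mult(C)$, so $\phi_C(b_\lambda) i \to i$ for every $i \in I$.

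Next I would identify $F \ox_B I$ with the closed right $C$-submodule $\overline{(F \ox_B C) \cdot I} \subseteq F \ox_B C$: the inclusion $\supseteq$ is immediate, while $\subseteq$ follows by writing $f \ox i = \lim_\mu (f \ox i) \cdot u^I_\mu$ for an approximate identity $(u^I_\mu)$ of $I$. By \cref{lem:bimod-algs}(i) applied to the $B$-algebra $I$, the subspace $F \ox_B I$ is an $\End_I^0(F \ox_B I)$--$I$ Morita equivalence bimodule, embedded inside the $\End_C^0(F \ox_B C)$--$C$ Morita equivalence bimodule $F \ox_B C$. The Rieffel correspondence then assigns to the ideal $I \triangleleft C$ the ideal of $\End_C^0(F \ox_B C)$ generated by left inner products of elements of $F \ox_B I$; under the identification of \cref{ntn:compacts}, these are precisely the operators $\Theta_{\xi,\eta}$ with $\xi, \eta \in F \ox_B I$, whose closed linear span is $\End_I^0(F \ox_B I)$. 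This settles the ideal claim.

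For the quotient, the Rieffel correspondence furnishes a Morita equivalence between $\End_C^0(F \ox_B C)/\End_I^0(F \ox_B I)$ and $C/I$, implemented by the quotient bimodule $(F \ox_B C)/(F \ox_B I)$. Once this quotient bimodule is identified with $F \ox_B (C/I)$, \cref{lem:bimod-algs}(i) applied to the $B$-algebra $C/I$ forces the quotient $\End_C^0(F \ox_B C)/\End_I^0(F \ox_B I)$ to be isomorphic to $\End_{C/I}^0(F \ox_B (C/I))$, as required. The main technical obstacle I anticipate is establishing the isomorphism $(F \ox_B C)/(F \ox_B I) \cong F \ox_B (C/I)$ of right $C/I$-modules, which amounts to right-exactness of the internal tensor product with $F$ in the coefficient algebra; this should follow by verifying that the natural map $f \ox c \mapsto f \ox [c]$ descends to a well-defined isometric isomorphism on the quotient via the $C/I$-valued inner product, again using an approximate identity of $I$.
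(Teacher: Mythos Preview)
Your argument is correct and arrives at the same endpoint as the paper, but the route differs in one respect worth noting. The paper does not invoke the Rieffel correspondence; instead it cites \cite[Lemma~1.6]{KatsuraCP_Ideals} directly to obtain both that $\End_I^{0}(F \ox_B I)$ is an ideal in $\End_C^0(F \ox_B C)$ and that the quotient is $\End_{C/I}^0\big((F \ox_B C)/(F \ox_B I)\big)$. Your approach recovers these same facts via Morita theory and \cref{lem:bimod-algs}(i), which has the advantage of staying internal to the paper and avoiding an external citation. Both arguments then reduce to the module isomorphism $(F \ox_B C)/(F \ox_B I) \cong F \ox_B (C/I)$, which you correctly identify as the remaining technical step and which the paper handles exactly as you sketch, via the map $f \ox c + F \ox_B I \mapsto f \ox (c+I)$.

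One small caveat: \cref{lem:bimod-algs} is stated for a nondegenerate correspondence $(\phi,{}_A F_B)$, whereas \cref{lem:modulequotients} only assumes a bare right module $F_B$, and the Morita-equivalence conclusion of \cref{lem:bimod-algs}(i) tacitly needs $F \ox_B C$ to be full over $C$. Neither issue is fatal---the left action plays no role in conclusion~(i), and in the non-full case the Rieffel correspondence still operates between $\End_C^0(F \ox_B C)$ and the range ideal $\overline{\langle F \ox_B C \mid F \ox_B C\rangle_C}$, which suffices for your purposes---but you should flag or patch this if you want the argument to stand on its own. Katsura's lemma, by contrast, requires no fullness hypothesis, which is presumably why the paper reaches for it.
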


\begin{proof}
Let $\phi \colon B \to \Mult(C)$ be a nondegenerate $*$-homomorphism and let $(e_i)_i$ be an approximate unit for $I$. 
For each $d \in I$, we have $\phi(b)d = \lim_i \phi(b)d e_i \in I$, so $\phi$ induces a nondegenerate $*$-homomorphism $\phi_I \colon B \to \Mult(I)$. 

Applying~\cite[Lemma 1.6]{KatsuraCP_Ideals}, we see that $\End_I^{0} (F \ox_B I)$ is an ideal of $\End_C^0(F \ox_B C)$ and that the quotient is isomorphic to $\End_{C/I}^0((F \ox_B C)/(F \ox_B I))$, 
where $(F \ox_B C)/(F \ox_B I)$ is the quotient vector space equipped with the structure of a right $C/I$-module.

The algebra $C/I$ inherits a $B$-algebra structure from $\phi \colon B \to \Mult(C)$.
It remains to show that $(F \ox_B C)/(F \ox_B I)$ is isomorphic to $F \ox_B (C/I)$ as right $C/I$-modules. 
A routine computation shows that the map $f \ox c + F \ox_B I \mapsto f \ox (c + I)$ 
extends to a well-defined isometric linear map from $(F \ox_B C)/(F \ox_B I)$ to $F \ox_B (C/I)$. 
Right $C/I$-linearity is routine to check.
\end{proof}

\section{Positive Correspondences and Quesadillas}
\label{sub:quesadillas}

The idea of using some form of correspondences as morphisms has been around for a long time and has appeared in numerous contexts. In the $C^*$-algebraic setting the correspondence category was formalised in \cite{EKQR}. We summarise the key points. 

\begin{defn}\label{defn:correspondence_category}
	The \emph{correspondence category} (or \emph{Enchilada category}) $\Ench$ is the category with objects given by \mbox{$C^*$-algebras} and such that the set of morphisms from a \mbox{$C^*$-algebra} $A$ to a \mbox{$C^*$-algebra} $B$ is the collection of isomorphism classes of nondegenerate $A$--$B$-correspondences. Composition is given by the isomorphism class of the balanced tensor product of any representatives. 
\end{defn}

\begin{rmk}
	In \cref{defn:correspondence_category} one needs to take isomorphism classes of \mbox{$C^*$-correspondences} to ensure associativity of composition. Without taking isomorphism classes one instead ends up with a bicategory (see \cite{Meyer-Sehnem}) as associativity is only defined up to isomorphism.
\end{rmk}

We will now set about extending $\Ench$ by including additional morphisms coming from completely positive maps. 

\subsection{The KSGNS construction}
\label{subsec:KSGNS}
In this section we recall the details of the KSGNS\footnote{Kasparov--Stinespring--Gelfand--Naimark--Segal} construction, and we refer to \cite{Kas-SV} and \cite[Chapter 5]{Lance} who we follow here.

Let $X_A$ and $Y_A$ be right $A$-modules and let $\End_A(X,Y)$ denote the adjointable $A$-linear operators from $X_A$ to $Y_A$. Recall that the \emph{strong-* topology} on $\End_A(X, Y)$ is generated by the seminorms
\[
T\mapsto || Tx ||_Y \quad \textrm{and} \quad T\mapsto || T^*y ||_X,
\]
for all $x\in X$ and $y\in Y$. 
The strong-* topology agrees with the \emph{strict topology} on norm-bounded sets~\cite[Proposition~8.1]{Lance}. 
We only consider these topologies on bounded sets and so only refer to the strict topology. 
A bounded net $(T_i)_{i \in \N}$ converges strictly to $T$ in $\End_A(X, Y)$ precisely when
\[
|| (T_i - T)x ||_Y \to 0 \quad \textrm{and} \quad || (T^*_i - T^*)y ||_X \to 0
\]
for every $x\in X$ and $y\in Y$.  
\begin{rmk}
	If $X_A$ is a right $A$-module, then $(u_j)_{j \ge 1}$ in $X_A$ is a frame if and only if 
	$\sum_{j\geq 1}\Theta_{u_j,u_j}$ converges strictly to ${\Id}_X$ in $\End_A(X)$. 
\end{rmk}

\begin{defn}
\label{defn:nice-pozzies}
If $A$ is a \mbox{$C^*$-algebra} and $X$ is a $B$-module, then a completely positive map $\rho \colon A \to \End_B(X)$ is \emph{strict}
if $(\rho(a_i))_i$ is strictly Cauchy for some approximate unit $(a_i)_i$ for $A$.
Equivalently, $\rho$ is strict if and only if there is a completely positive map $\overline{\rho}\colon \Mult(A) \to \End_B(X)$
which is strictly continuous on the unit ball and whose restriction to $A$ is $\rho$, (cf.~\cite[Corollary 5.7]{Lance}).
A strict map $\rho\colon A\to\End_B(X)$ is {\em nondegenerate} if $\rho(a_i)\to {\rm Id}_X$ strictly.
\end{defn}
\begin{lemma}
	If $\rho \colon A \to B$ and ${\sigma} \colon B \to C$ are strict completely positive maps between \mbox{\mbox{$C^*$-algebras}},
	then ${\sigma}\circ \rho \colon A \to C$ is a strict completely positive map.
\end{lemma}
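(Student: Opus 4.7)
The plan is to use the equivalent characterisation of strictness in \cref{defn:nice-pozzies}: viewing $B$ as the right module $B_B$ (so that $\End_B(B)=\Mult(B)$) and similarly for $C$, the hypothesis gives completely positive extensions $\bar\rho\colon \Mult(A)\to \Mult(B)$ and $\bar\sigma\colon \Mult(B)\to \Mult(C)$ that are strictly continuous on their respective unit balls. The composition $\bar\sigma\circ\bar\rho$ will then serve as the witness of strictness for $\sigma\circ\rho$.

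First I would observe that $\bar\sigma\circ\bar\rho\colon \Mult(A)\to \Mult(C)$ is completely positive (as a composition of completely positive maps) and that its restriction to $A$ equals $\sigma\circ\rho$, since $\bar\rho|_A=\rho$ has image in $B\subseteq\Mult(B)$ and $\bar\sigma|_B=\sigma$. So $\bar\sigma\circ\bar\rho$ is at least a valid candidate extension.

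Next I would verify strict continuity of $\bar\sigma\circ\bar\rho$ on the unit ball. For a bounded net $(m_i)$ in $\Mult(A)$ converging strictly to some $m$, the image net $(\bar\rho(m_i))$ is bounded in $\Mult(B)$, since completely positive maps are automatically norm bounded, and strict continuity of $\bar\rho$ on bounded sets yields $\bar\rho(m_i)\to\bar\rho(m)$ strictly. Applying the analogous property of $\bar\sigma$ to this bounded strictly convergent net then gives $\bar\sigma(\bar\rho(m_i))\to\bar\sigma(\bar\rho(m))$ strictly in $\Mult(C)$. Invoking \cref{defn:nice-pozzies} once more---or equivalently testing on an approximate unit $(a_i)$ of $A$, for which $(\sigma\circ\rho)(a_i)=\bar\sigma(\bar\rho(a_i))$ is then strictly Cauchy---one concludes that $\sigma\circ\rho$ is strict.

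The only genuine subtlety, and the reason strictness rather than mere complete positivity is the right hypothesis, is that the extensions $\bar\rho,\bar\sigma$ are known to be strictly continuous only on norm-bounded sets; one must therefore confirm that norm bounds propagate through the composition. This is immediate from the automatic norm boundedness of completely positive maps, so I do not foresee a serious obstacle.
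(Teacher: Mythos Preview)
Your proof is correct and follows essentially the same approach as the paper: both use the equivalent characterisation of strictness via the existence of a strictly continuous extension to the multiplier algebra, and observe that the composition $\bar\sigma\circ\bar\rho$ serves as the required extension for $\sigma\circ\rho$. Your argument is in fact more detailed than the paper's, which simply asserts that $\bar\sigma\circ\bar\rho$ is a lift without explicitly checking the propagation of strict continuity through bounded sets.
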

\begin{proof}
	Compositions of completely positive maps are completely positive. 
	Since $\rho$ is strict, it lifts to a completely positive map $\ol{\rho} \colon \End_A(A) \to \End_B(B)$ and similarly for $\sigma$.
	Then ${\sigma}\circ \rho$ is strict since $\ol{{\sigma}} \circ \ol{\rho}$ is a lift.
\end{proof}

\begin{example}
If $\phi \colon A \to \End_B(X)$ is a $*$-homomorphism, then $\phi$ is strict if and only if $\ol{\phi(A)X_B}$ is a complemented submodule of $X_B$ \cite[Proposition~5.8]{Lance}. In particular, if $\phi \colon A \to B$ is a surjective $*$-homomorphism, then $\phi$ is strict. 
\end{example}
\begin{example}
If $B \subseteq A$ and $\rho\colon A \to B$ is contractive and idempotent, then $\rho$ is strict. 
Indeed, by Tomiyama's theorem~\cite[Theorem 1.5.10]{Brown-Ozawa}, $\rho$ is a conditional expectation,
so if $(a_i)$ is an approximate unit for $A$, then
\[
    \|\rho(a_i)b - \rho(a_j)b\| = \|\rho(a_ib - a_j b)\| \to 0,
\]
for $b\in B$, so $\rho$ is strict.
\end{example}

We now recall the KSGNS construction. 

\begin{construction}[KSGNS]
Let $A$ and $B$ be \mbox{$C^*$-algebras}, let $X_B$ be a $B$-module, and let $\rho \colon A \to \End_B(X)$ be a strict completely positive map.
The algebraic tensor product $A \odot X$ is a right $B$-module and a semi inner-product space with respect to the sesquilinear form defined by
\begin{equation}
\la a_1 \ox x_1 \mid a_2 \ox x_2 \ra  \coloneqq \la x_1 \mid \rho(a_1^* a_2) x_2 \ra_B,
\end{equation}
for all $a_1, a_2\in A$ and $x_1, x_2\in X_B$.
The \emph{KSGNS correspondence} $A \ox_\rho X$ is the completion of the quotient of $A \odot X$ by the ideal of tensors whose norm (coming from the semi-inner product) vanishes.
The product $A \ox_\rho X$ carries the obvious right $B$-module structure, and there is a left action by a $*$-homomorphism $\pi_\rho\colon A \to \End_B(A \ox_\rho X)$ satisfying
\begin{equation} 
\pi_\rho(a') (a \ox x) = a' a\ox x,
\end{equation}
for all $a', a\in A$ and $x\in X$,
which makes $(\pi_{\rho}, A\otimes_\rho X)$ a nondegenerate $A$--$B$-correspondence.
There is an adjointable map $V_\rho\colon X \to A \ox_\rho X$ satisfying  
\begin{equation} 
V_\rho(x) = \lim_i a_i \ox x, 
\end{equation}
where $x\in X$ and $(a_i)_i$ is an approximate unit for $A$.
Note that $V_\rho^*(a \ox x) = \rho(a) x$ for $a\in A$ and $x\in X$.
Then
\begin{equation} 
\rho(a) = V_\rho^* \pi_\rho(a) V_\rho, 
\end{equation}
for $a\in A$, and $\pi_\rho(A)V_\rho(X) \subseteq A \ox_\rho X$ is dense.

Moreover, if $(\phi,{}_A Y_B)$  is an $A$--$B$-correspondence and there exists an adjointable operator $W \colon X \to Y$ such that
$\rho(a) = W^* \phi(a) W$ for $a\in A$, and $\phi(A) W(X) \subseteq Y$ is dense, then 
\begin{equation} 
(\pi_{\rho},A \otimes_{\rho} X) \cong (\phi,Y )
\label{eq:KSGNS-uniqueness}
\end{equation}
as correspondences.
We refer to this property as the \emph{uniqueness} of the KSGNS construction.

Note that if $\rho$ is nondegenerate, then $V^*_\rho V_\rho(x)=\lim\rho(a_i)x=x$, so $V_\rho$ is an isometry. 
\end{construction}

\begin{example}\label{ex:KSGNS_cond_exp}
  An important case of the KSGNS construction is when $\rho \colon A \to B$ is a conditional expectation. 
  Define a degenerate $B$-valued inner product on $A$ by $\langle a_1 \mid a_2 \rangle_B = \rho(a_1^*a_2)$. 
  After quotienting by vectors of length zero and completing in the induced norm, we arrive at a $B$-module $L^2_B(A,\rho)$. 
  The identity map $\Id_A$ on $A$ induces an adjointable left action on $L^2_B(A,\rho)$ that we denote by $\Id_A^\rho$. 
  Then $(\pi_{\rho}, A \ox_{\rho} B) \cong (\Id_A^\rho, L^2_B(A,\rho))$.
\end{example}

Our aim is to construct a category similar to the Enchilada category of \cite{EKQR},
where instead of $*$-homomorphisms $\phi\colon A\to\End_B(X)$ we consider strict completely positive maps
$\rho\colon A \to\End_B(X)$.

\begin{defn}
  \label{def:pozzy-cozzy}
Let $A$ and $B$ be \mbox{$C^*$-algebras}.
A \emph{positive $A$--$B$-correspondence} is a pair $(\rho, {}_A X_B)$ where $X_B$ is a right $B$-module and $\rho \colon A \to \End_B(X_B)$ is a completely positive map.
The pair $(\rho, {}_A X_B)$ is \emph{strict} if $\rho$ is strict.

Two positive $A$--$B$-correspondences $(\rho, {}_A X_B)$ and $(\sigma, {}_A Y_B)$ are isomorphic 
if there is an adjointable unitary map $U\colon X\to Y$ such that $U\rho(a)U^*={\sigma}(a)$ for all $a\in A$. We note for later use that the coefficients are equal, not just isomorphic.
\end{defn}

Strictness of a positive correspondence ensures that the KSGNS construction yields a nondegenerate \mbox{$C^*$-correspondence} which is unique up to isomorphism~\mbox{\cite[Theorem 5.6]{Lance}}.

\begin{example}
Let $A$ be a \mbox{$C^*$-algebra.}  If $\phi\colon A \to \C$ is a state, then $(\phi, {}_{A} \C_{\C})$ is a strict positive correspondence.  
The KSGNS construction applied to this positive correspondence yields the GNS representation associated to $\phi$.  
\end{example}

\subsection{The Quesadilla semi-category}
\label{subsec:quesa}

The next lemma is key to defining a semi-category using strict completely positive maps. The reason we do not obtain a category is that the ``obvious'' identity elements only behave as identities from one side (see \cref{lem:all-fucked-up} below).

\begin{lemma}
  \label{lem:keyksgnslemma}
  Suppose that $(\rho, {}_A X_B)$ is a strict positive correspondence, and that $(\phi, {}_BY_C)$ is a \mbox{$C^*$-correspondence}.
  There is a strict completely positive map $\rho \ox \Id \colon A \to \End_C(X\ox_B Y)$ satisfying 
  \begin{equation}
    (\rho\ox \Id)(a)(x \ox y) = \rho(a)x \ox y,
    \label{eq:arr-id}
  \end{equation}
  for all $a\in A$, $x\in X$, and $y\in Y$, and an isomorphism of \mbox{$C^*$-correspondences},
  \begin{equation}
      (\pi_{\rho} \ox \Id_Y, (A \ox_\rho X) \ox_B Y )\cong (\pi_{\rho \ox \Id} , A \ox_{\rho \ox \Id} (X \ox_B Y)).
  \label{eq:iso-strict-mod}
  \end{equation}

\end{lemma}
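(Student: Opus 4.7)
The plan is to construct $\rho \otimes \Id$ as the composition $A \xrightarrow{\rho} \End_B(X) \xrightarrow{\,\cdot\, \otimes \Id_Y} \End_C(X \otimes_B Y)$ and then invoke the uniqueness of the KSGNS construction to obtain the correspondence isomorphism~\eqref{eq:iso-strict-mod}.

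First I would observe that the amplification $T \mapsto T \otimes \Id_Y$ is a $*$-homomorphism (in particular completely positive), so $\rho \otimes \Id$ is completely positive and satisfies~\eqref{eq:arr-id} by construction. For strictness, given an approximate unit $(a_i)$ for $A$ with $\rho(a_i)$ strictly Cauchy in $\End_B(X)$, I would verify directly that $\rho(a_i) \otimes \Id_Y$ is strictly Cauchy in $\End_C(X \otimes_B Y)$. On elementary tensors this reduces, via the defining formula for the inner product on the balanced tensor product, to the convergence $\la (\rho(a_i) - \rho(a_j))x \mid (\rho(a_i) - \rho(a_j))x\ra_B \to 0$ in $B$, and uniform boundedness of the net $(\rho(a_i))$ would extend strong convergence from elementary tensors to all of $X \otimes_B Y$. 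The argument for adjoints is parallel, using that adjointability of $\rho(a_i)$ transfers to $\rho(a_i) \otimes \Id_Y$.

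For the isomorphism, I would set $W := V_\rho \otimes \Id_Y \colon X \otimes_B Y \to (A \otimes_\rho X) \otimes_B Y$, which is adjointable because $V_\rho$ is. Using the KSGNS identity $\rho(a) = V_\rho^* \pi_\rho(a) V_\rho$, a direct computation yields $W^*(\pi_\rho(a) \otimes \Id_Y) W = (V_\rho^*\pi_\rho(a)V_\rho) \otimes \Id_Y = (\rho \otimes \Id)(a)$. Density of $\pi_\rho(A) V_\rho(X)$ in $A \otimes_\rho X$ propagates to density of $(\pi_\rho(A) \otimes \Id_Y)\, W(X \otimes_B Y)$ in $(A \otimes_\rho X) \otimes_B Y$, since any elementary tensor $\xi \otimes y$ can be approximated by approximating $\xi$ in the first factor. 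The uniqueness property~\eqref{eq:KSGNS-uniqueness} of KSGNS then identifies the correspondence $(\pi_\rho \otimes \Id_Y, (A \otimes_\rho X) \otimes_B Y)$ with $(\pi_{\rho \otimes \Id}, A \otimes_{\rho \otimes \Id}(X \otimes_B Y))$.

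The main obstacle is the strictness verification, since it is the only step requiring genuine module-theoretic analysis rather than formal manipulation; once strictness is in hand, $W = V_\rho \otimes \Id_Y$ is visibly the right candidate for the KSGNS isometry associated to $\rho \otimes \Id$, and the isomorphism follows essentially formally from the KSGNS uniqueness statement.
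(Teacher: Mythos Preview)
Your proposal is correct and follows essentially the same approach as the paper: define $W = V_\rho \otimes \Id_Y$ and invoke KSGNS uniqueness~\eqref{eq:KSGNS-uniqueness} after verifying the two required conditions. The only difference is in the strictness verification: the paper observes more concisely that $\rho \otimes \Id$ is the composition of the strict map $\rho$ with the nondegenerate $*$-homomorphism $T \mapsto T \otimes \Id_Y$, which immediately gives strictness without the elementary-tensor analysis you sketch.
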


\begin{proof}
Applying the KSGNS construction to $\rho$, we obtain the $A$--$B$-correspondence $A\otimes_\rho X$ with left action $\pi \coloneqq \pi_\rho\colon A \to \End_B(A\otimes_\rho X)$
and an adjointable operator $V \coloneqq V_\rho \colon X \to A\otimes_\rho X$ satisfying
$\rho(a) = V^*\pi(a)V$ and $\ol{\pi(A)V(X)} = A \ox_\rho X$, for all $a\in A$.
Since the $*$-homomorphism $\rho \ox \Id$ is the composition of $\rho$ with the nondegenerate $*$-homomorphism $\End_B(X) \to \End_C (X \ox_B Y)$ given by $T \mapsto T \ox \Id_{Y}$,
it follows that $\rho \ox \Id$ is a strict completely positive map.
So we also obtain the $A$--$C$-correspondence $(\pi_{\rho \ox \Id}, A \otimes_{\rho\otimes \Id}(X\otimes_B Y))$.

In order to prove the isomorphism~\labelcref{eq:iso-strict-mod}, we consider the adjointable operator 
$
V \ox \Id_Y \colon X \ox_B Y \to (A \ox_\rho X) \ox_B Y.
$
Note that $(V \ox \Id_Y)^* = V^* \ox \Id_Y$.
By the uniqueness of the KSGNS construction~\labelcref{eq:KSGNS-uniqueness}, it suffices to show that for all $a \in A$ we have
\begin{equation}
	\label{eq:ksgns_composition_sufficiency}
(\rho\ox\Id) (a) =(V^* \ox \Id_Y)\pi(a)(V \ox \Id_Y) \, \text{ and } \, \ol{\pi(A)(V \ox \Id_Y)(X \ox_B Y)} = (A \ox_\rho X) \ox_B Y.
\end{equation}

Fix an approximate unit $(a_i)_i$ for $A$.
Let $a\in A$, $x\in X$, and $y\in Y$. For the first equality of \labelcref{eq:ksgns_composition_sufficiency} we compute
\begin{align*}
  (V^* \ox \Id_Y)\pi(a)(V \ox \Id_Y) (x \ox y) & = \lim_i (V^* \ox \Id_Y) (aa_i \ox x) \ox y
                                                = V^*(a \ox x) \ox y \\                       
                                                &= \rho(a)x \ox y                               
                                                = (\rho \ox \Id) (x \ox y).
\end{align*}
For the second equality of \labelcref{eq:ksgns_composition_sufficiency} we compute
\[
  \pi(a) (V \ox \Id_Y) (x \ox y) = \lim_i (aa_i \ox x) \ox y =  (a \ox x) \ox y. 
\]
So $\pi(a) (V \ox \Id_Y) (X \ox_B Y)$ is dense in $(A \ox_{\rho} X) \ox_B Y$.
\end{proof}

If $A$ and $B$ are \mbox{$C^*$-algebras} and $(\rho, {}_A X_B)$ is a strict positive $A$--$B$-correpondence,
then we write $(\rho, {}_A X_B)\colon A \to B$ to indicate that we are thinking of it as a mapping, and below as a morphism, from $A$ to $B$.
\cref{lem:keyksgnslemma} allows us to define composition of strict positive correspondences.
Given two strict positive correspondences $(\rho, {}_A X_B) \colon A \to B$ and $(\sigma, {}_B Y_C) \colon B \to C$, we define 
\[
  X \ox_{{\sigma}} Y  \coloneqq X \ox_B (B \ox_{{\sigma}} Y)
\]
so that $(\rho \ox \Id_{B \ox_{{\sigma}} Y}, X \ox_{{\sigma}} Y) \colon A \to C$ is a strict positive correspondence. The KSGNS space $B \ox_{\sigma} Y$ is unique up to isomorphism since ${\sigma}$ is strict, and \labelcref{eq:arr-id} implies that
\[
\rho \ox \Id_{B \ox_{\sigma} Y}(a)(x \ox (b \ox y)) = \rho(a)x \ox (b \ox y)
\]
for all $a\in A$, $b\in B$, $x\in X$ and $y\in Y$.
To simplify notation we write
\[
(\rho, {}_A X_B) \ox (\sigma, {}_B Y_C) \coloneqq (\rho \ox \Id_{B \ox_{{\sigma}} Y}, X \ox_{{\sigma}} Y  ).
\]

The following result gives us associativity of composition of isomorphism classes of positive correspondences. 
\begin{lemma}
Let $(\rho, {}_A X_B) \colon A \to B$, $(\sigma, {}_B Y_C) \colon B \to C$, and $(\tau,{}_C Z_D) \colon C \to D$ be strict positive correspondences. 
Then
\[
\Big( (\rho, {}_A X_B) \ox (\sigma, {}_B Y_C) \Big) \ox (\tau,{}_C Z_D) \cong (\rho, {}_A X_B) \ox \Big(	(\sigma, {}_B Y_C) \ox (\tau,{}_C Z_D)\Big).
\]
\end{lemma}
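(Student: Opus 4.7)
The plan is to unpack both sides of the claimed isomorphism using the definition of the composition $\otimes$, then reduce the resulting statement to a single application of \cref{lem:keyksgnslemma} together with the standard associativity of internal tensor products of $C^*$-correspondences (e.g.\ \cite[Proposition 4.5]{Lance}).

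Unpacking the left-hand side using the definition $(\rho, {}_A X_B) \ox (\sigma, {}_B Y_C) = (\rho \ox \Id,\, X \ox_B (B \ox_\sigma Y))$, we obtain
\[
\Big((\rho, {}_A X_B) \ox (\sigma, {}_B Y_C)\Big) \ox (\tau, {}_C Z_D)
= \Big(\rho \ox \Id \ox \Id,\; \bigl(X \ox_B (B \ox_\sigma Y)\bigr) \ox_C (C \ox_\tau Z)\Big),
\]
while the right-hand side unpacks as
\[
(\rho, {}_A X_B) \ox \Big((\sigma, {}_B Y_C) \ox (\tau, {}_C Z_D)\Big)
= \Big(\rho \ox \Id,\; X \ox_B \bigl(B \ox_{\sigma \ox \Id} (Y \ox_C (C \ox_\tau Z))\bigr)\Big).
\]
On both sides, the left completely positive map sends $a \in A$ to the operator which acts as $\rho(a)$ on the leftmost $X$-factor and as the identity on the remaining tensor factors. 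So it suffices to exhibit a unitary isomorphism of the underlying right $D$-modules which acts as the identity on the $X$-factor.

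For the underlying modules, the plan is first to apply associativity of the internal tensor product of $C^*$-correspondences to rebracket the left-hand side as
\[
\bigl(X \ox_B (B \ox_\sigma Y)\bigr) \ox_C (C \ox_\tau Z)
\;\cong\; X \ox_B \Bigl( (B \ox_\sigma Y) \ox_C (C \ox_\tau Z) \Bigr).
\]
Then I apply \cref{lem:keyksgnslemma} with $\rho$ replaced by $\sigma$, the positive correspondence ${}_A X_B$ replaced by ${}_B Y_C$, and the $C^*$-correspondence ${}_B Y_C$ replaced by $(\tau \text{-induced left action}, {}_C (C \ox_\tau Z)_D)$, to obtain an isomorphism of $B$--$D$-correspondences
\[
(B \ox_\sigma Y) \ox_C (C \ox_\tau Z) \;\cong\; B \ox_{\sigma \ox \Id} \bigl( Y \ox_C (C \ox_\tau Z) \bigr).
\]
Tensoring on the left with $X \ox_B (-)$ and composing with the associativity isomorphism yields a unitary $U$ from the left-hand module to the right-hand module which acts as the identity on the $X$-factor.

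The verification that $U$ intertwines the two left actions is then immediate: on an elementary tensor $x \ox (b \ox y) \ox (c \ox z)$ on the left, both compositions send $a \in A$ to $\rho(a)x$ tensored with the rest. The main obstacle is purely bookkeeping — keeping careful track of how the unitary from \cref{lem:keyksgnslemma} is tensored on the left by $\Id_X \ox_B (-)$ and composed with the associator — but no substantive new analytic content is required beyond what was already established for \cref{lem:keyksgnslemma}.
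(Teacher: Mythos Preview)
Your proposal is correct and follows essentially the same approach as the paper: both unpack the two compositions, invoke \cref{lem:keyksgnslemma} to identify $(B \ox_\sigma Y)\ox_C(C\ox_\tau Z)$ with $B\ox_{\sigma\ox\Id}(Y\ox_C(C\ox_\tau Z))$, and use ordinary associativity of the internal tensor product for the remaining rebracketing. The paper phrases this as ``two applications of \cref{lem:keyksgnslemma}'' (presumably counting the associativity step, or the verification that the left $\rho$-actions match, as a second instance), whereas you count one explicit application plus standard associativity, but the content is the same.
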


\begin{proof}
 The compositions
  \[
      \Big( (\rho, {}_A X_B) \ox (\sigma, {}_B Y_C) \Big) \ox (\tau,{}_C Z_D) 
      = \Big((\rho \ox \Id_{B\ox_{\sigma}Y} )\ox \Id_{C \ox_T Z}, {}_{A} (X \ox_B (B \ox_{\sigma} Y)) \ox_C (C  \ox_T Z)\Big)
  \]
  and
  \[
      (\rho, {}_A X_B) \ox \Big(	(\sigma, {}_B Y_C) \ox (\tau,{}_C Z_D)\Big)  
      = \Big(\rho\ox \Id_{(B \ox_{\sigma} Y) \ox_C ( C \ox_T Z)} ,{}_{A} X \ox_B ( (B \ox_{\sigma} Y) \ox_C ( C \ox_T Z) )\Big)
  \]
  are isomorphic by two applications of \cref{lem:keyksgnslemma}. 
\end{proof}

With associative composition of morphisms, it seems obvious that the next step is to define a positive correspondence category. Sadly, not quite. 

\begin{lemma}
\label{lem:all-fucked-up}
Let $(\rho,{}_AX_B)$ be a strict positive correspondence. Then
\[
(\rho,{}_AX_B)\ox({\rm Id},{}_BB_B)\cong(\rho,{}_AX_B)
\quad
\text{but}
\quad
({\rm Id},{}_AA_A)\ox(\rho,{}_AX_B)\cong (\pi_\rho,A\ox_\rho X).
\]
\end{lemma}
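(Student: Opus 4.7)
Both isomorphisms are handled using the description of composition from \cref{lem:keyksgnslemma} together with the uniqueness clause~\labelcref{eq:KSGNS-uniqueness} of the KSGNS construction.

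\emph{First isomorphism.} By definition, $(\rho,{}_AX_B)\ox(\Id,{}_BB_B)=(\rho\ox\Id_{B\ox_\Id B},\,X\ox_B(B\ox_\Id B))$. My first step is to show $(\pi_\Id, B\ox_\Id B)\cong(\Id,{}_BB_B)$ as $B$--$B$-correspondences. Apply the KSGNS uniqueness statement with $W=\Id_B\colon B\to B$ and $\phi=\Id\colon B\to\End_B(B)=\Mult(B)$, noting that $W^*\phi(b)W=\Id(b)$ and $\phi(B)W(B)=B$ is dense. Tensoring this isomorphism on the left by $X$ over $B$ and composing with the canonical unitary $X\ox_B B\cong X$ produces a unitary $U\colon X\ox_B(B\ox_\Id B)\to X$. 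A check on elementary tensors then confirms $U(\rho\ox\Id_{B\ox_\Id B})(a)U^*=\rho(a)$.

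\emph{Second isomorphism.} Here the composition is $(\Id\ox\Id_{A\ox_\rho X},\,A\ox_A(A\ox_\rho X))$, and by~\labelcref{eq:arr-id} the action $\Id\ox\Id_{A\ox_\rho X}$ is left multiplication on the first tensor factor. The balancing relation in $A\ox_A(A\ox_\rho X)$ is given by the left action $\pi_\rho$ of $A$ on $A\ox_\rho X$, so I define
\[
\Phi\colon A\ox_A(A\ox_\rho X)\to A\ox_\rho X,\qquad \Phi\bigl(a\ox(a'\ox x)\bigr)=aa'\ox x=\pi_\rho(a)(a'\ox x),
\]
on elementary tensors. Well-definedness on the balanced product is immediate from $\pi_\rho(b)(a'\ox x)=ba'\ox x$. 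Isometry follows from a short inner-product computation using the identity $\la a_1\ox\xi_1\mid a_2\ox\xi_2\ra=\la\xi_1\mid\pi_\rho(a_1^*a_2)\xi_2\ra$, which collapses to the defining inner product on $A\ox_\rho X$. The image is dense because for any approximate unit $(e_i)$ for $A$, $\Phi(e_i\ox(a'\ox x))=e_ia'\ox x\to a'\ox x$. Hence $\Phi$ extends to a unitary, and transporting the action along $\Phi$ yields $\Phi(\Id\ox\Id)(b)\Phi^{*}=\pi_\rho(b)$ for all $b\in A$, as required.

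\emph{Where the subtlety lies.} No step is deep, but the decisive observation is that the balanced tensor product over $A$ forces the left action $\pi_\rho$ on the second factor into the picture, so composing with the ``obvious'' left identity replaces $\rho$ by the genuine $*$-homomorphism $\pi_\rho$ produced by KSGNS. This asymmetry is precisely what prevents strict positive correspondences from assembling into a category.
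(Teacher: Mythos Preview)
The paper states this lemma without proof, treating both isomorphisms as immediate consequences of the definition of composition and standard module identifications. Your argument is correct and supplies exactly the details the paper leaves implicit: the first isomorphism reduces to $B\ox_{\Id}B\cong B$ (which you verify via KSGNS uniqueness, though the standard $C^*$-correspondence identity would also do) followed by $X\ox_B B\cong X$; the second reduces to the canonical isomorphism $A\ox_A Y\cong Y$ for a nondegenerate left $A$-module $Y$, applied with $Y=A\ox_\rho X$ and left action $\pi_\rho$. Your closing observation about why $\pi_\rho$ replaces $\rho$ is precisely the point the paper wants the reader to take away.
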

\cref{lem:all-fucked-up} says that typically we do not have identity morphisms, though we do always have right identities. 
The usual names for a `category' without identities is a \emph{semi-category} or a \emph{semi-groupoid} \cite[Appendix B]{Tilson}. 
Though, having right identities gives us more structure than a general semi-category.
For instance, we can still work just with morphisms by identifying objects with their right identities, though we will not explore the ramifications of the semi-category structure here.

Following the Tex-Mex precedent of \cite{EKQR} we introduce the following.
\begin{defn}
  \label{def:semi-category}
The \emph{positive correspondence semi-category} (or the \emph{Quesadilla semi-category}) $\Quesa$ is defined as follows:
  \begin{enumerate}
    \item Objects of $\Quesa$ are \mbox{$C^*$-algebras}.
      \item Morphisms from $A$ to $B$ are isomorphism classes of strict positive $A$--$B$-correspondences.
      \item Given morphisms $(\rho, {}_A X_B) \colon A \to B$ and $(\sigma, {}_B Y_C) \colon B \to C$, composition is defined by
            \[
              (\rho, {}_A X_B) \ox (\sigma, {}_B Y_C) \coloneqq (\rho \ox \Id_{B \ox_{{\sigma}} Y},{}_{A} X \ox_{{\sigma}} Y_C  )\colon A \to C.
            \]

  \end{enumerate}
\end{defn}
\begin{rmk}
  Using positive correspondences as morphisms instead of their isomorphism classes, we could perhaps define 2-morphisms to obtain a bi-semi-category analogous to the $C^*$-correspondence bi-category of \cite{Meyer-Sehnem}. 
\end{rmk}


\begin{example}
  If $\rho \colon A \to B$ is a strict completely positive map, then the isomorphism class of $(\rho,{}_A B_B)$ is a morphism in $\Quesa$ from $A$ to $B$.
  If $\rho$ is a $*$-homomorphism then $(\rho,{}_A B_B)$ is the isomorphism class of the \mbox{$C^*$-correspondence} associated with $\rho$.
\end{example}

\begin{example}
	Let $\rho \colon A \to B$ be a conditional expectation with nondegenerate inclusion $\iota \colon B \to A$.
    We consider the positive correspondence $(\rho,{}_AB_B)$ and the \mbox{$C^*$-correspondence} $(\iota,{}_BA_A)$. 
	There are two possible compositions,
	\[
	(\rho\ox{\rm Id},{}_AB \ox_{\iota } A_A) 
	\cong (\iota\circ\rho,{}_AA_A)
	\quad\mbox{and}\quad (\iota\ox{\rm Id}_B,{}_BA\ox_{\rho}B_B) \cong (\iota,{}_BL^2_B(A,\rho)).
	\]
	The latter correspondence is that of \cref{ex:KSGNS_cond_exp}, with the left action restricted to $B$. 
	As $\iota\circ \rho$ is always an adjointable projection on $L^2_B(A,\rho)$, 
	there is a copy of the identity correspondence $({\rm Id}_B,{}_BB_B)$ contained as a complemented sub-correspondence in $(\iota,{}_BL^2_B(A,\rho))$.
\end{example}

The positive correspondence semi-category seems to be the right domain of the KSGNS construction. Since every \mbox{$C^*$-correspondence} is in particular a strict positive correspondence, $\Ench$ is a wide subcategory of $\Quesa$. 
Further, we may interpret the proposition below as saying that $\Ench$ is a retract of $\Quesa$.

\begin{prop}
  \label{prop:ksgnsfunctor}
  The $\KSGNS$ construction provides a semi-functor\footnote{i.e. preserves composition.}
  \[
      \KSGNS \colon \Quesa \to \Ench 
  \]
  such that:
  \begin{enumerate}
    \item $\KSGNS(A) = A$ for every \mbox{$C^*$-algebra} $A$; and
    \item for a positive correspondence $(\rho, {}_A X_B)$, $\KSGNS(\rho, {}_A X_B)$ is the class of the correspondence $(\pi_\rho,{}_A(A \ox_\rho X)_B)$. In particular, if $\rho$ is a nondegenerate $*$-homomorphism, then $\KSGNS(\rho, {}_A X_B)=(\rho, {}_A X_B)$.
  \end{enumerate}
 If $U \colon \Ench \to \Quesa$ is the forgetful semi-functor given by considering nondegenerate $*$-homomorphims as strict completely positive maps, then
  \[
  \KSGNS{} \circ U = \Id_\Ench 
  \quad \text{and} \quad
  U \circ \KSGNS{}~\text{is idempotent}.
  \]
\end{prop}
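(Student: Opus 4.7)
The plan is to verify the statements in turn, leaning mainly on \cref{lem:keyksgnslemma} and the uniqueness property~\labelcref{eq:KSGNS-uniqueness} of the KSGNS construction.

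First I would check that $\KSGNS$ is well defined on isomorphism classes of strict positive correspondences, so that item~(ii) is meaningful. If $(\rho, {}_A X_B) \cong (\sigma, {}_A Y_B)$ via an adjointable unitary $U \colon X \to Y$ with $\sigma = U \rho(\cdot) U^*$, then $\Id_A \odot U$ descends to a unitary $A \ox_\rho X \to A \ox_{\sigma} Y$ that intertwines $\pi_\rho$ and $\pi_{\sigma}$, giving an isomorphism of $C^*$-correspondences. Setting $\KSGNS(A) = A$ on objects gives item~(i), and item~(ii) then records the definition of the KSGNS construction. The special case of a nondegenerate $*$-homomorphism $\phi$ follows from applying \labelcref{eq:KSGNS-uniqueness} with $W = \Id_X$: $\phi(a) = \Id_X^{*} \phi(a) \Id_X$ and $\phi(A) X$ is dense in $X$ by nondegeneracy, so $(\pi_\phi, A \ox_\phi X) \cong (\phi, {}_A X_B)$.

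Next I would verify the semi-functor property. Given $(\rho, {}_A X_B)$ and $(\sigma, {}_B Y_C)$ in $\Quesa$, \cref{def:semi-category} gives their composition as $(\rho \ox \Id_{B \ox_{\sigma} Y}, X \ox_B (B \ox_{\sigma} Y))$, whose $\KSGNS$-image is
\[
A \ox_{\rho \ox \Id}\bigl(X \ox_B (B \ox_{\sigma} Y)\bigr).
\]
The composition in $\Ench$ of the individual $\KSGNS$-images is the balanced tensor product $(A \ox_\rho X) \ox_B (B \ox_{\sigma} Y)$. Applying \cref{lem:keyksgnslemma} to $(\rho, {}_A X_B)$ together with the \mbox{$C^*$-correspondence} $(\pi_{\sigma}, {}_B(B \ox_{\sigma} Y)_C)$ produces a canonical isomorphism between these two correspondences, so $\KSGNS$ preserves composition.

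For the retract statements, the case of $\rho$ a nondegenerate $*$-homomorphism from item~(ii) yields $\KSGNS \circ U = \Id_\Ench$: on objects both semi-functors act as the identity, and on a class represented by a nondegenerate $C^*$-correspondence $(\phi, {}_A X_B)$, $U$ records the same data as a strict positive correspondence and then $\KSGNS(\phi, X) \cong (\phi,X)$ by the special case. Consequently,
\[
(U \circ \KSGNS) \circ (U \circ \KSGNS) = U \circ (\KSGNS \circ U) \circ \KSGNS = U \circ \KSGNS,
\]
so $U \circ \KSGNS$ is idempotent.

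The main obstacle is keeping track of the identifications between $A \ox_{\rho \ox \Id}(X \ox_B Z)$ and $(A \ox_\rho X)\ox_B Z$ in the composition step; once \cref{lem:keyksgnslemma} is available, every remaining verification is a short application of the uniqueness of KSGNS.
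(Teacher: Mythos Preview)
Your proof is correct and follows essentially the same approach as the paper's: the preservation of composition is deduced from \cref{lem:keyksgnslemma}, the nondegenerate $*$-homomorphism case from the uniqueness \labelcref{eq:KSGNS-uniqueness}, and the retract statements follow formally. You are simply more explicit than the paper, which dispatches well-definedness and the final assertions as ``clear''.
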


\begin{proof}
It is clear from our construction that morphisms in $\Quesa$ are sent to morphisms in $\Ench$,
and the fact that the $\KSGNS$ respects composition is the content of~\cref{lem:keyksgnslemma}.
If $(\rho, {}_A X_B)$ is a strict positive correspondence and $\rho$ is a nondegenerate $*$-homomorphism, then there is an isomorphism $(\pi_\rho,{}_A(A \ox_\rho X)_B) \cong (\rho, {}_A X_B)$ of \mbox{$C^*$-correspondences},
so $\KSGNS$ acts as the identity. Since morphisms in $\Ench$ and $\Quesa$ are isomorphism classes, the final assertions are now clear.
\end{proof}

In a general semi-category we cannot talk about invertible morphisms, as we lack identities. 
In $\Quesa$, we may talk about invertible morphisms since we have right identities.

\begin{defn}
A morphism $(\rho,{}_AX_B)$ in $\Quesa$ is \emph{invertible} if there exists a morphism $(\sigma,{}_BY_A)$ such that
\[
(\rho,{}_AX_B)\ox(\sigma,{}_BY_A)=({\rm Id},{}_AA_A)  \quad\mbox{and}\quad (\sigma,{}_BY_A)\ox(\rho,{}_AX_B)=({\rm Id},{}_BB_B).
\]
Here, $({\rm Id},{}_AA_A)$ and $({\rm Id},{}_BB_B)$ are the right identity morphisms for $A$ and $B$.
\end{defn}

The lemma below should be compared to \cite[Lemma 2.4]{EKQR} which says that the invertible morphisms in $\Ench$ are the Morita equivalence bimodules.
\begin{lemma}
The invertible morphisms in $\Quesa$ are exactly the Morita equivalence bimodules. 
\end{lemma}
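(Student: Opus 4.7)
The plan is to handle the two directions separately, with the substance in the forward direction, which we transport into $\Ench$ via the $\KSGNS$ semi-functor of \cref{prop:ksgnsfunctor} and reduce to \cite[Lemma~2.4]{EKQR}. The backward direction is routine: for a Morita equivalence bimodule $(\phi, {}_AX_B)$, both $\phi$ and the conjugate left action on ${}_BX^*_A$ are nondegenerate $*$-homomorphisms, so the KSGNS modules coincide with $X$ and $X^*$ and the $\Quesa$-compositions reduce to the $\Ench$-compositions, where invertibility is supplied by \cite[Lemma~2.4]{EKQR}.

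For the forward direction, take $(\rho, {}_AX_B)$ invertible with inverse $(\sigma, {}_BY_A)$ in $\Quesa$. First, apply $\KSGNS$ to the invertibility identities; since $\KSGNS$ fixes $(\Id, A)$ and $(\Id, B)$, we obtain that $(\pi_\rho, A \ox_\rho X)$ and $(\pi_\sigma, B \ox_\sigma Y)$ are mutually inverse in $\Ench$, hence Morita equivalence bimodules by \cite[Lemma~2.4]{EKQR}; in particular the left action $\pi_\sigma \colon B \to \End^0_A(B \ox_\sigma Y)$ is an injective $*$-homomorphism. Second, show that $\rho$ itself is a nondegenerate $*$-homomorphism. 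Setting $Z = B \ox_\sigma Y$, the isomorphism $(\rho, X) \ox (\sigma, Y) \cong (\Id, A)$ makes $\rho \ox \Id_Z$ a nondegenerate $*$-homomorphism via unitary conjugation, and a \emph{disentangling} argument---noting that for any $T \in \End_B(X)$ the vanishing $T \ox \Id_Z = 0$ on $X \ox_B Z$ unfolds through the internal tensor product inner product into $\pi_\sigma(\langle Tx \mid Tx\rangle_B) = 0$, which by injectivity of $\pi_\sigma$ forces $T = 0$---pulls multiplicativity (with $T = \rho(ab) - \rho(a)\rho(b)$) and nondegeneracy (with $T = \overline{\rho}(1) - \Id_X$, after passing to strict extensions and using that $T \mapsto T \ox \Id_Z$ is strictly continuous on bounded sets) back to $\rho$; self-adjointness is automatic for CP maps. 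Third, the KSGNS map $V_\rho$ is then a unitary intertwiner, so $(\rho, X) \cong (\pi_\rho, A \ox_\rho X)$ as positive correspondences; the latter is a Morita equivalence bimodule, so the former is too.

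The principal obstacle is this disentangling step: invertibility only directly controls $\rho$ after amplification by $\Id_Z$, so descending the structural information to $\rho$ itself requires a ``faithful witness'' in the $Z$-factor. This is supplied precisely by the injectivity of $\pi_\sigma$ delivered by the $\Ench$-theoretic step.
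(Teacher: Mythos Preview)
Your proof is correct but takes a genuinely different route from the paper's. The paper exploits the very asymmetry of the right-identities recorded in \cref{lem:all-fucked-up} to obtain the isomorphism $(\rho,{}_AX_B)\cong(\pi_\rho,A\ox_\rho X)$ in one stroke via the chain
\[
(\rho,X)\;\cong\;(\rho,X)\ox(\Id,B)\;\cong\;(\rho,X)\ox(\sigma,Y)\ox(\rho,X)\;\cong\;(\Id,A)\ox(\rho,X)\;\cong\;(\pi_\rho,A\ox_\rho X),
\]
so that $\rho$ is immediately seen to be unitarily conjugate to the $*$-homomorphism $\pi_\rho$; the appeal to \cite[Lemma~2.4]{EKQR} then happens directly in $\Ench$ since both $(\rho,X)$ and $(\sigma,Y)$ are already $C^*$-correspondences. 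By contrast, you first push the invertibility through the $\KSGNS$ semi-functor to conclude that $(\pi_\rho,A\ox_\rho X)$ and $(\pi_\sigma,B\ox_\sigma Y)$ are Morita, and then use the injectivity of $\pi_\sigma$ as a faithful witness to descend the multiplicativity and nondegeneracy of the amplified map $\rho\ox\Id_Z$ back to $\rho$ itself (this descent is essentially \cite[Lemma~4.7]{KatsuraCPalg}). The paper's argument is shorter and more conceptual, turning the apparent defect of $\Quesa$---the failure of $(\Id,A)$ to act as a left identity---into the mechanism of the proof. Your argument, while longer, has the virtue of being entirely mechanical once the functoriality of $\KSGNS$ is in place, and makes the role of \cite[Lemma~2.4]{EKQR} more transparent by invoking it at the outset rather than at the end.
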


\begin{proof}
Morita equivalence bimodules are invertible with inverse given by the conjugate module.
Conversely, suppose that $(\rho, {}_A X_B)\colon A \to B$ and $(\sigma, {}_BY_A) \colon B \to A$ represent mutually inverse morphisms in $\Quesa$. 
Then
\begin{align*}
  (\rho, {}_A X_B) 
  &\cong (\rho, {}_A X_B) \ox (\Id_B,{}_B B_B)
  \cong 
  (\rho, {}_A X_B) \ox (\sigma, {}_B Y_A) \ox (\rho, {}_A X_B)\\
  &\cong (\Id_A, {}_A A_A) \ox  (\rho, {}_A X_B)
  \cong(\pi_\rho,A \ox_\rho X).
\end{align*}
Uniqueness of the KSGNS construction implies that $\rho$ must be a $*$-homomorphism so $(\rho, {}_A X_B)$ is a \mbox{$C^*$-correspondence}.
A similar argument shows that $({\sigma},{}_BY_A)$ is also a $C^*$-correspondence.
By~\cite[Lemma 2.4]{EKQR}, the invertible morphisms in $\Ench$ are imprimitivity bimodules, so $(\rho, {}_A X_B)$ is an imprimitivity bimodule,
and the result follows.
\end{proof}

Under certain conditions, the composition in $\Quesa$ simplifies significantly. 

\begin{lemma}\label{lem:compose-expectation}
Let $\rho \colon A \to B$ and ${\sigma} \colon B \to C$ be strict completely positive maps between \mbox{$C^*$-algebras} and assume that $\rho$ is a conditional expectation.
There is an inclusion of $C$-modules 
\[
  \psi \colon (A \ox_\rho B) \ox_B (B \ox_{\sigma} C) \to A \ox_{{\sigma}\circ \rho} C.
\]
satisfying $\psi\big((a \ox b_1) \ox (b_2 \ox c)\big) = ab_1 b_2 \ox c$, for all $a\in A$, $b_1, b_2\in B$, and $c\in C$.
If $B$ contains an approximate unit for $A$, then $\psi$ is an isomorphism. 
\end{lemma}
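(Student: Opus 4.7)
My plan is to first define $\psi$ on the algebraic balanced tensor product and extend by continuity, so the work is to (i) check that the formula $\psi((a\ox b_1)\ox(b_2\ox c)) = ab_1b_2\ox c$ is compatible with the balancing relation, (ii) verify it preserves the $C$-valued inner product, and (iii) argue density of the image under the additional hypothesis.

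Well-definedness on the algebraic balanced tensor product is immediate: for $b\in B$, both $\psi((a\ox b_1b)\ox (b_2\ox c))$ and $\psi((a\ox b_1)\ox(bb_2\ox c))$ equal $ab_1bb_2\ox c$, so $\psi$ descends to $(A\odot B)\ox_B(B\odot C)$ and its completion will make sense once we have the inner product estimate.

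The key step is the inner product computation. Unpacking the KSGNS definitions, the left-hand inner product is
\enveqn{
  \la (a\ox b_1)\ox(b_2\ox c)\mid (a'\ox b_1')\ox(b_2'\ox c')\ra_C
  = c^*\, \sigma\bigl(b_2^* b_1^* \rho(a^*a')\, b_1' b_2'\bigr)\, c',
}
while the right-hand side equals
\enveqn{
  \la ab_1b_2\ox c\mid a'b_1'b_2'\ox c'\ra_C
  = c^*\, \sigma\bigl(\rho(b_2^*b_1^* a^*a' b_1'b_2')\bigr)\, c'.
}
The crucial ingredient is the bimodule property of the conditional expectation $\rho$, namely $\rho(bxb')=b\rho(x)b'$ for $b,b'\in B$ and $x\in A$ (Tomiyama's theorem). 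Applying this to the right-hand side with $b=b_2^*b_1^*$ and $b'=b_1'b_2'$ gives exactly the left-hand side, and the identity holds on elementary tensors and hence on all algebraic combinations. This shows $\psi$ is isometric on the algebraic tensor product; by density it extends to a well-defined isometric map on the completion. Right $C$-linearity is clear from the formula, so $\psi$ is an inclusion of right $C$-modules, as claimed.

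For the final statement, isometry reduces surjectivity to showing that $\psi$ has dense image. Every element of the form $ab\ox c$ with $b\in B$ lies in the image (take $b_1=b$ and approximate $b_2$ by an approximate unit for $B\ox_\sigma C$, or directly note $\psi((ae\ox b)\ox (e'\ox c))\to ab\ox c$ after suitable limits). If $(e_i)\subset B$ is an approximate unit for $A$, then $\|ae_i-a\|\to 0$, so
\envaln{
\|\psi((a\ox e_i)\ox(e_i\ox c)) - a\ox c\|^2
&= \|c^*(\sigma\circ\rho)((ae_i^2 -a)^*(ae_i^2-a))c\|\\
&\leq \|\sigma\circ\rho\|\,\|c\|^2\,\|ae_i^2-a\|^2,
}
which tends to zero since $(e_i^2)$ is again an approximate unit for $A$ inside $B$ (or, more directly, a standard diagonal argument using $ae_ie_j\to a$). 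Thus $\{a\ox c:a\in A,\, c\in C\}$ lies in the closed image of $\psi$, and since these vectors span a dense subspace of $A\ox_{\sigma\circ\rho}C$, we conclude $\psi$ is surjective, hence an isomorphism of right $C$-modules.

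The main obstacle is essentially notational rather than conceptual: one must keep the roles of $\rho$ (conditional expectation) and $\sigma$ (arbitrary strict c.p.\ map) straight and apply the bimodule identity at precisely the right juncture. Once that identity is in place, isometry and the density argument are routine.
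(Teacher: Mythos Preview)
Your proof is correct and follows essentially the same approach as the paper: both verify that $\psi$ is isometric by computing the $C$-valued inner product and invoking the bimodule property $\rho(b x b')=b\rho(x)b'$ of the conditional expectation, and both prove surjectivity via an approximate unit for $A$ lying in $B$. The only cosmetic differences are that you check the full polarised inner product (the paper checks only the diagonal, which suffices for isometry) and that you use $(e_i,e_i)$ giving $ae_i^2\ox c$, whereas the paper splits as $(b_i^{1/2},b_i^{1/2})$ to land directly on $ab_i\ox c$; your observation that $(e_i^2)$ is again an approximate unit is correct and makes this harmless.
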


\begin{proof}
Fix $a \in A$, $b_1,b_2 \in B$, and $c \in C$, and observe that
\begin{align*}
  \la{(a \ox b_1) \ox (b_2 \ox c) \mid (a \ox b_1) \ox (b_2 \ox c)}\ra_C   
  & = c^*{\sigma}(b_2^*b_1^*\rho(a^*a)b_1b_2)c                        
   =  c^*{\sigma}\circ \rho((a b_1 b_2)^* a b_1 b_2)c      \\
  & = \la ab_1b_2 \ox c \mid ab_1b_2 \ox c\ra_C,
\end{align*}
where the first inner product is on $(A \ox_\rho B) \ox_B (B \ox_{\sigma} C)$ and the second is on $A \ox_{{\sigma}\circ \rho} C$.
It follows that there is a well-defined isometric linear map 
$\psi \colon (A \ox_\rho B) \ox_B (B \ox_{\sigma} C)\to A \ox_{{\sigma}\circ \rho} C$ satisfying
$
\psi((a \ox b_1) \ox (b_2 \ox c)) = ab_1b_2 \ox c.$
It is straightforward to see that $\psi$ respects the left action of $A$ and right action of $C$.

For the second statement, choose an approximate unit $(b_i)_i$ for $A$ contained in $B$.
Then for fixed $a\in A$ and $c\in C$, 
\[
  \lim_i \psi\big((a \ox b_i^{1/2}) \ox (b_i^{1/2} \ox c)\big) = \lim_i a b_i \ox c = a \ox c,
\]
and it follows that $\psi$ is surjective.
\end{proof}

\begin{example}
We give two examples of expectations arising from the Cuntz--Pimsner algebra of a \mbox{$C^*$-correspondence} $(\phi_X,{}_AX_A)$. 
Let $\gamma \colon \bT \to \Aut(\cO_X)$ denote the gauge action and let $\cO_X^{\gamma}$ denote the associated fixed-point algebra. 
There is a conditional expectation $\Phi \colon \cO_X\to\cO_X^\gamma$ given by averaging over $\gamma$. 
Applying the $\KSGNS$ construction yields the correspondence $(\pi_\Phi,{}_{\cO_X}L^2_{\cO_X^\gamma}(\cO_X,\Phi))$ underlying the 
Kasparov module encoding the gauge action, \cite{PR,CNNR}. 

If $X$ is a bi-Hilbertian bimodule of finite right Watatani index (see \cref{sec:bi-curious}), 
there is another expectation
$\Phi_\infty:\cO_X\to A$. Applying the $\KSGNS$ functor yields the correspondence $(\pi_{\Phi_\infty}, {}_{\cO_X}\Xi_A)$ underlying the Kasparov module representing the class of the defining extension	\labelcref{eq:def-ext}, \cite{RRSext,GMR}.  We note that both $A$ and $\cO_X^\gamma$ contain an approximate unit for $\cO_X$.

Given a densely-defined norm lower semi-continuous trace $\tau$ on $\cO_X^\gamma$ or on $A$, we can then compose with the GNS representations $(\pi_\tau,{}_{\cO_X^\gamma}L^2(\cO_X^\gamma,\tau))$ or $(\pi_\tau,{}_AL^2(A,\tau))$ using \cref{lem:compose-expectation}, and apply the KSGNS construction to obtain 
\[
(\pi_{\tau\circ\Phi},{}_{\cO_X}L^2(\cO_X,\tau\circ\Phi))\quad\mbox{or}\quad
(\pi_{\tau\circ\Phi_\infty},{}_AL^2(A,\tau\circ\Phi_\infty)).
\]	
The process of identifying KMS states of quasi-free real actions on $\cO_X$ outlined by Laca and Neshveyev is compatible with these decompositions, \cite{LN04,GRU,RRSext}.
\end{example}

To understand the composition of morphisms in $\Quesa$ to and from Cuntz--Pimsner algebras, we need to understand how composition in $\Quesa$ interacts with quotient maps. 
Recall that for a quotient map $q \colon A \to A/I$ with linear splitting $s \colon A/I \to A$ we have $s(ab) - s(a)s(b) \in I$ and $s(a^*) - s(a)^* \in I$,
for all $a,b\in A/I$.

\begin{lemma}
\label{lem:quotient-postitive-map}
Suppose that we have a commuting diagram of \mbox{$C^*$-algebras}
\[
    \begin{tikzcd}
        0 \arrow[r] & I \arrow[r] \arrow[d, "\rho|_I"]  & A \arrow[d, "\rho"] \arrow[r,"q_A"] & A/I \arrow[r]  & 0 \\
        0 \arrow[r] & J \arrow[r]           & B \arrow[r, "q_B"]            & B/J \arrow[r]                   & 0
    \end{tikzcd}
\]
 with exact rows of $*$-homomorphisms. Suppose that the top row has a completely positive splitting, and that $\rho$ is a strict completely positive  map satisfying $\rho(I)\subseteq J$. Then the map $\wt{\rho} \colon A/I \to B/J$ defined by $\wt{\rho}(a+I)=\rho(a)+J$ is completely positive and strict. 
\end{lemma}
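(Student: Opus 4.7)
The plan is a direct diagram chase in three small steps. First I would check well-definedness of $\wt\rho$: if $a-a'\in I$ then $\rho(a)-\rho(a')\in J$ by the hypothesis $\rho(I)\subseteq J$, so the formula $\wt\rho(a+I)\coloneqq\rho(a)+J$ does not depend on the choice of representative.

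For complete positivity, I would invoke the hypothesis that the top row admits a completely positive splitting $s\colon A/I\to A$ of $q_A$. Since $s(q_A(a))-a\in I$ for every $a\in A$, the assumption $\rho(I)\subseteq J$ gives $\rho(s(q_A(a)))-\rho(a)\in J$, whence $\wt\rho = q_B\circ\rho\circ s$. Each of $s$, $\rho$ and $q_B$ is completely positive, so their composite $\wt\rho$ is too.

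For strictness, I would fix an approximate unit $(a_i)_i$ for $A$; then $(q_A(a_i))_i$ is an approximate unit for $A/I$, and $\wt\rho(q_A(a_i)) = q_B(\rho(a_i))$ for each $i$. Strictness of $\rho$ says that $(\rho(a_i))_i$ is strictly Cauchy in $\Mult(B)$. Because $q_B\colon B\to B/J\subseteq\Mult(B/J)$ is surjective, hence nondegenerate, it extends uniquely to a $*$-homomorphism $\wt q_B\colon \Mult(B)\to \Mult(B/J)$ which is strictly continuous on bounded sets (cf.~\cite[Proposition~2.1]{Lance}). Applying $\wt q_B$ to the strictly Cauchy net $(\rho(a_i))_i$ shows that $(\wt\rho(q_A(a_i)))_i$ is strictly Cauchy in $\Mult(B/J)$, which is exactly strictness of $\wt\rho$.

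The only point requiring any real care is the strict continuity on bounded sets of the multiplier extension of the quotient $q_B$; everything else is bookkeeping. In particular, the splitting $s$ is used only to transport complete positivity across the quotient and need not itself be strict, as strictness of $\wt\rho$ is obtained separately from the approximate-unit argument.
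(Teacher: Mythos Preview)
Your proof is correct and follows essentially the same route as the paper: both write $\wt\rho = q_B\circ\rho\circ s$ to get complete positivity, and both prove strictness by pushing an approximate unit of $A$ through $q_A$ and using that the surjective $*$-homomorphism $q_B$ extends strictly-continuously to multipliers. Your version is slightly more careful in explicitly checking well-definedness and in citing \cite[Proposition~2.1]{Lance} for the multiplier extension, but there is no substantive difference in approach.
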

\begin{proof}
	Fix a splitting $s \colon A/I \to A$ of $q_A$. Then $\tilde{\rho} = q_B \circ \rho \circ s$. Complete positivity of $\tilde{\rho}$ is clear, so we show strictness.
Since $\rho$ is strict we can fix an approximate unit $(e_i)$ of $A$ such that $\rho(e_i)$ is strictly Cauchy in $\Mult(B)$. 
Then $(q_A(e_i))$ is an approximate unit for $A/I$ and $s \circ q_A(e_i) = e_i + k_i$, for some $k_i \in I$.  
As $\rho(I) \subseteq J$, we have $\wt{\rho}(q_A(e_i)) = q_B \circ \rho(e_i)$. 
Since $q_B$ is a surjective $*$-homomorphism it follows that $\wt{\rho}(q_A(e_i))$ is strictly Cauchy, so $\wt{\rho}$ is strict.
\end{proof}

In the case where $\rho \colon A \to B$ is a conditional expectation, the KSGNS space of the induced completely positive map $\wt{\rho} \colon A/I \to B/J$ enjoys a construction similar to that of \cref{ex:KSGNS_cond_exp}.
\begin{lemma}
\label{lem:quotient-expectation}
Suppose that we have the commuting diagram 
\[
    \begin{tikzcd}
        0 \arrow[r] & I \arrow[r] \arrow[d, "\rho|_I"]  & A \arrow[d, "\rho", shift left] \arrow[r,"q_A"] & A/I \arrow[r] \arrow[d,"\wt{\rho}"]  & 0 \\
        0 \arrow[r] & J \arrow[r]           & B \arrow[r, "q_B"]   \arrow[u, "\alpha", shift left]          & B/J \arrow[r]                   & 0
    \end{tikzcd}
\]
as in \cref{lem:quotient-postitive-map}. Suppose 
further that $\rho \colon A \to B$ is a conditional expectation with nondegenerate inclusion $\alpha \colon B \to A$. Let $\big({\rm Id}_{A/I},L^2_{B/J}(A/I,\wt{\rho})\big)$ be the completion (after quotienting by zero vectors) of $A/I$ in the norm coming from the semi-inner product
\begin{equation}
	\langle a_1+I\mid a_2+I\rangle_{B/J}=\rho(a_1^*a_2)+J,\qquad a_1,a_2\in A.
	\label{eq:inner-product}
\end{equation}
Then 
$
\KSGNS(\wt{\rho},{}_{A/I}(B/J)_{B/J}) \cong ({\rm Id}_{A/I},L^2_{B/J}(A/I,\wt{\rho})).
$
\end{lemma}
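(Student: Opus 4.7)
The plan is to use the uniqueness clause~\labelcref{eq:KSGNS-uniqueness} of the KSGNS construction. I need to exhibit an adjointable operator
\[
W\colon B/J \to L^2_{B/J}(A/I,\wt{\rho})
\]
together with the left action $\Id_{A/I}$ on $L^2_{B/J}(A/I,\wt{\rho})$, such that $\wt{\rho}(a+I) = W^*\,\Id_{A/I}(a+I)\,W$ for all $a\in A$, and such that $\Id_{A/I}(A/I)\,W(B/J)$ is dense in $L^2_{B/J}(A/I,\wt{\rho})$. By uniqueness, this will force $L^2_{B/J}(A/I,\wt{\rho})\cong (A/I)\ox_{\wt{\rho}}(B/J)$.

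First, I would define $W$ by the formula $W(b+J) \coloneqq \alpha(b)+I$, viewing the image as an element of $L^2_{B/J}(A/I,\wt{\rho})$. Using that $\rho$ is a conditional expectation (so $\rho\circ\alpha=\Id_B$ and $\rho$ is a $B$-bimodule map) and that $\rho(I)\subseteq J$, a direct computation with the inner product \labelcref{eq:inner-product} gives
\[
  \la W(b_1+J)\mid W(b_2+J)\ra_{B/J} = \rho(\alpha(b_1^*b_2))+J = b_1^*b_2 + J,
\]
which simultaneously shows that $W$ is well-defined on representatives and is isometric. To see that $W$ is adjointable, I would verify that the formula $W^*(a+I) = \rho(a)+J$ produces the correct adjoint; this is again a short calculation using $\rho(a^*\alpha(b)) = \rho(a^*)b$.

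Next, for the intertwining identity, given $a\in A$ and $b\in B$ I compute
\[
    W^* \,\Id_{A/I}(a+I)\, W(b+J) = W^*(a\alpha(b)+I) = \rho(a\alpha(b))+J = \rho(a)b + J = \wt{\rho}(a+I)(b+J),
\]
which is exactly what is required. For density, nondegeneracy of the inclusion $\alpha\colon B\to A$ yields an approximate unit $(b_i)\subset B$ with $\alpha(b_i)\to 1$ strictly in $\Mult(A)$; then for any $a\in A$,
\[
    \Id_{A/I}(a+I)\,W(b_i+J) = a\alpha(b_i)+I \longrightarrow a+I
\]
in the seminorm of \labelcref{eq:inner-product}, so the closure of $\Id_{A/I}(A/I)\,W(B/J)$ contains (the image of) $A/I$, and hence all of $L^2_{B/J}(A/I,\wt{\rho})$.

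The main obstacle, as usual with KSGNS-style uniqueness arguments, is checking that $W$ descends cleanly to the quotients: all three pieces $\alpha(I)\subseteq ?$, $\rho(I)\subseteq J$, and the bimodule property of $\rho$ must conspire to make $W$, $W^*$, and the intertwining identity well-defined on cosets. Once those compatibilities are verified (and they follow from the hypotheses of \cref{lem:quotient-postitive-map} together with $\rho\circ\alpha=\Id_B$), uniqueness of the KSGNS construction~\labelcref{eq:KSGNS-uniqueness} delivers the claimed isomorphism of $A/I$--$B/J$-correspondences.
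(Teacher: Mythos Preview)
Your approach is correct and is essentially the same as the paper's, just packaged through the uniqueness clause of KSGNS rather than constructing the isomorphism by hand. The paper defines $\Lambda\colon (A/I)\ox_{\wt{\rho}}(B/J)\to L^2_{B/J}(A/I,\wt{\rho})$ by $\Lambda\big((a+I)\ox(b+J)\big)=a\alpha(b)+I$ and checks directly that it is isometric, $B/J$-linear, and surjective; your operator $W(b+J)=\alpha(b)+I$ reproduces exactly this map via $\Id_{A/I}(a+I)\,W(b+J)=a\alpha(b)+I$, and the key identity $\rho(a\alpha(b))=\rho(a)b$ together with nondegeneracy of $\alpha$ drives both arguments. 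One small thing the paper makes explicit that you leave implicit: before invoking uniqueness, one should check that \labelcref{eq:inner-product} really gives $L^2_{B/J}(A/I,\wt{\rho})$ the structure of a right $B/J$-module (i.e.\ verify $B/J$-sesquilinearity of the inner product under the action $(a+I)\cdot(b+J)\coloneqq a\alpha(b)+I$), but this is routine.
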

\begin{proof}
Apart from linearity, it is easy to check that \labelcref{eq:inner-product} defines an inner product. Fix a completely positive splitting $s_A$ of $q_A$ so that $\wt{\rho} = q_B \circ \rho \circ s_A$. For the $B/J$-linearity,
let $a+I\in A/I$ and $b+J\in B/J$, and compute
\begin{align*}
\wt{\rho}(a+I)(b+J)&=\rho\circ s_A(a+I)b+J
=\rho(a+i)b+J\quad \mbox{for some }i\in I\\
&=\rho(a\alpha(b)+i\alpha(b))+J
=\rho(a\alpha(b))+J.
\end{align*}
A straightforward computation shows that
\[
\langle a_1+I\mid a_2+I\rangle_{B/J}(b+J)=\langle a_1+I\mid a_2\alpha(b)+I\rangle_{B/J}.
\]
For $a\in A$ and $b\in B$ we have the equality
\begin{align*}
\langle [a\alpha(b)+I]\mid[a\alpha(b)+I]\rangle_{B/J}
&=\rho(\alpha(b^*)a^*a\alpha(b))+J
=(b^*+J)(\rho(a^*a)+J)(b+J)\\
&=\langle b+J\mid \wt{\rho}(a^*a+I)(b+J)\rangle_{B/J}\\
&=\langle b+J\mid \wt{\rho}(\langle a+I\mid a+I\rangle_{A/I})(b+J)\rangle_{B/J}\\
&=\langle (a+I)\ox(b+J)\mid (a+I)\ox(b+J)\rangle_{B/J},
\end{align*}
so there is a well-defined isometric linear map
$
\Lambda\colon A/I\ox_{\wt{\Psi}}B/J\to L^2_{B/J}(A/I,\wt{\rho})$ satisfying $\Lambda((a+I)\ox(b+J))\coloneqq[a\alpha(b)+I]$. The right $B/J$-linearity of $\Lambda$ is routine and nondegeneracy of $\alpha$ gives the surjectivity of $\Lambda$.
\end{proof}

The following result is analogous to \cref{lem:compose-expectation} once we pass to quotients. The result is needed in \cref{sub:cozzy-proj}.

\begin{lemma} 
\label{lem:compose-expectation-quot}
Suppose that 
\[
  \begin{tikzcd}
      0 \arrow[r] & I \arrow[r] \arrow[d, "\rho|_I"]  & A \arrow[d, "\rho",shift left] \arrow[r,"q_A"] & A/I \arrow[r] \arrow[d , "\wt{\rho}"]   & 0 \\
      0 \arrow[r] & J \arrow[r]  \arrow[d, "\sigma|_J"]  & B \arrow[r, "q_B"] \arrow[d, "\sigma"]   \arrow[u, "\alpha", shift left]       & B/J \arrow[r] \arrow[d , "\wt{\sigma}"]                  & 0\\
      0 \arrow[r] &K \arrow[r] & C \arrow[r, "q_C"] & C/K \arrow[r] & 0
  \end{tikzcd},
\]
is a commuting diagram of $C^*$-algebras
with exact rows of $*$-homomorphisms that have completely positive splittings $s_A \colon A/I \to A$ and $s_B \colon B/J \to B$. Let $\rho \colon A \to B$ be a conditional expectation with corresponding inclusion $\alpha \colon B \to A$,
and let $\sigma \colon B \to C$ be a strict completely positive map.
Further suppose that $\rho(I) \subseteq J$ and $\sigma(J) \subseteq K$, and that the maps $\wt{\rho}$ and $\wt{\sigma}$ are the strict completely positive maps induced by \cref{lem:quotient-postitive-map}.
Then there is an isometric inclusion of correspondences
\[
\psi \colon (A/I \ox_{\wt{\rho}} B/J) \ox_{B/J} (B/J \ox_{\wt{\sigma}} C/K) \to A/I \ox_{\wt{\sigma}\circ\wt{\rho}} C/K
\]
satisfying 
\begin{equation}\label{eq:composition_quotient_inclusion}
	\psi\big((a \ox b_1) \ox (b_2 \ox c)\big) =  a q_A( \alpha (s_B(b_1b_2))) \ox c,
\end{equation}
for $a\in A/I$, $b_1,b_2\in B/J$, and $c\in C/K$. 
If $q_A \circ \alpha \circ s_B(B/J)$ contains an approximate unit for $A/I$, then $\psi$ is also surjective.
\end{lemma}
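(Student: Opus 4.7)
The plan is to follow the strategy of \cref{lem:compose-expectation} with care taken for the quotient structure: define $\psi$ directly on elementary tensors via the formula \labelcref{eq:composition_quotient_inclusion}, establish the isometry identity on the pre-completion, extend by continuity, and handle surjectivity separately. Write $\wt{\alpha} \coloneqq q_A \circ \alpha \circ s_B \colon B/J \to A/I$ for convenience. Although $\wt{\alpha}$ is neither multiplicative nor canonical, three properties suffice for what follows. First, $\wt{\alpha}$ is $*$-preserving since both $s_B$ (as a completely positive map) and $\alpha$ (as a $*$-homomorphism) are. Second, $\wt{\rho} \circ \wt{\alpha} = \Id_{B/J}$, which follows from $\rho \circ \alpha = \Id_B$ together with $q_B \circ s_B = \Id_{B/J}$. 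Third, and most importantly, $\wt{\rho}$ satisfies the bimodule identity
\[
    \wt{\rho}\bigl(\wt{\alpha}(b_1) \, x \, \wt{\alpha}(b_2)\bigr) = b_1 \, \wt{\rho}(x) \, b_2
\]
for $b_1, b_2 \in B/J$ and $x \in A/I$. I would establish this by lifting $x$ to some $x' \in A$, applying $\wt{\rho} \circ q_A = q_B \circ \rho$, and invoking the standard bimodule property $\rho(\alpha(b') \, y \, \alpha(b'')) = b' \rho(y) b''$ of the conditional expectation $\rho$; the hypothesis $\rho(I) \subseteq J$ ensures that the resulting element of $B/J$ is independent of the choice of lift $x'$.

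With these identities in hand, compatibility of $\psi$ with the balance over $B/J$ is immediate, as $(a \ox b_1 b) \ox (b_2 \ox c)$ and $(a \ox b_1) \ox (b b_2 \ox c)$ both map to $a \wt{\alpha}(b_1 b b_2) \ox c$. For the isometry I would expand both inner products. On the source, iterating the inner products for the two KSGNS factors and the balanced tensor product gives
\[
    \la (a_1 \ox b_{11}) \ox (b_{12} \ox c_1) \mid (a_2 \ox b_{21}) \ox (b_{22} \ox c_2) \ra = c_1^* \wt{\sigma}\bigl(b_{12}^* b_{11}^* \wt{\rho}(a_1^* a_2) b_{21} b_{22}\bigr) c_2.
\]
On the target, using $\wt{\alpha}(b_{11} b_{12})^* = \wt{\alpha}(b_{12}^* b_{11}^*)$ together with the bimodule identity above,
\[
    \la a_1 \wt{\alpha}(b_{11} b_{12}) \ox c_1 \mid a_2 \wt{\alpha}(b_{21} b_{22}) \ox c_2 \ra = c_1^* \wt{\sigma}\bigl(b_{12}^* b_{11}^* \wt{\rho}(a_1^* a_2) b_{21} b_{22}\bigr) c_2,
\]
which agrees with the source computation. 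Thus $\psi$ factors through the zero-length subspace and extends isometrically to the KSGNS completion; right $C/K$-linearity is immediate from the formula.

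For surjectivity, assume that $(\wt{\alpha}(b_i))_i$ is an approximate unit for $A/I$ with $b_i \in B/J$. Applying Cohen factorization in the $C^*$-algebra $B/J$, write each $b_i = b_i' b_i''$. Then for any elementary tensor $a \ox c$ in $A/I \ox_{\wt{\sigma} \circ \wt{\rho}} C/K$,
\[
    \psi\bigl((a \ox b_i') \ox (b_i'' \ox c)\bigr) = a \wt{\alpha}(b_i) \ox c \longrightarrow a \ox c,
\]
the convergence following from $a \wt{\alpha}(b_i) \to a$ in $A/I$ together with continuity of the tensor product. Since such tensors are dense and $\psi$ is an isometry, its image is both dense and closed, hence equal to the target. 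The main obstacle I anticipate is the bimodule identity at the outset: because $s_B$ is not multiplicative and $\wt{\alpha}$ depends on the chosen splitting, one must verify that the identity holds on all of $A/I$ and is independent of the lift $x'$, and this is precisely where the hypothesis $\rho(I) \subseteq J$ is used.
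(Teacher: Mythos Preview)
Your proof is correct and follows essentially the same approach as the paper. The paper computes the inner products by explicitly lifting via $s_A$ and tracking the difference $s_A(d^*d) - \alpha(s_B(b_1b_2))^* s_A(a)^* s_A(a)\, \alpha(s_B(b_1b_2)) \in I$, whereas you package the same computation into the bimodule identity $\wt{\rho}(\wt{\alpha}(b_1)\, x\, \wt{\alpha}(b_2)) = b_1\, \wt{\rho}(x)\, b_2$; these are equivalent, and your formulation is arguably cleaner. For surjectivity the paper writes $b_i = b_i^{1/2} b_i^{1/2}$, tacitly assuming the preimages $b_i$ can be taken positive, while your use of Cohen factorisation sidesteps that assumption. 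One small omission: you verify right $C/K$-linearity but not left $A/I$-linearity of $\psi$; this is immediate from the formula, and the paper dismisses it in the same way.
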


\begin{proof}
We first show that \labelcref{eq:composition_quotient_inclusion} yields a well-defined map.
Let $s_A \colon A/I \to A$ and $s_B \colon B/J \to J$ be the completely positive splittings of $q_A$ and $q_B$, respectively.
For $a \in A/I$, $b_1,b_2 \in B/J$, and $c \in C/K$, a computation similar to that of~\cref{lem:compose-expectation} shows that
\begin{align*}
    \big\langle(a \ox b_1) \ox (b_2 \ox c) \mid (a \ox b_1) \ox (b_2 \ox c)\big\rangle_{C/K} & = c^* \wt{\sigma} \big(b_2^*b_1^* \wt{\rho}(a^*a) b_1b_2\big)c.
\end{align*}
Recall from \cref{lem:quotient-postitive-map} that $\wt{\rho}  = q_B \circ \rho \circ s_A$ and $\wt{\sigma} = q_C \circ \sigma \circ s_B$. 
Since $s_A(a^*)s_A(a) - s_A(a^*a) \in I$ and $\rho(I) \subseteq J$ it follows that
  \begin{align*}
      b_2^*b_1^* \wt{\rho}(a^*a) b_1b_2 & = q_B \big( s_B(b_2^*b_1^*) \rho(s_A(a^*)s_A(a))  s_B(b_1b_2) \big)                     \\
                                     & = q_B \circ \rho \big( \alpha( s_B(b_1b_2))^*s_A(a)^*s_A(a)  \alpha(s_B(b_1b_2)) \big).
  \end{align*}
  Define $d \coloneqq a q_A(\alpha(s_B(b_1b_2))) \in A/I$ and note that 
  \begin{equation*}
    s_A(d^*d) -   \alpha(s_B(b_1b_2))^*s_A(a)^*s_A(a) \alpha(s_B(b_1b_2)) \in I. 
  \end{equation*}
  Consequently,
  \begin{align*}
      \la(a \ox b_1) \ox (b_2 \ox c) \mid (a \ox b_1) \ox (b_2 \ox c)\ra_C & = c^* \wt{\sigma} ( q_B \circ \rho \circ s_A(d^*d)) c 
      = c^* \wt{\sigma} \wt{\rho} (d^*d) c                  \\
   & = \la d \ox c \mid d \ox c \ra_{C/K},
  \end{align*}
  where the latter inner product is on $A/I \ox_{\wt{\sigma}\circ\wt{\rho}} C/K$.
  This shows that $\psi$ is a well-defined isometric linear map.
  It is straightforward to see that $\psi$ respects the left action of $A/I$ and the right action of $C/K$.

  For the final statement, suppose $(b_i)_i$ is a net in $B/J$ such that $a_i \coloneqq q_A \circ \alpha \circ s_B(b_i)$ defines an approximate unit for $A/I$.
  Then
  \[
    \lim_i \psi( (a \ox b_{i}^{1/2}) \ox (b_i^{1/2}  \ox c)) = \lim_i a a_i \ox c = a \ox c,
  \]
  and it follows that $\psi$ is surjective.
\end{proof}

\section{Cuntz--Pimsner morphisms from projections}
\label{sec:projections}

Armed with the theoretical framework of positive correspondences, we now look at applications to Cuntz--Pimsner algebras. 
We begin by outlining the difficulty with defining correspondence morphisms from complemented sub-correspondences. 
We then turn to more general projections on Fock modules and the compressions of product systems (over $\N$) which give rise to subproduct systems (over $\N$). 

\subsection{Morphisms from correspondence projections}
\label{sub:cozzy-proj}

A \emph{complemented} sub-correspondence is a \mbox{$C^*$-correspondence} that appears as a direct summand of a $C^*$-correspondence.
This is stronger than being complemented as a right $C^*$-module since the left action must also be respected.
Not every sub-correspondence is complemented. A sub-correspondence is complemented precisely if it is the image of a correspondence projection. 
\begin{defn}
  A \emph{correspondence projection} on a correspondence $(\phi,{}_A {X}_B)$ is a projection $P \in \End_B(X)$ such that $\phi(a)P = P\phi(a)$ for all $a \in A$.
\end{defn}

If $P$ is a correspondence projection on $(\phi,{}_AX_B)$, then $1-P$ is a correspondence projection,
and $(P\phi, {}_A PX_B)$ is a complemented sub-$A$--$B$-correspondence determining the direct sum decomposition $X_B \cong PX_B \oplus (1-P)X_B$.
Conversely, every complemented sub-$A$--$B$-correspondence is the image of a correspondence projection. 
If $\cor{\phi_Y}{A}{Y}{A}$ is the image of a correspondence projection $P$ on $\cor{\phi_X}{A}{X}{A}$ we will write $P \colon \cor{\phi_X}{A}{X}{A} \to \cor{\phi_Y}{A}{Y}{A}$
where $Y_A = PX_A$. 
We will also write $Y_A^\perp \coloneqq (1-P)X_A$.

If $Y = PX$, then the left action $\phi_X$ acts diagonally on $X$ with respect to the direct sum decomposition.
This follows from the computation 
\begin{align*}
  \phi_X(a) x & = P\phi_X(a) P x + (1-P) \phi_X(a) Px + P \phi_X(a) (1-P)x + (1-P) \phi_X(a) (1-P)x \\
  & =\phi_Y(a)Px + \phi_{Y^{\perp}}(a) (1-P)x,
\end{align*}
which holds for all $a\in A$ and $x\in X$.
In particular, $\phi_X(a) \in \End^0_{A}(X)$ if and only if $\phi_Y(a) \in \End^0_{A}(Y) $ and $\phi_{Y^{\perp}}(a) \in \End^0_{A}(Y^{\perp})$.

Also observe that $\ker(\phi_X) = \ker(\phi_Y) \cap \ker(\phi_{Y^{\perp}})$. 
Since $\ker(\phi_Y)^{\perp} \cap \ker(\phi_{Y^\perp})^{\perp}$ is a subset of $(\ker(\phi_Y) \cap \ker(\phi_{Y^{\perp}}))^{\perp}$  the covariance ideals satisfy $J_Y \cap J_{Y^{\perp}} \subseteq J_X$. 
In general, it is not true that $J_Y \subseteq J_X$ nor that $J_X \subseteq J_Y$.
However, in the case that $(\phi_{Y^{\perp}}, Y_A^{\perp})$ is regular, we have the following immediate result.

\begin{lemma}\label{lem:nice_complement_good_ideals}
  If $\cor{\phi_Y}{A}{Y}{A}$ is a complemented sub-correspondence of $\cor{\phi_X}{A}{X}{A}$, and $(\phi_{Y^{\perp}}, Y_A^{\perp})$ is regular, then $J_X = J_Y$.
\end{lemma}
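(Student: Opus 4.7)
The plan is to unpack $J_X$ and $J_Y$ via the decomposition $X \cong Y \oplus Y^\perp$ of complemented sub-correspondences, under which the left action is diagonal, $\phi_X = \phi_Y \oplus \phi_{Y^\perp}$. This immediately gives
\[
  \ker(\phi_X) = \ker(\phi_Y) \cap \ker(\phi_{Y^\perp}),
\]
and, using that a diagonal operator on a direct sum of correspondences is compact exactly when each of its two diagonal entries is compact on the corresponding summand,
\[
  \phi_X^{-1}(\End^0_A(X)) = \phi_Y^{-1}(\End^0_A(Y)) \cap \phi_{Y^\perp}^{-1}(\End^0_A(Y^\perp)).
\]

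Next I would feed in the regularity hypothesis on $(\phi_{Y^\perp}, Y^\perp)$. Injectivity of $\phi_{Y^\perp}$ makes the first intersection collapse to $\ker(\phi_X) = 0$, so $\ker(\phi_X)^\perp = A$. The inclusion $\phi_{Y^\perp}(A) \subseteq \End^0_A(Y^\perp)$ makes the second intersection collapse to $\phi_X^{-1}(\End^0_A(X)) = \phi_Y^{-1}(\End^0_A(Y))$. Substituting these two simplifications into the defining formula $J_X = \phi_X^{-1}(\End^0_A(X)) \cap \ker(\phi_X)^\perp$ yields the clean reformulation $J_X = \phi_Y^{-1}(\End^0_A(Y))$.

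With $J_X$ in this form, the inclusion $J_Y \subseteq J_X$ is essentially automatic: for $a \in J_Y$ the component $\phi_Y(a)$ is already compact, while regularity supplies the other component $\phi_{Y^\perp}(a) \in \End^0_A(Y^\perp)$, so $\phi_X(a) \in \End^0_A(X)$; the orthogonality condition is trivial because $\ker(\phi_X)^\perp = A$.

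The reverse inclusion $J_X \subseteq J_Y$ is where I expect the real work to sit: I need every $a$ with $\phi_Y(a) \in \End^0_A(Y)$ to satisfy $ab = 0$ for every $b \in \ker(\phi_Y)$. The naive observation $a \in \ker(\phi_X)^\perp = A$ is uninformative, since $\ker(\phi_Y)$ can strictly contain $\ker(\phi_X) = 0$. My plan is to exploit the diagonal splitting a second time: any $b \in \ker(\phi_Y)$ acts on $X$ as $0 \oplus \phi_{Y^\perp}(b)$, so I would try to translate the compactness of $\phi_Y(a)$, together with the faithfulness of $\phi_{Y^\perp}$ on $\ker(\phi_Y)$, into the desired orthogonality. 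If that does not close in full generality, the intended reading is presumably that $\phi_Y$ is itself injective in the applications (as happens for sub-correspondences cut out from Fock modules), in which case $\ker(\phi_Y)^\perp = A$ and both ideals collapse to $\phi_Y^{-1}(\End^0_A(Y))$.
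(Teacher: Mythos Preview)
Your argument for $J_Y \subseteq J_X$ is correct and is precisely the paper's implicit reasoning: the discussion preceding the lemma shows $J_Y \cap J_{Y^\perp} \subseteq J_X$, and regularity of $Y^\perp$ gives $J_{Y^\perp} = A$, whence $J_Y \subseteq J_X$. The paper offers nothing further, calling the result ``immediate''. Your reduction $J_X = \phi_Y^{-1}(\End^0_A(Y))$ and $J_Y = \phi_Y^{-1}(\End^0_A(Y)) \cap \ker(\phi_Y)^\perp$ is also correct and makes the remaining issue transparent.

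Your hesitation about the reverse inclusion is justified: as stated, the lemma is false. Take $A = \C^2$, let $Y = \C$ with right action $y \cdot (a_1,a_2) = y a_1$, inner product $\langle y, y' \rangle_A = (\bar y y', 0)$, and left action $\phi_Y(a_1,a_2)\, y = a_1 y$; and let $Y^\perp = {}_A A_A$. Then $Y^\perp$ is regular and $X = Y \oplus Y^\perp$ is finitely generated projective, so $J_X = A$; but $\ker(\phi_Y) = \{0\} \times \C$ forces $J_Y = \C \times \{0\} \subsetneq J_X$. The hypotheses constrain $\ker(\phi_{Y^\perp})$ but say nothing about $\ker(\phi_Y)$. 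Your proposed additional hypothesis that $\phi_Y$ be injective does close the gap, since then $\ker(\phi_Y)^\perp = A$ and both ideals equal $\phi_Y^{-1}(\End^0_A(Y))$.
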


Let $\cor{\phi_Y}{A}{Y}{A}$ is a complemented sub-correspondence of $\cor{\phi_X}{A}{X}{A}$, and let $\iota \colon Y \to X$ denote the inclusion. 
Then $(\Id_A,\iota)$ is a correspondence morphism,
so the universal property of $\cT_Y$ yields an injective $*$-homomorphism 
\begin{equation}
\alpha \coloneqq  \Id_A \times \iota \colon \cT_Y \to \cT_X \subseteq \End_A(\Fock_X)
\label{eq:alpha}
\end{equation}
satisfying $\alpha(T_\xi T_\eta^*) = T_{\iota(\xi)}T_{\iota(\eta)}^*$ for all $\xi, \eta \in Y$. 
The correspondence morphism $(\Id_A,\iota)$ is typically not covariant as the following example shows,
so $\alpha \colon \cT_Y \to \cT_X$ does not typically descend to a $*$-homomorphism between Cuntz--Pimsner algebras.

\begin{example}
	\label{ex:covariance_fails}
Let ${}_A X_A = {}_{\C} \C^2_\C$ with the left action given by scalar multiplication. Let ${}_A Y_A = {}_\C \C_\C$ be the complemented sub-$\C$--$\C$-correspondence spanned by the basis vector $e_1$ of $\C^2$.
Let $T$ denote the generating isometry $j_Y(e_1)$ of $\cT_{Y} \cong \cT$ and let $T_1 = j_X(e_1)$ and $T_2 = j_X(e_2)$ be the generating isometries of $\cT_X \cong \cT_2$. Then $\alpha(T) = T_1$.
  
In the quotient Cuntz--Pimsner algebra, $T$ descends to the unitary $U \colon z \mapsto z$ that generates $\cO_Y \cong C(S^1)$, 
while $T_1$ and $T_2$ descend to isometries $S_1$ and $S_2$ generating $\cO_X \cong \cO_2$. 
Consequently, $\alpha$ cannot descend to a $*$-homomorphism between Cuntz--Pimsner algebras as such a map would take the unitary $U$ to the non-unitary isometry $S_1$.  
\end{example}

Aside from $\alpha$, there is a second representation of $\cT_Y$ in  $\End_A(\Fock_X)$ arising from the correspondence projection $P$ that we will take a moment to explain. 
First, observe that if $\cor{\phi_Y}{A}{Y}{A}$ is a sub-$A$--$A$-correspondence of $\cor{\phi_X}{A}{X}{A}$, then the Fock module $\Fock_Y$ may be identified as a sub-$A$--$A$-correspondence of $\Fock_X$. The following lemma shows that complementability is also preserved.

\begin{lemma}
  \label{lem:fock-projection}
Suppose that $P_0 \colon \cor{\phi_X}{A}{X}{A} \to \cor{\phi_Y}{A}{Y}{A}$ is a correspondence projection. 
Then $P_0$ extends to a correspondence projection $P_n \colon \cor{\phi_X}{A}{X^{\ox n}}{A} \to \cor{\phi_Y}{A}{Y^{\ox n}}{A}$ by
\begin{equation}
    \label{eq:correspondence_projection_n}
    P_n \coloneqq P_0 \ox \cdots \ox P_0.
\end{equation}
The sum  $\sum_{n \ge 0} P_n$ converges strictly to a projection $P \colon (\phi_X,\Fock_X) \to (\phi_Y,\Fock_Y)$, so 
 $\Fock_Y$ is complemented in $\Fock_X$.
\end{lemma}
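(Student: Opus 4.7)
The plan is to build $P_n$ inductively from tensor factors of $P_0$, verify adjointability and projection properties factor-wise, and then assemble the $P_n$ into a strict limit on the Fock module.

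The first step is to make sense of $P_n$ as an adjointable operator on $X^{\otimes n}$. Given a correspondence $(\phi,{}_A Z_A)$, the operator $P_0 \otimes \Id_Z$ descends to the balanced tensor product $X \otimes_A Z$: well-definedness on simple tensors requires that $P_0(xa)\otimes z = P_0(x)\otimes \phi(a)z$, which follows from right $A$-linearity of $P_0$. Dually, for $\Id_X \otimes P_0$ to descend to $X \otimes_A X$ one needs $x a \otimes P_0(y) = x \otimes P_0(\phi_X(a) y)$, and this is exactly the commutation $P_0 \phi_X(a) = \phi_X(a) P_0$ in the definition of a correspondence projection. So I would define $P_n$ recursively as $P_1 \coloneqq P_0$ and $P_n \coloneqq (P_0 \otimes \Id_{X^{\otimes (n-1)}}) \circ (\Id_X \otimes P_{n-1})$, unwinding to $P_n = P_0 \otimes \cdots \otimes P_0$. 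Each factor is adjointable (being built from $P_0$ and identities), self-adjoint, and idempotent; since the factors in the telescoping product commute, $P_n$ is again a self-adjoint projection.

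Next I verify $P_n$ is a correspondence projection with image $Y^{\otimes n}$. The left action of $A$ on $X^{\otimes n}$ is $\phi_X(a)\otimes \Id \otimes \cdots \otimes \Id$, and since $P_0 \phi_X(a) = \phi_X(a) P_0$ on the first factor, $P_n$ commutes with $\phi_X(a)$. For the image, the range of each tensor factor of the form $P_0 \otimes \Id \otimes \cdots \otimes \Id$ is $Y \otimes_A X^{\otimes(n-1)}$, and taking the composition over all $n$ slots shows $P_n X^{\otimes n}$ is densely spanned by elementary tensors from $Y$, which by definition is $Y^{\otimes n} \subseteq X^{\otimes n}$.

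For the strict convergence of $\sum_{n \ge 0} P_n$ on $\Fock_X$, extend each $P_n$ to $\Fock_X$ by zero on the other summands. Then the partial sums $S_N \coloneqq \sum_{n=0}^N P_n$ are mutually orthogonal projections; for $\xi = (\xi_n)_n \in \Fock_X$,
\[
\|(S_M - S_N)\xi\|^2 = \sum_{n=N+1}^{M} \|P_n \xi_n\|^2 \le \sum_{n=N+1}^M \|\xi_n\|^2 \xrightarrow{N,M\to\infty} 0,
\]
so $(S_N \xi)$ is Cauchy and converges to some $P\xi \in \Fock_X$. Since each $S_N$ is self-adjoint, strong convergence coincides with strong-$*$ convergence, which agrees with strict convergence on the norm-bounded set of projections. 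The limit $P$ inherits self-adjointness and idempotence by continuity of these properties under pointwise limits of uniformly bounded self-adjoint operators (checking $P^2 = P$ on each $X^{\otimes n}$), commutes with $\phi_X$ since every $P_n$ does, and its image is $\bigoplus_n Y^{\otimes n} = \Fock_Y$.

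The main obstacle I anticipate is the well-definedness bookkeeping in the first paragraph: one has to be careful that the only $A$-balancing condition one invokes is exactly the commutation $P_0 \phi_X = \phi_X P_0$ (the correspondence-projection hypothesis), and that the recursive and ``simultaneous'' definitions of $P_n$ really agree. Once that is nailed down, the remaining steps are essentially componentwise and follow the standard strategy for recognising orthogonal sums of projections as strict limits on Hilbert modules.
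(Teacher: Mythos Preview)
Your proposal is correct and follows the same approach as the paper's proof: the paper simply observes that ``$A$-bilinearity of $P_0$ ensures that \eqref{eq:correspondence_projection_n} is a well-defined correspondence projection'' and that the second statement follows because $\Fock_X$ is a direct sum. You have unpacked exactly these two sentences in detail, including the balancing check for $\Id_X \otimes P_0$ via $P_0\phi_X = \phi_X P_0$ and the standard diagonal argument for strict convergence of orthogonal projections on an $\ell^2$-direct sum.
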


\begin{proof}
The $A$-bilinearity of $P_0$ ensures that \labelcref{eq:correspondence_projection_n} is a well-defined correspondence projection. 
Since $\Fock_X$ is a direct sum, the second statement follows from the first.
\end{proof}

If $\Fock_Y$ is complemented in $\Fock_X$, then there is an inclusion $\overline{\iota} \colon \End_A(\Fock_Y) \to \End_A(\Fock_X)$ 
defined by $\ol{\iota}(T)(x \oplus x^{\perp}) = Tx$ for $x\in \Fock_Y$ and $x^{\perp} \in \Fock_Y^{\perp}$. Since $\cT_Y$ is a subalgebra of $ \End_A(\Fock_Y)$, the map $\ol{\iota}$ restricts to an injective $*$-homomorphism 
\begin{equation}
\beta \colon \cT_Y \to \End_A(\Fock_X).
\label{eq:beta}
\end{equation}

The representations $\alpha$ and $\beta$ of $\cT_Y$ in $\End_A(\Fock_X)$ do not usually coincide.
Indeed, if $A$ were unital then $\alpha(1_A) = \Id_{\Fock_X}$ while $\beta(1_A) = \ol{\iota}(\Id_{\Fock_Y})$ is typically a proper subprojection of $\Id_{\Fock_X}$. 
Moreover, if $T_y\in\cT_Y$ is a creation operator on $\Fock_Y$ for some $y \in Y_A$, then $\beta(T_y)$ is typically not a creation operator on $\Fock_X$ since it is zero on $\Fock_Y^{\perp}$.

Despite neither $\alpha$ nor $\beta$ inducing a $*$-homomorphism from $\cO_Y$ to $\cO_X$, we can use the projection $P$ of \cref{lem:fock-projection} to build a conditional expectation from $\cT_X$ to $\cT_Y$ which respects both inclusions $\alpha$ and $\beta$. By then applying \cref{lem:quotient-expectation} we will construct a positive map from $\cO_X$ to $\cO_Y$ which gives morphisms in both $\Quesa$ and $\Ench$.

\begin{lemma}
\label{lem:fock-expectation}
Let $P \colon (\phi_X,\Fock_X) \to (\phi_Y,\Fock_Y)$ be the correspondence projection of \cref{lem:fock-projection}. The map $\Psi_P \colon \End_A(\Fock_X) \to \End_A(\Fock_Y)$ defined by 
\begin{equation}\label{eq:psi_P}
\Psi_P(T) = PTP
\end{equation}
is a conditional expectation. It restricts to a conditional expectation $\Psi_P \colon \cT_X \to \cT_Y$ such that
\begin{equation}\label{eq:toeplitz-expectation}
  \Psi_P (\alpha(b_1)a \alpha(b_2)) = b_1\Psi_P( a ) b_2 = \Psi_P (\beta(b_1)a \beta(b_2))
\end{equation}
for all $a \in \cT_X$ and $b_1,\,b_2 \in \cT_Y$.
\end{lemma}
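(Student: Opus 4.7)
The plan is to verify the three claims of the lemma in sequence, with the main technical work concentrated on showing that $\Psi_P$ restricts to a map $\cT_X \to \cT_Y$.

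First I will show that $\Psi_P \colon \End_A(\Fock_X) \to \End_A(\Fock_Y)$ is a conditional expectation. The compression map $T \mapsto PTP$ is a completely positive contraction, and under the canonical identification of $P \End_A(\Fock_X) P$ with $\End_A(\Fock_Y)$ via $\overline{\iota}$ we have $\Psi_P \circ \overline{\iota} = \Id_{\End_A(\Fock_Y)}$, so Tomiyama's theorem (or direct verification) gives the conditional expectation property.

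For the restriction to $\cT_X \to \cT_Y$, I will rely on three basic commutation properties of $P$ with the Toeplitz generators, each of which follows directly from the formula $P_n = P_0^{\otimes n}$ of \cref{lem:fock-projection} and the defining properties of the correspondence projection $P_0$: (i) for $y \in Y$ both $T_y$ and $T_y^*$ commute with $P$, since $Y$ is a sub-correspondence closed under the left $A$-action; (ii) for $w \in Y^\perp$ we have $PT_w = 0$ (because $P(w \otimes z) = P_0 w \otimes P_k z = 0$) and dually $T_w^* P = 0$ (because the pairing $\langle w \mid y_1\rangle_A$ vanishes for $y_1 \in Y$); and (iii) for $a \in A$, $T_a$ commutes with $P$ since $\phi_X(a)$ commutes with $P_0$. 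Decomposing an arbitrary $x \in X$ as $x = P_0 x + (1-P_0) x$ and applying (i) and (ii) gives $\Psi_P(T_x) = T_{P_0 x}$ and $\Psi_P(T_x^*) = T_{P_0 x}^*$ as operators on $\Fock_Y$. For a general monomial $M = T_{x_1} \cdots T_{x_n} T_{y_m}^* \cdots T_{y_1}^*$, I will compute $PMP$ directly on vectors $z \in Y^{\otimes k}$: the annihilation operators $T_{y_j}^*$ act on $\Fock_Y$ exactly as $T_{P_0 y_j}^*$ (since only the $Y$-component of the leftmost tensor factor contributes to the pairing), and the final projection $P$ extracts the $P_0$-components of the $x_i$, so that
\[
  \Psi_P(M) = T_{P_0 x_1} \cdots T_{P_0 x_n} T_{P_0 y_m}^* \cdots T_{P_0 y_1}^* \in \cT_Y.
\]
Linearity and norm-continuity then give $\Psi_P(\cT_X) \subseteq \cT_Y$, and the conditional expectation property follows from $\Psi_P(\alpha(b)) = P\alpha(b)P = \alpha(b) P$ (using (i) and (iii)), which restricts to $b$ on $\Fock_Y$.

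Finally, for the bimodule identities~\labelcref{eq:toeplitz-expectation}, commutation of $P$ with $\alpha(\cT_Y)$ (which follows from (i) and (iii) on generators and continuity) yields
\[
  \Psi_P(\alpha(b_1) a \alpha(b_2)) = \alpha(b_1)\, PaP\, \alpha(b_2),
\]
which becomes $b_1 \Psi_P(a) b_2$ after restricting to $\Fock_Y$, since $\alpha(b)|_{\Fock_Y} = b$. For the $\beta$-identity, I will use that $\beta(b)$ has range in $\Fock_Y$ and vanishes on $\Fock_Y^\perp$, so $\beta(b) = P\beta(b) = \beta(b)P$, and the same manipulation gives $\Psi_P(\beta(b_1) a \beta(b_2)) = b_1 \Psi_P(a) b_2$. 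I anticipate the main obstacle to be carrying out the monomial computation in the second paragraph cleanly while keeping track of the distinction between operators on $\Fock_X$ and their restrictions to $\Fock_Y$; the key mechanism preventing $\Psi_P$ from picking up contributions outside $\cT_Y$ is the vanishing $T_w^* P = 0$ for $w \in Y^\perp$.
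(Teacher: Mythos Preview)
Your proposal is correct. The overall architecture matches the paper's proof: both establish that compression by $P$ is a conditional expectation on the ambient endomorphism algebra, then verify on spanning monomials that $\Psi_P(T^X_\xi T^{X*}_\eta)=T^Y_{P\xi}T^{Y*}_{P\eta}$, and finally check the bimodule identity \labelcref{eq:toeplitz-expectation}. Your commutation relations (i)--(iii) are exactly the mechanism underlying the paper's vector computations, just isolated and named in advance rather than discovered in the course of the calculation.

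The one genuine difference is in the treatment of the $\alpha$-bimodule identity. The paper proves $\Psi_P(\alpha(T^Y_xT^{Y*}_y)T^X_\xi T^{X*}_\eta)=T^Y_xT^{Y*}_y\,\Psi_P(T^X_\xi T^{X*}_\eta)$ by a direct and somewhat lengthy case analysis on the relative lengths of the tensors, working out $T^{X*}_yT^X_\xi$ explicitly and then applying $P$. You instead observe that $[P,\alpha(b)]=0$ for all $b\in\cT_Y$---which follows immediately from (i) and (iii) on generators and passes to the $C^*$-closure---and then the identity $P\alpha(b_1)a\alpha(b_2)P=\alpha(b_1)PaP\alpha(b_2)$ is a one-line consequence. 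This is cleaner and more robust: it avoids the case split entirely and makes transparent why the identity holds. The paper's explicit computation has the compensating virtue of producing the formula $\Psi_P(T^X_\xi T^{X*}_\eta)=T^Y_{P\xi}T^{Y*}_{P\eta}$ for \emph{tensors} $\xi,\eta\in\Fock_X$ of arbitrary length in one stroke, whereas your monomial argument with degree-one $x_i,y_j\in X$ recovers this only after reassembling $T_\xi=T_{\xi_1}\cdots T_{\xi_k}$. Both routes reach the same destination; yours is the more structural of the two.
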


\begin{proof}
There are multiple Fock spaces and Toeplitz algebras around, so if $T \in \cT_X$, then we write $T^X$ for the corresponding operator on $\Fock_X$, and if $T \in \cT_Y$ we write $T^Y$ for the corresponding operator on $\Fock_Y$. We also use $\iota \colon \Fock_Y \to \Fock_X$ to identify $\Fock_Y$ as a submodule of $\Fock_X$. In particular, if $\xi \in Y$, then $\alpha(T_\xi^Y) = T^X_{\xi}$.

That \labelcref{eq:psi_P} defines an expectation for the inclusion $\ol{\iota}  \colon \End_A(\Fock_Y) \to \End_A(\Fock_X)$ is clear. It remains to show that $\Psi_P$ restricts to Toeplitz algebras. 
Let $y = y_1 \ox \cdots \ox y_k \in \Fock_Y \subseteq \Fock_X$ and $\xi , \eta \in \Fock_X$ be elementary tensors. 
It follows from the $A$-bilinearity of $P$ that for $k \ge 1$
\begin{align*}
  \Psi_P(T^X_\xi T^{X*}_\eta) y
  &= PT^X_\xi T^{X*}_\eta P y\\
  & = \begin{cases}
      P(\xi \cdot\langle \eta \mid Py_1 \ox \cdots \ox y_{|\eta|} \rangle_A  \ox y_{|\eta|+1} \ox \cdots \ox y_{k}) & \text{if } k \geq |\eta| \\
      0                                                                                            & \text{if } k <  |\eta|
    \end{cases}   \\
                             & = \begin{cases}
        (P\xi )\cdot \langle P \eta \mid y_1 \ox \cdots \ox y_{|\eta|} \rangle_A  \ox y_{|\eta|+1} \ox \cdots \ox y_{k} & \text{if } k \geq |\eta| \\
        0                                                                                             & \text{if } k < |\eta|
    \end{cases}   
                             = T_{P\xi}^Y T_{P\eta}^{Y*} y.
\end{align*}
Similarly, if $y \in A$, then $\Psi_P(T^X_\xi T^{X*}_\eta)y = T_{P\xi}^Y T_{P\eta}^{Y*} y$, so $\Psi_P (\cT_X) = \cT_Y$. 
	
Recall that for elementary tensors $\xi,\eta \in \Fock_Y$ we have $\alpha(T^Y_\xi T_\eta^{Y*})  = T_{\xi}^X T_{\eta}^{X*}$.
The previous calculation shows that
$
\Psi_P ( \alpha (T^Y_\xi T_\eta^{Y*})) = T^Y_\xi T_\eta^{Y*},
$
so $\Psi_P \circ \alpha = \Id_{\cT_Y}$. Since $\Psi_P\circ\ol{\iota}=\Psi_P\circ\beta=\Id_{\cT_Y}$ we also have $\Psi_P \circ \beta=\Psi_P \circ \alpha$.

For the first equality of \labelcref{eq:toeplitz-expectation}, fix elementary tensors $y \in Y^{\ox n}_A$ and $\xi \in X^{\ox m}_A$. Then
\begin{align*}
    T^{X*}_{y} T^X_\xi
     & = \begin{cases}
        \langle y \mid \xi_1 \ox \cdots \ox \xi_n \rangle _A  T^X_{\xi_{n+1} \ox \cdots \ox \xi_{m}}      & \text{if } m \ge n \\
        T^{X*}_{{y_{m+1} \ox \cdots \ox y_n}} \langle y_1 \ox \cdots \ox y_{m} \mid \xi\rangle_A & \text{if } n > m
    \end{cases} \\
     & = \begin{cases}
        T^X_{  \phi_X(\langle y \mid P (\xi_1 \ox \cdots \ox \xi_n) \rangle_A) \xi_{n+1} \ox \cdots \ox \xi_{m}}      & \text{if } m \ge n \\
        T^{X*}_{ \phi_X(\langle P\xi \mid y_1 \ox \cdots \ox y_{m}\rangle_A) {y_{m+1} \ox \cdots \ox y_n}} & \text{if } n > m.
    \end{cases}
\end{align*}
So, for elementary tensors $x \in \Fock_Y$ and $\eta \in \Fock_X$,
	\begin{align*}
		\Psi_P(\alpha(T_x^Y T_y^{Y*})T_\xi^X T_\eta^{X*})
		 & = P T_{x}^X T_{y}^{X*} T_\xi^X T_\eta^{X*} P \\
		 & =\begin{cases}
			P T^X_{x \ox \phi_X(\langle y \mid P_0 \xi_1 \ox \cdots \ox P_0\xi_n \rangle_A)   \xi_{n+1} \ox \cdots \ox \xi_{m}} T_\eta^{X*} P     & \text{if } m \ge n \\
			P T_{x}^X 	 T_{\eta \ox \phi_X(\langle P\xi \mid y_1 \ox \cdots \ox y_{m}\rangle_A)  {y_{m+1} \ox \cdots \ox y_n}}^{X*} P & \text{if } n > m
		\end{cases}\\
		& =\begin{cases}
			 T^Y_{x \ox \phi_Y(\langle y \mid P_0 \xi_1 \ox \cdots \ox P_0\xi_n \rangle_A)  P_0 \xi_{n+1} \ox \cdots \ox P_0 \xi_{m}} T_\eta^{Y*}      & \text{if } m \ge n \\
			 T_{x}^Y	 T_{P\eta \ox \phi_Y(\langle P\xi \mid y_1 \ox \cdots \ox y_{m}\rangle_A)  {y_{m+1} \ox \cdots \ox y_n}}^{Y*}  & \text{if } n > m
		\end{cases}         \\
		 & = T_x^Y T_y^{Y*} T_{P\xi}^Y T_{P\eta}^{Y*}                 
		  = T_x^Y T_y^{Y*} \Psi_P(T_{\xi}^X T_{\eta}^{X*}).
	\end{align*}
Similarly, $\Psi_P(T_\xi^X T_\eta^{X*}\alpha(T_x^X T_y^{X*})) =  \Psi_P(T_{\xi}^X T_{\eta}^{X*}) T_x^Y T_y^{Y*}$. 
		
Since $\Psi_P\colon \End_A(\Fock_X) \to \End_A(\Fock_Y)$ is an expectation for the inclusion $\ol{\iota}$, 
it follows that $\Psi_P$ descends to an expectation $\Psi_P \colon \cT_X\to \cT_Y$ for the inclusion $\alpha$. 
The second equality of \labelcref{eq:toeplitz-expectation} follows because $\Psi_P$ is an expectation for $\ol{\iota}$ and $\beta =\ol{\iota}|_{\cT_Y}$.
\end{proof}

With the expectation $\Psi_P \colon \cT_X \to \cT_Y$ in hand, 
we aim to appeal to \Cref{lem:quotient-postitive-map} to induce a strict completely positive map between the corresponding Cuntz--Pimsner algebras.
Since $\cO_X$ is the quotient $\cT_X / \End_A^0(\Fock_X \cdot J_X)$ we need to show that $\Psi_P$ takes $\End_A^0(\Fock_X \cdot J_X)$ to $\End_A^0(\Fock_Y \cdot J_Y)$. 

\begin{lemma}
	\label{lem:fock-projection-compacts}
	Let $\Psi_P \colon \cT_X \to \cT_Y$ be the conditional expectation of \Cref{lem:fock-expectation}. If $J_X = J_Y$, then $\Psi_P(\End_A^0(\Fock_X \cdot J_X)) = \End_A^0(\Fock_Y \cdot J_Y)$.
\end{lemma}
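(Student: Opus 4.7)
The strategy is to verify both containments by evaluating $\Psi_P$ on the rank-one generators of the two compact ideals. The key observation to record first is that, because $P$ is a correspondence projection, it is $A$-bilinear and self-adjoint on $\Fock_X$, so
\[
\Psi_P(\Theta_{\xi,\eta}) = P\Theta_{\xi,\eta}P = \Theta_{P\xi, P\eta}\quad\text{and}\quad P(\xi \cdot a) = (P\xi) \cdot a
\]
for every $\xi, \eta \in \Fock_X$ and $a \in A$.

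For the forward containment $\Psi_P(\End_A^0(\Fock_X \cdot J_X)) \subseteq \End_A^0(\Fock_Y \cdot J_Y)$, I would take a typical rank-one generator $\Theta_{\xi \cdot a, \eta \cdot b}$ with $\xi, \eta \in \Fock_X$ and $a, b \in J_X$, and use the formula above to get $\Psi_P(\Theta_{\xi \cdot a, \eta \cdot b}) = \Theta_{(P\xi) \cdot a, (P\eta) \cdot b}$. The vectors $P\xi, P\eta$ lie in $\Fock_Y$, and, using $J_X = J_Y$, we have $a, b \in J_Y$; so the compressed operator is a rank-one generator of $\End_A^0(\Fock_Y \cdot J_Y)$. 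Linearity and norm-continuity of $\Psi_P$ then complete this direction.

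For the reverse containment, I would take a rank-one generator $\Theta_{\eta \cdot a, \eta' \cdot b}$ of $\End_A^0(\Fock_Y \cdot J_Y)$ with $\eta, \eta' \in \Fock_Y$ and $a, b \in J_Y = J_X$. Regarding $\eta, \eta'$ as vectors of $\Fock_X$ via the inclusion $\Fock_Y \hookrightarrow \Fock_X$, the same rank-one operator becomes a generator of $\End_A^0(\Fock_X \cdot J_X)$; a short check, using that $\eta' \cdot b \in \Fock_Y$ is orthogonal to $\Fock_Y^\perp$, identifies this extension with $\beta(\Theta_{\eta \cdot a, \eta' \cdot b})$ for the inclusion $\beta$ of \labelcref{eq:beta}. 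Since $P$ restricts to the identity on $\Fock_Y$, we have $P(\eta \cdot a) = \eta \cdot a$ and $P(\eta' \cdot b) = \eta' \cdot b$, so $\Psi_P$ fixes this generator. Hence every generator of $\End_A^0(\Fock_Y \cdot J_Y)$ lies in the image of $\Psi_P|_{\End_A^0(\Fock_X \cdot J_X)}$, giving equality.

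The result is essentially a computation, so the main thing to watch is that the hypothesis $J_X = J_Y$ is consumed in both directions: $J_X \subseteq J_Y$ is what keeps the compressed generators inside $\End_A^0(\Fock_Y \cdot J_Y)$, while $J_Y \subseteq J_X$ is what allows rank-one generators of $\End_A^0(\Fock_Y \cdot J_Y)$ to be viewed inside $\End_A^0(\Fock_X \cdot J_X)$ (via $\beta$) in the first place.
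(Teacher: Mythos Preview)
Your proof is correct and follows essentially the same approach as the paper: compute $\Psi_P$ on rank-one generators via $P\Theta_{\xi,\eta}P=\Theta_{P\xi,P\eta}$, use right $J_X$-linearity of $P$ together with $J_X=J_Y$ for the forward inclusion, and use that $P$ fixes $\Fock_Y\cdot J_Y$ (equivalently, $P$ is surjective onto $\Fock_Y\cdot J_Y$) for the reverse inclusion. The paper's version is terser---it dispatches the reverse containment in one phrase (``since $P$ is surjective'')---whereas you spell out the preimage explicitly via $\beta$, but the content is the same.
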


\begin{proof}
	Since $P$ is right $J_X$-linear, we have $\Psi_P(\Theta_{\xi,\eta}) = \Theta_{P\xi,P\eta} \in \End_A^0(\Fock_Y \cdot J_Y)$ for all $\xi,\eta \in \Fock_X \cdot J_X$. Since $P$ is surjective $\Psi_P(\End_A^0(\Fock_X \cdot J_X))=\End_A^0(\Fock_Y \cdot J_Y)$.
\end{proof}

To use \Cref{lem:quotient-postitive-map}, we need the existence of a completely positive splitting $\cO_X \to \cT_X$. 
By the Choi--Effros lifting theorem~\cite[Theorem C.3.]{Brown-Ozawa} such a splitting exists whenever $\cO_X$ is nuclear. 
A sufficient condition for nuclearity of $\cO_X$ is that the coefficient algebra $A$ is nuclear \cite[Corollary 7.4]{KatsuraCPalg}.
The resulting strict completely positive map $\wt{\Psi}_P$ is typically not a conditional expectation itself since there is no obvious choice for an inclusion of $\cO_Y$ as a subalgebra of $\cO_X$.

	 By applying \cref{lem:quotient-postitive-map} the map $\Psi_P \colon \cT_X\to \cT_Y$ passes to the Cuntz--Pimsner quotients.
\begin{thm}
\label{thm:CP-pos-coz}
Let $\cor{\phi_Y}{A}{Y}{A}$ be a complemented sub-$A$--$A$-correspondence of $\cor{\phi_X}{A}{X}{A}$ with $J_X = J_Y$ (for example if $(\phi_{Y^{\perp}}, {}_A Y_A^{\perp})$ is regular), and let $\Psi_P\colon \cT_X\to\cT_Y$ be the conditional expectation of \cref{lem:fock-expectation}. Further suppose that there is a completely positive splitting $\cO_X \to \cT_X$ (for example if  $A$ is nuclear). 
Then there is a strict completely positive map $\wt{\Psi}_P\colon \cO_X\to \cO_Y$.
\end{thm}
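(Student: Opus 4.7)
The plan is to apply \cref{lem:quotient-postitive-map} directly, fitting $\Psi_P$ into the commuting diagram formed by the defining extensions \labelcref{eq:starting-sequence} of $\cO_X$ and $\cO_Y$:
\[
\begin{tikzcd}[column sep=small]
0 \arrow[r] & \End_A^0(\Fock_X \cdot J_X) \arrow[r] \arrow[d, "\Psi_P|"] & \cT_X \arrow[d, "\Psi_P"] \arrow[r] & \cO_X \arrow[r] & 0 \\
0 \arrow[r] & \End_A^0(\Fock_Y \cdot J_Y) \arrow[r] & \cT_Y \arrow[r] & \cO_Y \arrow[r] & 0
\end{tikzcd}
\]
The resulting map $\wt{\Psi}_P \colon \cO_X \to \cO_Y$ is then given on a class $a + \End_A^0(\Fock_X \cdot J_X)$ by sending it to $\Psi_P(a) + \End_A^0(\Fock_Y \cdot J_Y)$.

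To invoke \cref{lem:quotient-postitive-map} I would verify its four hypotheses in turn. First, the rows are exact by definition of the Cuntz--Pimsner algebras. Second, the top row admits a completely positive splitting by hypothesis, a condition which is automatic via the Choi--Effros lifting theorem when $A$ is nuclear, since $\cO_X$ is then nuclear by \cite[Corollary 7.4]{KatsuraCPalg}. Third, the map $\Psi_P$ is completely positive because it is a conditional expectation by \cref{lem:fock-expectation}, and it is strict by the Tomiyama-theorem argument recorded in \cref{subsec:KSGNS}. Finally, the containment $\Psi_P(\End_A^0(\Fock_X \cdot J_X)) \subseteq \End_A^0(\Fock_Y \cdot J_Y)$, which both makes the left-hand square commute and guarantees the induced map is well defined, is precisely \cref{lem:fock-projection-compacts}; this is the point where the hypothesis $J_X = J_Y$ (guaranteed for instance by regularity of $(\phi_{Y^{\perp}}, {}_A Y_A^{\perp})$ via \cref{lem:nice_complement_good_ideals}) enters the argument.

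With all hypotheses in place, \cref{lem:quotient-postitive-map} immediately yields the desired strict completely positive map $\wt{\Psi}_P \colon \cO_X \to \cO_Y$. I do not expect a genuine obstacle here, as the substantive work has already been done in the preceding lemmas and the theorem is essentially a bookkeeping application that assembles those results. The only point deserving explicit mention in the writeup is the strictness of $\Psi_P$, but this follows from the standard fact that every conditional expectation is strict, as discussed earlier in \cref{subsec:KSGNS}.
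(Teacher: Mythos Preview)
Your proposal is correct and matches the paper's approach exactly: the paper simply states that by applying \cref{lem:quotient-postitive-map} the expectation $\Psi_P$ passes to the Cuntz--Pimsner quotients, with \cref{lem:fock-projection-compacts} supplying the ideal containment under the hypothesis $J_X = J_Y$. Your write-up spells out the verification of the hypotheses in slightly more detail than the paper, but the argument is identical.
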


\begin{corl}\label{corl:morphisms_from_projection}
	With the setup of \cref{thm:CP-pos-coz} we have the following:
	\begin{enumerate}[label=(\roman*), ref=(\roman*)]
		\item $(\wt{\Psi}_P,{}_{\cO_X}(\cO_Y)_{\cO_Y})$ is a positive correspondence, defining a morphism in $\Quesa$ from $\cO_X$ to $\cO_Y$;
		\item the KSGNS space $(\pi_{\wt{\Psi}_P}, \cO_X \ox_{\wt{\Psi}_P} \cO_Y)$ defines a morphism in $\Ench$ from $\cO_X$ to $\cO_Y$; and
		\item \cref{lem:quotient-expectation} implies that
		$
		(\pi_{\wt{\Psi}_P}, \cO_X \ox_{\wt{\Psi}_P} \cO_Y)\cong (\Id_{\cO_X},L^2_{\cO_Y}(\cO_X,\wt{\Psi}_P)).
		$
	\end{enumerate}
\end{corl}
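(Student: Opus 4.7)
The three parts of the corollary should unspool as progressively deeper applications of the machinery already in place. For part (i), the plan is to simply unpack definitions. Theorem \ref{thm:CP-pos-coz} delivers a strict completely positive map $\wt{\Psi}_P \colon \cO_X \to \cO_Y$. Viewing $\cO_Y$ as a right module over itself with $\End_{\cO_Y}(\cO_Y) = \Mult(\cO_Y)$, the pair $(\wt{\Psi}_P,{}_{\cO_X}(\cO_Y)_{\cO_Y})$ satisfies \cref{def:pozzy-cozzy} verbatim, and its isomorphism class is, by \cref{def:semi-category}, a morphism in $\Quesa$ from $\cO_X$ to $\cO_Y$.

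For part (ii), the semi-functor $\KSGNS$ of \cref{prop:ksgnsfunctor} sends morphisms of $\Quesa$ to morphisms of $\Ench$. Applying it to the class of $(\wt{\Psi}_P,{}_{\cO_X}(\cO_Y)_{\cO_Y})$ produces, by definition of the KSGNS construction, the nondegenerate $\cO_X$-$\cO_Y$-correspondence $(\pi_{\wt{\Psi}_P}, \cO_X \ox_{\wt{\Psi}_P} \cO_Y)$, which is immediately the desired morphism in $\Ench$.

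Part (iii) is the content-heavy step, and the plan is to verify the hypotheses of \cref{lem:quotient-expectation} for the data already assembled. The conditional expectation is $\Psi_P \colon \cT_X \to \cT_Y$ from \cref{lem:fock-expectation}; the relevant inclusion is $\alpha \colon \cT_Y \to \cT_X$ defined in \labelcref{eq:alpha}, which is injective and, by its universal construction from the inclusions of the generators together with $\Id_A$, restricts to the identity on the common coefficient copy of $A$ and is therefore nondegenerate. The ideals are $I = \End_A^0(\Fock_X \cdot J_X)$ and $J = \End_A^0(\Fock_Y \cdot J_Y)$; the containment $\Psi_P(I) \subseteq J$ is exactly \cref{lem:fock-projection-compacts}; and the quotient map of \cref{lem:quotient-postitive-map} applied to $\Psi_P$ is $\wt{\Psi}_P$ by the very construction used in \cref{thm:CP-pos-coz}. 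With all hypotheses in place, \cref{lem:quotient-expectation} delivers precisely
\enveqn{
\KSGNS(\wt{\Psi}_P,{}_{\cO_X}(\cO_Y)_{\cO_Y}) \cong (\Id_{\cO_X}, L^2_{\cO_Y}(\cO_X,\wt{\Psi}_P)),
}
and the left side coincides with $(\pi_{\wt{\Psi}_P}, \cO_X \ox_{\wt{\Psi}_P} \cO_Y)$ by the definition of $\KSGNS$ on morphisms.

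The main obstacle I expect is the verification of nondegeneracy of $\alpha$, since \cref{ex:covariance_fails} shows that $\alpha$ is a somewhat delicate map that does not descend to Cuntz--Pimsner quotients. Fortunately, \cref{lem:quotient-expectation} only uses $\alpha$ at the level of $\cT_Y \hookrightarrow \cT_X$, where nondegeneracy reduces to checking that $\alpha(\cT_Y)\cT_X$ is dense in $\cT_X$; this is secured by $\alpha$ restricting to the identity on the common copy of $A$, which itself non-degenerately generates $\cT_X$. Everything else in the plan is either cited or routine.
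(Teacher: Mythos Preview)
Your proposal is correct and matches the paper's (implicit) approach: the paper offers no separate proof of this corollary, treating all three parts as immediate consequences of \cref{thm:CP-pos-coz}, \cref{prop:ksgnsfunctor}, and \cref{lem:quotient-expectation} respectively. Your verification of the hypotheses of \cref{lem:quotient-expectation}, including the nondegeneracy of $\alpha$ via the common copy of $A$, supplies exactly the detail the paper omits.
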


A direct application of \cref{lem:compose-expectation-quot}  gives the following description of composition. 
\begin{corl} 
\label{cor:sigh}
	Suppose that $(\phi_X, {}_A X_A) \overset{P_0}{\to} (\phi_Y,{}_A Y_A)  \overset{Q_0}{\to} (\phi_Z,{}_A Z_A)$ are correspondence projections
	that induce correspondence projections $(\phi_X, \Fock_X) \overset{P}{\to} (\phi_Y, \Fock_Y) \overset{Q}{\to} (\phi_Z, \Fock_Z)$. 
    Suppose that $J_Z = J_Y = J_Z$ and that there are completely positive splittings $\cO_X \to \cT_X$ and $\cO_Y \to \cT_Y$.  Then there is an isometric inclusion of correspondences
	\[
	\psi \colon (\cO_X \ox_{\wt{\Psi}_P} \cO_Y) \ox ( \cO_Y \ox_{\wt{\Psi}_Q} \cO_Z ) \to \cO_X \ox_{\wt{\Psi}_{QP}} \cO_Z.
	\]	
	Moreover, if there is a completely positive splitting $s_{Y} \colon \cO_Y \to \cT_Y$ with the additional property that 
    the subspace $\alpha \circ s_Y(\cO_Y) + \End_A^{0}(\Fock_X \cdot J_X) $ contains an approximate unit for $\cO_X$, then $\psi$ is also surjective.
\end{corl}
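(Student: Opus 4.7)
The strategy is to invoke \cref{lem:compose-expectation-quot} directly, using the defining exact sequences \labelcref{eq:def-ext} for $\cO_X$, $\cO_Y$, and $\cO_Z$ as the three rows of its input diagram, with the conditional expectations $\Psi_P \colon \cT_X \to \cT_Y$ and $\Psi_Q \colon \cT_Y \to \cT_Z$ from \cref{lem:fock-expectation} providing the vertical arrows.

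Before applying the lemma, I would verify its hypotheses. The inclusion $\alpha_P \colon \cT_Y \to \cT_X$ corresponding to the conditional expectation $\Psi_P$ is the $*$-homomorphism $\alpha$ from \labelcref{eq:alpha}; it is nondegenerate because $\alpha_P|_A = \Id_A$ and $A$ sits nondegenerately inside both $\cT_X$ and $\cT_Y$. The map $\Psi_Q$ is itself a conditional expectation, hence a strict completely positive map. The hypothesis $J_X = J_Y = J_Z$ combined with \cref{lem:fock-projection-compacts} ensures that $\Psi_P$ carries $\End_A^{0}(\Fock_X \cdot J_X)$ into $\End_A^{0}(\Fock_Y \cdot J_Y)$, and similarly for $\Psi_Q$. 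The completely positive splittings of the quotients $q_X$ and $q_Y$ are provided by assumption. \Cref{lem:quotient-postitive-map} then supplies the induced strict completely positive maps $\wt{\Psi}_P$ and $\wt{\Psi}_Q$, and \cref{lem:compose-expectation-quot} delivers an isometric inclusion
\[
\psi \colon (\cO_X \ox_{\wt{\Psi}_P} \cO_Y) \ox_{\cO_Y} (\cO_Y \ox_{\wt{\Psi}_Q} \cO_Z) \to \cO_X \ox_{\wt{\Psi}_Q \circ \wt{\Psi}_P} \cO_Z.
\]

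To identify the codomain with $\cO_X \ox_{\wt{\Psi}_{QP}} \cO_Z$ as stated, I would verify the equality $\wt{\Psi}_Q \circ \wt{\Psi}_P = \wt{\Psi}_{QP}$. This reduces to the Toeplitz-level identity $\Psi_Q \circ \Psi_P = \Psi_{QP}$: viewing $Q$ as extended by zero from $\Fock_Y$ to $\Fock_X$, for $T \in \cT_X$ we compute $\Psi_Q(\Psi_P(T)) = Q(PTP)Q = (QP)T(QP)$, and the composite Fock-space projection $QP \colon \Fock_X \to \Fock_Z$ coincides with the projection induced via \cref{lem:fock-projection} from the module-level composite $Q_0 P_0 \colon X \to Z$. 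Passing to the quotients then gives $\wt{\Psi}_Q \circ \wt{\Psi}_P = \wt{\Psi}_{QP}$, as required.

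For the surjectivity clause, the hypothesis in \cref{lem:compose-expectation-quot} asks that $q_X \circ \alpha_P \circ s_Y(\cO_Y)$ contains an approximate unit for $\cO_X$, and since $q_X$ is the quotient by $\End_A^{0}(\Fock_X \cdot J_X)$ this is exactly the stated condition. The only step requiring genuine verification is the identity $\Psi_Q \circ \Psi_P = \Psi_{QP}$ and the nondegeneracy of $\alpha_P$; every other step is a direct translation of the hypotheses of \cref{lem:compose-expectation-quot} into the Cuntz--Pimsner setting.
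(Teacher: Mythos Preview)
Your proposal is correct and follows essentially the same route as the paper: both invoke \cref{lem:compose-expectation-quot} with the three defining exact sequences \labelcref{eq:def-ext} as rows and the expectations $\Psi_P$, $\Psi_Q$ as vertical maps, citing \cref{thm:CP-pos-coz} (equivalently \cref{lem:fock-projection-compacts}) for the ideal condition. You are in fact more thorough than the paper in spelling out the identification $\wt{\Psi}_Q \circ \wt{\Psi}_P = \wt{\Psi}_{QP}$ via $\Psi_Q \circ \Psi_P = \Psi_{QP}$ and in translating the surjectivity hypothesis precisely.
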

\begin{proof}
    It follows from from \cref{thm:CP-pos-coz} and \cref{lem:compose-expectation-quot} that there is an isometric embedding 
\[
\psi\colon (\cO_X \ox_{\wt{\Psi}_P} \cO_Y) \ox ( \cO_Y \ox_{\wt{\Psi}_Q} \cO_Z ) \to \cO_X \ox_{\wt{\Psi}_{PQ}} \cO_Z.
\]	
If $s\colon \cO_X \to T_X$ is a nondegenerate splitting of the canonical quotient map $\T_X \to \cO_X$ 
(if the \mbox{$C^*$-algebras} are unital, then $s$ can be chosen to be unital), 
then there is an approximate unit $(e_i)_i$ in $\cO_X$ such that $(s(e_i))_i$ converges strictly to the identity in $\Mult(T_Y)$.
Then $(s(e_i))_i$ is an approximate unit for $\T_X$ and it follows from the last part of \cref{lem:compose-expectation-quot} that $\psi$ is an isomorphism.
\end{proof}

\begin{rmk}
  The isometric inclusion $\psi$ of \cref{cor:sigh} is an isomorphism when $\T_Y$ and $\cO_Y$ are unital,
  because the Choi--Effros lifting $s_Y$ can be chosen to be unital,
  and we suspect that $\psi$ is always an isomorphism.
\end{rmk}

Since the KSGNS construction is fairly explicit, it is often possible to get one's hands on the module $\cO_X \ox_{\wt{\Psi}_P} \cO_Y$ in examples. 
One class of examples arises from graph \mbox{$C^*$-algebras}.

\begin{example}
Let $E = (E^0,E^1,r,s)$ be a directed graph.  
We refer to \cite[Chapter 8]{Raeburn} for the description of the \emph{graph $C^*$-algebra} $C^*(E)$ as the Cuntz--Pimsner $\cO_{X(E)}$ of the graph correspondence $X(E)$.

Suppose that $F = (F^0,F^1,r,s)$ and $E \setminus F \coloneqq (E^0, E^1 \setminus F^1,r,s)$  are subgraphs of $E$ such that $E^0 = F^0$. Further, suppose that $E \setminus F$ is a \emph{regular} subgraph in the sense that it contains no sources or infinite receivers. 
 In particular, $E \setminus F$ must be a ``large'' subgraph of $E$ since isolated vertices are sources.
Then the graph correspondence $X(E)$ splits into a direct sum $X(E) = X(F) \oplus X(E \setminus F)$ of correspondences over $C_0(E^0)$.
Since $X(E\setminus F)$ is regular, \Cref{thm:CP-pos-coz} provides a $C^*(E)$--$C^*(F)$-correspondence.

To be more explicit, let $\{S_\mu \colon \mu \in E^*\}$ be the generating partial isometries of $C^*(E)$ and let $\{W_\mu \colon \mu \in F^*\}$ be the generating partial isometries of $C^*(F)$. 
For $\mu,\nu \in E^*$ we have
\[
    \wt{\Psi}_P (S_{\mu}S_{\nu}^*) = 
    \begin{cases}
        W_{\mu}W_{\nu}^* &\text{if } \mu,\nu \in F^*\\
        0 &\text{otherwise}.
    \end{cases}
\]
In the KSGNS module $C^*(E) \ox_{\wt{\Psi}_P} C^*(F)$, the inner product satisfies
\begin{align*}
    \langle S_\mu S_\nu^* \ox W_\xi W_\eta^* \mid S_{\alpha} S_{\beta}^* \ox W_{\rho} W_{\sigma}^* \rangle_{C^*(F)}
    &= W_\eta W_\xi^* \wt{\Psi}_P(S_\nu S_\mu^* S_\alpha S_\beta^*) W_{\rho} W_{\sigma}^*.
\end{align*}

In particular,
    if $\mu = \alpha$, $\nu = \beta$, $\xi = \rho$, and $\eta = \sigma$, then
\begin{align*}
\langle S_\mu S_\nu^* \ox W_\xi W_\eta^* \mid S_{\mu} S_{\nu}^* \ox  W_\xi W_\eta^*\rangle_{C^*(F)} 
    &= W_\eta W_\xi^* \wt{\Psi}_P(S_\nu S_\mu^* S_\mu S_\nu^*) W_{\xi} W_{\eta}^* \\
    &= \begin{cases}
      W_\eta W_\xi^* W_{\nu} W_{\nu}^* W_\xi W_\eta^* & \text{if } \nu \in F^*         \\
      0 & \text{otherwise}
    \end{cases} \\
    &=
    \begin{cases}
        W_{\eta\nu'} W_{\eta \nu'}^* & \text{if } \nu = \xi \nu' \text{ and } \nu \in F^* \\
        W_{\eta}W_{\eta}^* & \text{if } \xi = \nu \xi' \text{ and } \nu \in F^*\\
        0 & \text{otherwise}.
    \end{cases}
\end{align*}	
Therefore, a necessary condition for $S_\mu S_\nu^* \ox W_\xi W_\eta^* \in C^*(E) \ox_{\wt{\Psi}_P} C^*(F)$ to be nonzero is  
that $\nu \in F^*$ and that $\xi$ extends $\nu$ (or vice versa).

We claim that $C^*(E) \ox_{\wt{\Psi}_P} C^*(F)$ is isomorphic to $\Fock_{X(E)}\ox_{C_0(E^0)} C^*(F)$ as right $C^*(F)$-modules. Recall that $E^k$ denotes the paths of length $k$ in $E$, and each $k \ge 0$ we can identify $X(E)^{\ox k}$ with a completion of $C_c(E^k)$. With this identification, for each $\mu \in E^*$ we let $\delta_\mu \in \Fock_{X(E)}$ denote the point mass at $\mu$ in the completion. 
In $\Fock_{X(E)}\ox_{C_0(E^0)} C^*(F)$ we calculate the inner product 
\begin{align*}
  \langle \delta_\mu \ox W_\nu^*W_\xi W_\eta^* \mid \delta_\mu \ox W_\nu^*W_\xi W_\eta^* \rangle_{C^*(F)}
  &= \langle W_\nu^*W_\xi W_\eta^* \mid P_{s(\mu)} W_\nu^*W_\xi W_\eta^* \rangle_{C^*(F)} \\
  &= \begin{cases}
  	W_\eta W^*_\xi W_\nu W_\nu^*W_\xi W_\eta^* & \text{if } s(\mu) = s(\nu)\\
  	0 & \text{otherwise}.
  \end{cases}
\end{align*}
This means that there is an isometric linear map $\kappa\colon C^*(E)\ox_{\tilde{\Psi}} C^*(F) \to \Fock_{X(E)}\ox_{C_0(F^0)} C^*(F)$ given by 
$\kappa(S_\mu S_\nu \ox W_\xi W_\eta^*) = \delta_\mu \ox W_\nu^* W_\xi W_\eta^*$, for every $\mu\in E^*$ and $\nu, \xi, \eta\in F^*$.
The target $\Fock_{X(E)}\ox_{C_0(F^0)} C^*(F)$ is generated by elements of the form $\delta_\mu \ox W_\xi W_\eta^*$, and this is nonzero only if $r(\xi) = s(\mu)$.
Choosing $\nu = s(\mu)$ so that $S_\nu = P_{s(\mu)}$, we see that $\kappa(S_\mu S_\nu \ox W_\xi W_\eta^*) = \delta_\mu \ox W_\xi W_\eta^*$.
This shows that $\kappa$ is surjective. 

The map $\kappa$ clearly preserves the right action of $C^*(F)$ and a routine computation shows that $\kappa$ preserves inner products. 
Define $\rho \colon C^*(E) \to \End_{C^*(F)} (\Fock_{X(E)}\ox_{C_0(F^0)} C^*(F))$ by
\[
\rho(S_{\alpha}S_{\beta}^*) ( \delta_\mu \ox W_\nu^*W_\xi W_\eta^*)
= \begin{cases}
  	\delta_{\alpha \mu'} \ox W_\nu^*W_\xi W_\eta^* & \text{if }  \mu = \beta \mu'\\
  	\delta_{\alpha} \ox W_{\nu \beta'}^* W_{\xi} W_{\eta}^* & \text{if } \beta = \mu \beta' \text{ and } \beta' \in F^* \\
	0 & \text{otherwise}.
\end{cases}
\]
Comparing with the (positive) left action on the KSGNS module completes the isomorphism of correspondences.

For a particular instance, suppose that $E^{0} = \{v\}$ and that there are $n$ edges (which must have source and range $v$). 
Let $F$ be the subgraph consisting of $m \le n$ edges. 
Then $E \setminus F$ is regular, $X(E) \cong \bC^n$, and $X(F) \cong \bC^n$.
The graph algebra $C^*(E)$ is isomorphic to the Cuntz algebra $\cO_n$, and $C^*(F)\cong \cO_m$. 
Our construction provides a positive $\cO_n$--$\cO_m$-correspondence with underlying $\cO_m$-module isomorphic to $\Fock_{\bC^n} \ox_{\bC} \cO_m$.
Note that $\cO_n$ typically does not act by adjointable operators on $\Fock_{\bC^n}$, but it does act on the module $\Fock_{\bC^n} \ox_{\bC} \cO_m$. 
\end{example}

\subsection{Fock projections and subproduct systems}
\label{sub:Fock-proj}

The previous subsection started from a correspondence projection, then built a correspondence projection on the associated Fock module, and finally constructed a positive map.
In certain circumstances we can start directly with a projection 
on the Fock module and obtain sensible morphisms 
on the associated Cuntz--Pimsner algebras.

\begin{defn}
Let $(\phi,{}_AX_A)$ be a correspondence, and let 
$Q_n \colon \Fock_X\to X^{\ox n}$ be the projection onto the $n$-th summand. 
A \emph{Fock projection} $P\in\End_A(\Fock_X)$ is
a projection which commutes with the left action of $A$ on $\Fock_X$, and $Q_n$ for all $n \ge 0$. We write $P_n\coloneqq Q_nP$ and note that the projection $P$ on $\Fock_X$ is equal to the strict sum $\sum_{n=0}^\infty P_n$.
\end{defn}

\begin{example}
A correspondence projection $P_0\colon (\phi_X,{}_AX_A)\to (\phi_Y,{}_AY_A)$ induces a Fock projection as in \cref{lem:fock-projection}.
\end{example}

\begin{lemma}
Let $(\phi,{}_A X_A)$ be a correspondence and let $P\in\End_A(\Fock_X)$
be a Fock projection. The map $\Psi_P \colon \End_A(\Fock_X) \to \End_A(P\Fock_X)$ given by $\Psi_P(T) = PTP$ is a conditional expectation and restricts to
a surjective completely positive map from
$\End_A^0(\Fock_X \cdot J_X)$ to $\End_A^0(P\Fock_X \cdot J_X)$.
\end{lemma}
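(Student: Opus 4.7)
The plan has two parts: first verify that $\Psi_P$ is a conditional expectation onto (the image of) $\End_A(P\Fock_X)$, and then analyse its restriction to the compacts on the submodule $\Fock_X \cdot J_X$.

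For the first part, I would work with the orthogonal direct-sum decomposition $\Fock_X = P\Fock_X \oplus (1-P)\Fock_X$ supplied by the Fock projection. Under this decomposition, $\End_A(P\Fock_X)$ embeds into $\End_A(\Fock_X)$ as operators of the form $S \oplus 0$, and any such $S$ satisfies $PS = SP = S$. Complete positivity of $\Psi_P$ is immediate since $T \mapsto PTP$ is a compression by a projection. Idempotence follows from $P^2 = P$, so $\Psi_P \circ \Psi_P (T) = P(PTP)P = PTP$, and $\Psi_P$ evidently fixes the image $\End_A(P\Fock_X)$. The bimodule property over the image is obtained from the identity $\Psi_P(S_1 T S_2) = P S_1 T S_2 P = S_1 (PTP) S_2 = S_1 \Psi_P(T) S_2$, which uses only $PS_i = S_i = S_iP$. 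Tomiyama's theorem (or just combining these facts) then confirms $\Psi_P$ is a conditional expectation.

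For the restriction to compacts, the key computation is that for $\xi,\eta \in \Fock_X$,
\[
\Psi_P(\Theta_{\xi,\eta}) = \Theta_{P\xi,\,P\eta},
\]
which follows at once from self-adjointness of $P$ and the defining formula $\Theta_{\xi,\eta}(z) = \xi \cdot \langle \eta \mid z\rangle_A$. Now $P \in \End_A(\Fock_X)$ is right $A$-linear, hence in particular right $J_X$-linear, so $P$ preserves the submodule $\Fock_X \cdot J_X$. Consequently, whenever $\xi,\eta \in \Fock_X \cdot J_X$, we have $P\xi, P\eta \in P\Fock_X \cdot J_X$, and so $\Psi_P$ sends the dense spanning set of rank-one operators $\Theta_{\xi,\eta}$ in $\End_A^0(\Fock_X \cdot J_X)$ into $\End_A^0(P\Fock_X \cdot J_X)$. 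Continuity and linearity then promote this to the full algebra.

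For surjectivity, any rank-one operator $\Theta_{\zeta,\zeta'}$ generating $\End_A^0(P\Fock_X \cdot J_X)$ has $\zeta, \zeta' \in P\Fock_X \cdot J_X \subseteq \Fock_X \cdot J_X$, so it also lives in $\End_A^0(\Fock_X \cdot J_X)$. Since $P\zeta = \zeta$ and $P\zeta' = \zeta'$, the formula above yields $\Psi_P(\Theta_{\zeta,\zeta'}) = \Theta_{\zeta,\zeta'}$, so every generator of $\End_A^0(P\Fock_X \cdot J_X)$ lies in the image. Density then gives surjectivity onto the target. I do not anticipate a real obstacle here: the entire content of the lemma is that compression by a Fock projection commutes with the right $J_X$-module structure, and the only care needed is in keeping track of the two Fock modules and confirming that $\Psi_P$ really is an expectation onto $\End_A(P\Fock_X)$ rather than merely a ucp map.
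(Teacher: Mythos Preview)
Your proposal is correct and follows essentially the same approach as the paper, which simply refers back to the proofs of the earlier \cref{lem:fock-expectation,lem:fock-projection-compacts}: compression by $P$ gives a conditional expectation, and the rank-one identity $\Psi_P(\Theta_{\xi,\eta}) = \Theta_{P\xi,P\eta}$ together with right $J_X$-linearity of $P$ handles both containment and surjectivity on compacts. Your write-up is in fact more self-contained than the paper's one-line deferral.
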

\begin{proof}
  This result follows from the proof of \cref{lem:fock-expectation,lem:fock-projection-compacts} with minor modification.
\end{proof}

If we restrict $\Psi_P$ to a subalgebra of $\End_A(\Fock_X)$, such as $\cT_X$, we get a 
completely positive map onto \emph{some} subalgebra of 
$\End_A(P\Fock_X)$. 
\begin{defn}
Let $(\phi,{}_A X_A)$ be a correspondence, and let $P\in\End_A(\Fock_X)$
be a Fock projection. For $\xi\in P\Fock_X$ we define a \emph{projected creation operator} $T^P_\xi\coloneqq PT_\xi$, where $T_\xi \in \cT_X$. Then $T^P_\xi$ is adjointable with adjoint $(T^P_\xi)^*=T^*_\xi P$. 

We define the \emph{projected Toeplitz algebra} $\cT^P_X$ of $P\Fock_X$ to be the \mbox{$C^*$-algebra} generated by $\{T^P_\xi \mid \xi\in P\Fock_X\}$. We define the \emph{projected Cuntz--Pimsner algebra} $\cO^P_X$ of $P\Fock_X$ to be the quotient of $\cT^P_X$ by the ideal
\[
\End^0_A(P\Fock_X\cdot J_X)\cap \cT^P_X.
\]
\end{defn}

By construction, $\Psi_P \colon \cT_X \to \cT_X^P$ is a conditional expectation. 
Building on similar ideas to the previous section, we use \cref{lem:quotient-postitive-map} to construct a strict completely positive map between a Cuntz--Pimsner algebra and its projected relative. 

By applying \cref{lem:quotient-postitive-map} the map $\Psi_P\colon \cT_X\to\cT_X^P$ passes to the Cuntz--Pimsner quotients. 
\begin{prop}
\label{prop:Fock-on}
Let $(\phi,{}_A X_A)$ be a \mbox{$C^*$-correspondence} such that there is a completely positive splitting $\cO_X \to \cT_X$ (for example if $A$ is nuclear). Let $P\in\End_A(\Fock_X)$
be a Fock projection. Then there is a surjective strict completely positive map
$
\wt{\Psi}_P:\cO_X\to \cO_X^P.
$
\end{prop}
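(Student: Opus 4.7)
My plan is to apply \cref{lem:quotient-postitive-map} to the commuting diagram of extensions
\[
    \begin{tikzcd}
        0 \arrow[r] & \End_A^0(\Fock_X \cdot J_X) \arrow[r] \arrow[d, "\Psi_P|"]  & \cT_X \arrow[d, "\Psi_P"] \arrow[r] & \cO_X \arrow[r]  & 0 \\
        0 \arrow[r] & \End_A^0(P\Fock_X \cdot J_X) \cap \cT_X^P \arrow[r]           & \cT_X^P \arrow[r]            & \cO_X^P \arrow[r]                   & 0
    \end{tikzcd},
\]
using the completely positive splitting $\cO_X \to \cT_X$ that is provided by hypothesis. So first I would verify that this diagram meets the standing assumptions of \cref{lem:quotient-postitive-map}, namely that $\Psi_P \colon \cT_X \to \cT_X^P$ is a strict completely positive map whose restriction to $\End_A^0(\Fock_X \cdot J_X)$ lands inside the intersection $\End_A^0(P\Fock_X \cdot J_X) \cap \cT_X^P$.

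The ideal-preservation statement is the content of the unnamed lemma immediately preceding the proposition, which says that $\Psi_P \colon \End_A(\Fock_X) \to \End_A(P\Fock_X)$ restricts to a \emph{surjective} completely positive map $\End_A^0(\Fock_X \cdot J_X) \to \End_A^0(P\Fock_X \cdot J_X)$; combined with the fact that $\Psi_P(\cT_X) \subseteq \cT_X^P$, this places $\Psi_P(\End_A^0(\Fock_X \cdot J_X))$ inside the required intersection (and indeed realises it as $\End_A^0(P\Fock_X \cdot J_X) \cap \cT_X^P = \End_A^0(P\Fock_X \cdot J_X)$ after noting the compacts already sit in $\cT_X^P$ by the surjectivity). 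For strictness, I would observe that the operator compression $T \mapsto PTP$ is strictly continuous on the norm-bounded subsets of $\End_A(\Fock_X)$, so that if $(a_i)$ is any approximate unit for $\cT_X$ whose image is strictly convergent in $\End_A(\Fock_X)$ then $(\Psi_P(a_i))$ is strictly Cauchy in $\cT_X^P$; in the idempotent-contractive formulation this is essentially the Tomiyama-style computation recorded earlier in the excerpt.

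With all hypotheses verified, \cref{lem:quotient-postitive-map} delivers a strict completely positive map $\wt{\Psi}_P \colon \cO_X \to \cO_X^P$ characterised by $\wt{\Psi}_P(T + \End_A^0(\Fock_X \cdot J_X)) = \Psi_P(T) + \End_A^0(P\Fock_X \cdot J_X) \cap \cT_X^P$. Surjectivity of $\wt{\Psi}_P$ is then inherited from surjectivity of $\Psi_P \colon \cT_X \to \cT_X^P$: the $C^*$-algebra $\cT_X^P$ is generated by the projected creation operators $T_\xi^P = PT_\xi$ (for $\xi \in P\Fock_X$), each of which is the image under $\Psi_P$ of an element of $\cT_X$, and chasing around the commuting diagram produces a preimage in $\cO_X$ for any class in $\cO_X^P$.

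The step I expect to require the most care is the bookkeeping around the ideal: ensuring that $\End_A^0(P\Fock_X \cdot J_X)$ is genuinely inside $\cT_X^P$ (so that the intersection equals the whole of $\End_A^0(P\Fock_X \cdot J_X)$ and the bottom row is truly the defining Cuntz--Pimsner extension of $\cT_X^P$). This is needed to identify the quotient $\cT_X^P / (\End_A^0(P\Fock_X \cdot J_X) \cap \cT_X^P)$ with $\cO_X^P$ as defined in the paper and is where one must use that $P$ commutes with the left action and with each $Q_n$, so that the rank-one operators on $P\Fock_X \cdot J_X$ are norm limits of polynomials in the generators of $\cT_X^P$. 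Everything else is a formal consequence of \cref{lem:quotient-postitive-map} together with the preceding lemma.
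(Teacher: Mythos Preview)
Your proposal is correct and follows precisely the paper's approach: the paper's proof is the one-line remark immediately preceding the proposition, which simply says to apply \cref{lem:quotient-postitive-map} to pass $\Psi_P\colon \cT_X\to\cT_X^P$ to the Cuntz--Pimsner quotients, with the ideal-preservation supplied by the preceding lemma. Your write-up just fills in the details (strictness via the conditional-expectation property, surjectivity via surjectivity of $\Psi_P$ on generators, and the bookkeeping that $\End_A^0(P\Fock_X\cdot J_X)\subseteq \cT_X^P$) that the paper leaves implicit.
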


The construction of $\cO_X^P$ is closely related to the construction of subproduct systems over $\N$. 
The various definitions found in \cite{SS09,V,DM14,AK} (for instance) can all be seen to be equivalent using \cite[Lemma 6.1]{SS09}.

\begin{defn}
\label{defn:subby}
Let $A$ be a separable and nuclear \mbox{$C^*$-algebra,}  and let $(X_m)_{m\in\N_0}$ be a sequence of nondegenerate  $A$--$A$-correspondences with $X_0=A$. We say that $(X_m)_{m\in\N_0}$ is a \emph{subproduct system} if for all $n,m\in\N_0$ the correspondence $X_{n+m}$ is a complemented sub-correspondence of $X_n\ox_AX_m$.
\end{defn}

One can now deduce the existence of projections $P_k\colon X_1^{\ox_Ak}\to X_k$, cf. \cite[Lemma 6.1]{SS09}.
Setting $P_0={\rm Id}_A$, the strict sum 
\begin{equation}
P\coloneqq\sum_{k\in \N_0}P_k
\label{eq:so-Fock}
\end{equation} 
is a Fock projection on $\Fock_{X_1}$.

Given a subproduct system we can construct the associated Toeplitz and Cuntz--Pimsner algebras. We use the definition of \cite{AK}, which is derived from \cite{V}.

\begin{defn}
Let $X=(X_m)_{m\in\N_0}$ be a subproduct system over a separable and nuclear \mbox{$C^*$-algebra} $A$, and let $P$ be the Fock projection \labelcref{eq:so-Fock}. 
The \emph{Toeplitz algebra} of $X$ is defined to be $\cT^P_{X_1}$. 
Setting $\mathbb{I}=\{T\in\cT^P_X:\,\lim_{m\to\infty}\Vert Q_m T\Vert=0\}$, the \emph{Cuntz--Pimsner} of $X$ is
$\mathbb{O}_X\coloneqq\cT_{X_1}^P/\mathbb{I}$.
\end{defn}

Given $X=(X_m)_m$, the distinction between $\cO_{X_1}^P$ and $\mathbb{O}_X$ is the distinction between the ideals 
$\End^0_A(P \Fock_{X_1}\cdot J_{X_1})$ and $\mathbb{I}$, see
\cite{V}.

\begin{prop}
\label{prop:Fock-subby}
Let $X=(X_m)_{m\in\N_0}$ be a subproduct system over a separable and nuclear \mbox{$C^*$-algebra} $A$, and let $P$ be the Fock projection \labelcref{eq:so-Fock}. 
Then there is a strict completely positive map
$
\Phi_P \colon \cO_{X_1}\to \mathbb{O}_X. 
$
\end{prop}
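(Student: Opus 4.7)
The plan is to build $\Phi_P$ as a composition of two maps. First, since $A$ is nuclear, the Cuntz--Pimsner algebra $\cO_{X_1}$ is nuclear by \cite[Corollary~7.4]{KatsuraCPalg}, so the Choi--Effros lifting theorem provides a completely positive splitting $\cO_{X_1} \to \cT_{X_1}$. Consequently, \cref{prop:Fock-on} applies to the Fock projection $P = \sum_{k \in \N_0} P_k$ defined in \labelcref{eq:so-Fock}, and yields a surjective strict completely positive map $\wt{\Psi}_P \colon \cO_{X_1} \to \cO^P_{X_1}$. By definition $\cT^P_{X_1}$ is precisely the Toeplitz algebra of the subproduct system, and the two \mbox{$C^*$-algebras} $\cO^P_{X_1}$ and $\mathbb{O}_X$ are both quotients of $\cT^P_{X_1}$; the task is therefore to compare the two defining ideals.

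The key step is to verify the ideal inclusion
\enveqn{
\End^0_A(P\Fock_{X_1} \cdot J_{X_1}) \cap \cT^P_{X_1} \;\subseteq\; \mathbb{I}.
}
For a rank-one operator $\Theta_{\xi,\eta}$ with $\xi,\eta \in P\Fock_{X_1} \cdot J_{X_1}$, one has $Q_m \Theta_{\xi,\eta} = \Theta_{Q_m \xi, \eta}$, and since $\xi = \sum_{k \geq 0} P_k \xi$ converges in the $\ell^2$-direct sum norm of $P\Fock_{X_1}$, the tail norm $\|Q_m \xi\|$ tends to zero. Hence $\|Q_m \Theta_{\xi,\eta}\| \leq \|Q_m \xi\|\,\|\eta\| \to 0$. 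Extending to finite-rank operators by the triangle inequality and to compact operators by norm-density gives $\End^0_A(P\Fock_{X_1} \cdot J_{X_1}) \subseteq \mathbb{I}$, so intersecting with $\cT^P_{X_1}$ yields the inclusion.

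This containment delivers a surjective $*$-homomorphism $q \colon \cO^P_{X_1} \to \mathbb{O}_X$, which is automatically strict as a surjection of \mbox{$C^*$-algebras}. Defining
\enveqn{
\Phi_P \coloneqq q \circ \wt{\Psi}_P \colon \cO_{X_1} \longrightarrow \mathbb{O}_X,
}
we obtain a composition of strict completely positive maps between \mbox{$C^*$-algebras}, which is again strict completely positive by the lemma following \cref{defn:nice-pozzies}.

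The only real obstacle is the ideal comparison in the middle step; everything else is formal once \cref{prop:Fock-on} is in hand. Some care is needed because $\mathbb{I}$ is defined via a seemingly weaker condition ($\|Q_m T\| \to 0$) than membership in the compacts on $P\Fock_{X_1}\cdot J_{X_1}$, but the direction we need is the easy one: compactness forces decay along the grading. The reverse containment can fail, which reflects the genuine difference between $\cO^P_{X_1}$ and $\mathbb{O}_X$ noted in the paragraph after \cref{defn:subby}.
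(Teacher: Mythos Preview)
Your proof is correct and follows essentially the same route as the paper: apply \cref{prop:Fock-on} to obtain $\wt{\Psi}_P \colon \cO_{X_1} \to \cO^P_{X_1}$, verify the ideal inclusion $\End^0_A(P\Fock_{X_1}\cdot J_{X_1}) \cap \cT^P_{X_1} \subseteq \mathbb{I}$ so that there is a quotient $*$-homomorphism $\cO^P_{X_1} \to \mathbb{O}_X$, and compose. Your justification of the ideal inclusion via decay of $\|Q_m\xi\|$ on rank-one operators is in fact more detailed than the paper's, which simply asserts that $\|Q_m T\| \to 0$ for compact $T$.
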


\begin{proof}
We have an expectation $\Psi_P \colon \cT_{X_1} \to \cT_{X_1}^P$ which, by  \cref{prop:Fock-on}, descends to a strict completely positive map $\wt{\Psi}_P\colon \cO_X\to \cO_X^P$. Since $\End^0_A(P\Fock_{X_1}\cdot J_{X_1})P\subseteq \cT^P_{X_1}$ and as 
$\Vert Q_mT\Vert\to 0$  for every $T\in \End^0_A(P\Fock_{X_1}\cdot J_{X_1})P$, we have $\End^0_A(P\Fock_{X_1}\cdot J_{X_1})\subset\mathbb{I}$. Consequently, the map $\lambda \colon \cO_X^P \to \mathbb{O}_X$ defined by
$\lambda (T + \End^0_A(P\Fock_{X_1}\cdot J_{X_1})) = T + \mathbb{I}$ is a well-defined surjective $*$-homomorphism. Then $\Phi_P \coloneqq \lambda \circ \wt{\Psi}_P$ is the desired strict completely positive map. 
\end{proof}

\begin{corl}
Let $X=(X_m)_{m\in\N_0}$ be a subproduct system over a separable nuclear \mbox{$C^*$-algebra} $A$, and let $P$ be the Fock projection \labelcref{eq:so-Fock}. Then 
	\begin{enumerate}[label=(\roman*), ref=(\roman*)]
	\item $({\Phi}_P,{}_{\cO_{X_1}}(\mathbb{O}_X)_{\mathbb{O}_X})$ is a positive correspondence, defining a morphism in $\Quesa$ from $\cO_{X_1}$ to $\mathbb{O}_X$, and
	\item the KSGNS space $(\pi_{\Phi_P},\cO_{X_1} \ox_{\Phi_P} \mathbb{O}_X)$ defines a morphism in $\Ench$ from $\cO_{X_1}$ to $\mathbb{O}_X$.
\end{enumerate}
\end{corl}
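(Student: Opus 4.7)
The proof is a direct unpacking of \cref{prop:Fock-subby} through the definitions of \cref{def:pozzy-cozzy,def:semi-category} and \cref{prop:ksgnsfunctor}, so I would present it as an essentially immediate consequence. The plan is as follows.

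First, \cref{prop:Fock-subby} gives a strict completely positive map $\Phi_P \colon \cO_{X_1} \to \mathbb{O}_X$. Regarding $\mathbb{O}_X$ as a right module over itself, and composing $\Phi_P$ with the canonical nondegenerate inclusion $\mathbb{O}_X \hookrightarrow \Mult(\mathbb{O}_X) = \End_{\mathbb{O}_X}(\mathbb{O}_X)$, we obtain a strict completely positive map from $\cO_{X_1}$ to $\End_{\mathbb{O}_X}(\mathbb{O}_X)$. By \cref{def:pozzy-cozzy}, the pair $(\Phi_P,{}_{\cO_{X_1}}(\mathbb{O}_X)_{\mathbb{O}_X})$ is a strict positive correspondence, and its isomorphism class is by definition a morphism in $\Quesa$ from $\cO_{X_1}$ to $\mathbb{O}_X$; this gives (i).

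For (ii), I would simply apply the semi-functor $\KSGNS \colon \Quesa \to \Ench$ of \cref{prop:ksgnsfunctor} to the morphism constructed in (i). Since $\Phi_P$ is strict, the KSGNS construction yields a well-defined (up to isomorphism) nondegenerate $\cO_{X_1}$--$\mathbb{O}_X$-correspondence $(\pi_{\Phi_P}, \cO_{X_1} \ox_{\Phi_P} \mathbb{O}_X)$, whose isomorphism class is a morphism in $\Ench$ from $\cO_{X_1}$ to $\mathbb{O}_X$.

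There is no real obstacle beyond what has already been established: the nontrivial content, namely that $\Phi_P$ exists and is strict completely positive, is contained in \cref{prop:Fock-subby}. The only point worth flagging is that one should confirm strictness is preserved under the passage from $\Phi_P \colon \cO_{X_1} \to \mathbb{O}_X$ to its associated map into $\End_{\mathbb{O}_X}(\mathbb{O}_X)$, but this is automatic since nondegenerate $*$-homomorphisms between $C^*$-algebras are strict and the composition of strict completely positive maps is strict.
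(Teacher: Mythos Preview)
Your proposal is correct and matches the paper's treatment: the corollary is stated without proof, as it follows immediately from \cref{prop:Fock-subby} together with the definitions of strict positive correspondence and the $\KSGNS$ semi-functor, exactly as you outline.
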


\section{Morphisms from bi-Hilbertian bimodules}
\label{sec:bi-curious}

In this section we change tack and consider another class of examples of positive correspondence which arise by trying to generalise the covariant correspondences of \cite{Meyer-Sehnem}.

\subsection{Bi-Hilbertian bimodules}
\label{subsec:biHilb-bimod}

Amongst nondegenerate \mbox{$C^*$-correspondences} there is a useful subclass more general than Morita equivalences, but much tamer than general \mbox{$C^*$-correspondences}. 
The properties of these {\em bi-Hilbertian bimodules} were developed by Watatani and coauthors (we refer to \cite{KajPinWat} for most facts) 
as a $C^*$-algebraic version of the setting of Jones index theory, 
and have since arisen in work of \cite{CCH} on representation theory, \cite{RRSext,RRSPD,GMR,AR} on the Kasparov theory of Cuntz--Pimsner algebras, 
and in the characterisation of ``non-commutative vector bundles'' \cite{RSims}.

In this section we assume that all $C^*$-modules are countably generated.

\begin{defn}
	\label{defn:bimod} Let $A$ and $B$ be \mbox{$C^*$-algebras}. Following \cite{KajPinWat}, a
	\emph{bi-Hilbertian $A$--$B$-bimodule} $F$ is a right $B$-module with inner product
	$\langle\cdot\mid\cdot\rangle_B$ which is also a left $A$-module with inner product
	${}_A\langle\cdot\mid\cdot\rangle$ such that
	\begin{enumerate}
		\item the left action of $A$ on $F$ is adjointable with respect to
		$\langle\cdot\mid\cdot\rangle_B$;
		\item  the right action of $B$ on $F$ is adjointable with respect to
		${}_A\langle\cdot\mid\cdot\rangle$; and
		\item the left and right inner products induce equivalent norms on $F$.
	\end{enumerate} 
\end{defn}
We usually do not refer to the left and right action $*$-homomorphisms and simply write ${}_A F_B$ for a bi-Hilbertian $A$--$B$-bimodule since the definition is symmetric, 
unlike that of a \mbox{$C^*$-correspondence}.
By \cite[Proposition~2.16]{KajPinWat} the actions on ${}_A F_B$ are automatically nondegenerate. 
Many examples can be found in \cite[Section 2.1]{RRSext}.

Before we examine bi-Hilbertian bimodules,
we address the question of when a correspondence has a compatible left inner product. 
 A less general version of the following lemma appears in \cite[Lemma 3.23]{LRV}. 

\begin{lemma}[cf. {\cite[Lemma~2.6]{KajPinWat}}]\label{lem:Watatani+}
	Let $F_B$ be a countably generated right $B$-module, and let $A\subseteq \End_B(F)$ be a $C^*$-subalgebra.
	Suppose that ${}_A\langle\cdot\mid\cdot\rangle$ is a left $A$-valued
	inner product on $F_B$ for which the right action of $B$ is adjointable. Then there is a
	$A$-bilinear faithful positive map $\Upsilon \colon\End_B^{00}(F) \to A$ such that
	$\Upsilon(\Theta_{e,f}) \coloneqq {}_A\langle e\mid f\rangle$ for all $e,f \in F_B$. For any frame $(f_i)$
	for $F_B$, we have
	\[
		\Upsilon(T) = \sum_i {}_A\langle T f_i\mid f_i\rangle\quad{\rm for\ all }\ T \in \End_B^{00}(F).
	\]
	On the other hand if there is an $A$-bilinear faithful positive map $\Upsilon \colon\End_B^0(F) \to A$, then ${}_A\langle e\mid f\rangle \coloneqq \Upsilon(\Theta_{e,f})$ defines a left
	$A$-valued inner product on $F_B$ for which the right action of $B$ is adjointable.
\end{lemma}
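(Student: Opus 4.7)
The plan is to handle both directions in parallel by exploiting the identification $\End^0_B(F) \cong F \ox_B F^*$ from \cref{ntn:compacts} (and its restriction to finite-rank operators on the algebraic tensor product side), under which $\Theta_{e,f} \leftrightarrow e \ox f^*$. Under this identification, the formula $\Upsilon(\Theta_{e,f}) = {}_A\langle e\mid f\rangle$ sets up a dictionary in which sesquilinearity, $A$-bilinearity, and adjointability of the $B$-action all translate directly between properties of the left inner product and those of $\Upsilon$.

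For the forward direction I would take $\Upsilon(\Theta_{e,f}) \coloneqq {}_A\langle e\mid f\rangle$ as a definition on rank-ones and extend linearly. Well-definedness reduces to checking $B$-balancing, which is precisely adjointability of the right $B$-action on ${}_A\langle\cdot\mid\cdot\rangle$: it gives ${}_A\langle eb\mid f\rangle = {}_A\langle e\mid fb^*\rangle$, matching $\Theta_{eb,f} = \Theta_{e,fb^*}$. $A$-bilinearity then falls out of $a\Theta_{e,f} = \Theta_{ae,f}$ and $\Theta_{e,f}a = \Theta_{e,a^*f}$ combined with the $A$-sesquilinearity of the left inner product. The hard step is positivity. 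For a finite sum $T = \sum_{i=1}^n \Theta_{e_i, g_i}$ the Gram matrix $C \coloneqq [\langle e_i\mid e_j\rangle_B] \in M_n(B)$ is positive, and a factorisation $C = D^*D$ with $D = [d_{ij}]$, combined with the identities $\Theta_{f,e}\Theta_{e',f'} = \Theta_{f\langle e\mid e'\rangle_B, f'}$ and $\Theta_{xb, y} = \Theta_{x, yb^*}$, rearranges
\begin{equation*}
  T^*T = \sum_k \Theta_{H_k, H_k}, \qquad H_k \coloneqq \sum_i g_i d_{ki}^*,
\end{equation*}
so that $\Upsilon(T^*T) = \sum_k {}_A\langle H_k \mid H_k\rangle \ge 0$; if this sum vanishes then each $H_k = 0$, forcing $T^*T = 0$ and hence the faithfulness of $\Upsilon$. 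For the frame formula I would reduce by linearity to $T = \Theta_{e,g}$, use adjointability to write
\begin{equation*}
  \sum_i {}_A\langle \Theta_{e,g} f_i \mid f_i\rangle = \sum_i {}_A\langle e \mid f_i\langle f_i \mid g\rangle_B\rangle,
\end{equation*}
and collapse the right-hand side to ${}_A\langle e\mid g\rangle$ via the frame identity $\sum_i f_i\langle f_i\mid g\rangle_B = g$ together with continuity of ${}_A\langle e\mid \cdot\rangle$, as furnished by the Cauchy--Schwarz bound $\|{}_A\langle e\mid h\rangle\|^2 \le \|e\|_A^2\|h\|_A^2$ and the comparability of the left and right norms on $F$.

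For the backward direction I would set ${}_A\langle e\mid f\rangle \coloneqq \Upsilon(\Theta_{e,f})$ and transport the properties of $\Upsilon$ back through the same dictionary. Sesquilinearity in $(e,f)$ and left $A$-linearity in $e$ follow from the corresponding properties of $\Upsilon$ together with the bilinearity of $(e,f) \mapsto \Theta_{e,f}$ and the relation $a\Theta_{e,f} = \Theta_{ae,f}$. Hermitian symmetry uses $\Theta_{e,f}^* = \Theta_{f,e}$ and the fact that any positive linear map between \mbox{$C^*$-algebras} is automatically $*$-preserving. Positive-definiteness is immediate from positivity and faithfulness of $\Upsilon$ applied to $\Theta_{e,e}$. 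Finally, adjointability of the right $B$-action is the cleanest part: applying $\Upsilon$ to $\Theta_{eb,f} = \Theta_{e,fb^*}$ yields ${}_A\langle eb\mid f\rangle = {}_A\langle e\mid fb^*\rangle$ on the nose.
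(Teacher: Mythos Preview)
The paper does not supply its own proof of this lemma, deferring instead to \cite[Lemma~2.6]{KajPinWat} (and noting \cite[Lemma~3.23]{LRV} for a less general version), so there is nothing to compare against directly. Your argument is the standard one and is correct in the bi-Hilbertian setting.

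Two remarks worth making. First, your claim that well-definedness ``reduces to checking $B$-balancing'' presupposes that the only relations among the $\Theta_{e,f}$ are the $B$-balanced ones, i.e.\ that the natural map $F \odot_B F^* \to \End_B^{00}(F)$ is injective on the algebraic tensor product; this is not entirely obvious. A cleaner route is to take the frame formula as the \emph{definition} of $\Upsilon$ (manifestly a well-defined function of the operator $T$) and then verify $\Upsilon(\Theta_{e,g}) = {}_A\langle e\mid g\rangle$ as you do---this also gives well-definedness for free, since if $\sum_i \Theta_{e_i,f_i}=0$ then each term $\sum_i e_i\langle f_i\mid u_\alpha\rangle_B$ in the frame sum vanishes. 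Second, you correctly flag that the frame-formula step needs continuity of ${}_A\langle e\mid\cdot\rangle$ with respect to the right-module norm, which is exactly the norm-comparability hypothesis of \cref{defn:bimod}(iii). That hypothesis is not stated in the lemma itself, but it is clearly the intended ambient setting; without it the frame sum need not converge.
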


\begin{defn}[{\cite[Definition~2.8]{KajPinWat}}]
	\label{defn:fin-wat}
	Let ${}_A F_B$ be a bi-Hilbertian $A$--$B$-bimodule. We say that ${}_A F_B$ has \emph{finite
		right numerical index} if there exists $\lambda
		> 0$ such that
	\begin{equation}
      \big\|\sum_{i=1}^n {}_A \langle f_i \mid f_i\rangle\big\| \le \lambda \big\|\sum_{i=1}^n \Theta_{f_i, f_i}\big\|
		\quad\text{ for all $n\in\N$ and all $f_1, \dots , f_n \in F$.}
	\label{eq:num-ind}
	\end{equation}
	The \emph{right numerical index} of ${}_A F_B$ is the infimum of the numbers $\lambda$ satisfying
	\labelcref{eq:num-ind}.
\end{defn}
\begin{rmk}
	Following~\cite[Corollary~2.11]{KajPinWat}, if ${}_A F_B$ has finite right numerical index, then the map $\Upsilon \colon\End_B^{00}(F) \to A$ of \cref{lem:Watatani+} 
    extends to an $A$-bilinear $*$-homomorphism $\Upsilon \colon\End_B^{0}(F) \to A$ satisfying
    \[
    \Upsilon(\Theta_{e,f}) = {}_A \langle e \mid f \rangle \quad \text{for all } e \in E,\, f \in F.
    \]
	If $T\in \End_B^{0}(F)$ commutes with all $a\in A$, then
	$
	a\Upsilon(T)=\Upsilon(aT)=\Upsilon(Ta)=\Upsilon (T)a,
	$
	so $\Upsilon(T)$ is in the centre of $A$.	
	In particular, $\Upsilon({\rm Id}_F)$ is central in $A$.
\end{rmk}
 In \cite[Proposition~2.13]{KajPinWat} it is shown that tensor products of bi-Hilbertian bimodules with finite left and right numerical indices are again bi-Hilbertian bimodules.

Let ${}_A F_B$ be a bi-Hilbertian bimodule, countably generated as a right module and with
finite right numerical index. By \cite[Theorem 2.22]{KajPinWat}, the left
action of $A$ on ${}_A F_B$ is by compact endomorphisms with respect to $\langle\cdot \mid \cdot\rangle_B$
if and only if, for every frame $(f_j)$ for the right $B$-module $F_B$, the series
\begin{equation}
\label{eq:rInd}
    \sum_{j\geq 1}{}_A\langle f_j \mid f_j\rangle
\end{equation}
converges strictly in $\Mult(A)$. In this case we denote the strict limit by $\rInd(F)$.
We note that $\rInd(F)$ is independent of the frame $(f_j)$, \cite[Theorem 2.22]{KajPinWat}.

\begin{defn} When it exists we call $\rInd(F)$ the
\emph{right Watatani index} of ${}_A F_B$. 
\end{defn}

\begin{defn}
	We say that ${}_A F_B$ has \emph{finite right Watatani index} if
	it has finite right numerical index and the left action is by compacts. The left numerical index and left index are defined similarly. If ${}_A F_B$ has finite left and right index, we say that 
	${}_A F_B$ has \emph{finite index}.
\end{defn}
\begin{example}[{\cite[Corollary 4.14]{KajPinWat}}]
	A Morita equivalence $A$--$B$-bimodule is a finite index bi-Hilbertian $A$--$B$-bimodule with
	right index $1 \in \Mult(A)$ and left index $1 \in \Mult(B)$. On the other hand, if a finite index bi-Hilbertian $A$--$B$-bimodule ${}_A F_B$ has right index $1 \in \Mult(A)$ and left index $1 \in \Mult(B)$, then $F_B$ can be equipped with a left inner product making $F_B$ into a Morita equivalence $A$--$B$-bimodule. 
\end{example}

We record the following facts about $\rInd(F)$.
\begin{lemma}[{\cite[Corollary 2.28]{KajPinWat}}]\label{lem:nice_index}
	Let ${}_A X_B$ be a bi-Hilbertian bimodule with finite right Watatani index. Then $\rInd(F)$ is a positive central element of $\Mult(A)$. Moreover, the following are equivalent:
	\begin{enumerate}
		\item $\rInd(F)$ is invertible,
		\item the left action $A \to \End_A^0(F)$ is injective, and
		\item the left inner product ${}_A\langle \cdot \mid \cdot \rangle$ is full.
	\end{enumerate}
\end{lemma}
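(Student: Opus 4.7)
The plan is to first establish positivity and centrality of $\rInd(F)$, and then obtain all three equivalences from a single key identity for the left action.

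Positivity is immediate: each partial sum $\sum_{j=1}^{N} {}_A\langle f_j \mid f_j\rangle$ is positive in $A$ and positivity passes to strict limits in $\Mult(A)$. For centrality, I plan to strictly extend the $A$-bilinear map $\Upsilon \colon \End_B^0(F) \to A$ of \cref{lem:Watatani+} to an $A$-bilinear map $\Mult(\End_B^0(F)) \to \Mult(A)$, and to observe that this extension sends $\Id_F$ (the strict limit of $\sum_j \Theta_{f_j, f_j}$) to $\rInd(F)$. Because $\Id_F$ commutes with the left action of $A$, $A$-bilinearity of the extended $\Upsilon$ then yields $a \rInd(F) = \Upsilon(a \Id_F) = \Upsilon(\Id_F a) = \rInd(F) a$ for every $a \in A$.

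The driving identity is
\enveqn{\sum_{j \ge 1} {}_A\langle a f_j \mid a f_j \rangle = a \rInd(F) a^{*},}
for $a \in A$, with strict convergence in $\Mult(A)$. Since each summand on the left is positive, $a \rInd(F) a^{*} = 0$ forces every $a f_j = 0$, and hence $aF = 0$ by the frame expansion $x = \sum_j f_j \langle f_j \mid x \rangle_B$; the converse is clear. Thus $\ker(\text{left action}) = \{a \in A : a \rInd(F) = 0\}$, which immediately gives (1) $\Rightarrow$ (2). For the equivalence with fullness, I plan to identify the range ideal $I \coloneqq \overline{\linspan}\{{}_A\langle x \mid y \rangle : x,y \in F\}$ with $\overline{A \cdot \rInd(F)}$: one inclusion comes from the norm-convergent approximation $a \rInd(F) = \lim_N a \sum_{j=1}^{N} {}_A\langle f_j \mid f_j \rangle$ (valid since left-multiplication by $a \in A$ converts strict into norm convergence on $\Mult(A)$), and the other from expanding each generator ${}_A\langle x \mid y \rangle$ via a frame together with the compatibility ${}_A\langle xb \mid y \rangle = {}_A\langle x \mid yb^{*} \rangle$.

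With this identification, invertibility of $\rInd(F)$ forces $A = A \rInd(F) \cdot \rInd(F)^{-1} \subseteq I$, giving (1) $\Rightarrow$ (3); conversely fullness of $I$ forces $\overline{A \rInd(F)} = A$. The step I expect to be the main obstacle is closing the circle at (3) $\Rightarrow$ (1) -- equivalently, at (2) $\Rightarrow$ (1) -- since one must rule out the possibility of an injective left action coexisting with a positive but non-invertible central $\rInd(F)$. The crucial extra ingredient is the equivalence of the norms induced by the two inner products in \cref{defn:bimod}(iii), which quantitatively bounds the left inner product below in terms of the right one and therefore bounds $\rInd(F)$ below in $\Mult(A)$; combined with the key identity above and $A$-bilinearity of $\Upsilon$, this yields invertibility. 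Full details can be found in \cite[Corollary~2.28]{KajPinWat}.
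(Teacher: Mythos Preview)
The paper does not prove this lemma at all; it is stated with attribution to \cite[Corollary~2.28]{KajPinWat} and immediately followed by the next definition, with no proof environment. Your proposal therefore goes well beyond what the paper provides, and in that sense there is nothing to compare against except the external reference you already cite at the end.

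As a standalone sketch your plan is reasonable and the easy implications are handled correctly: positivity is clear, centrality via the strictly extended $A$-bilinear $\Upsilon$ is the right idea, the identity $\sum_j {}_A\langle af_j \mid af_j\rangle = a\,\rInd(F)\,a^*$ cleanly gives $\ker(\text{left action}) = \{a : a\,\rInd(F) = 0\}$, and (1)$\Rightarrow$(2), (1)$\Rightarrow$(3) follow. You are right to flag (2)/(3)$\Rightarrow$(1) as the genuine obstacle. One caution: the norm-equivalence axiom in \cref{defn:bimod}(iii) does not by itself yield a uniform lower bound $\rInd(F) \ge c\,1$ in $\Mult(A)$, since the two module norms compare vectors in $F$ rather than operator norms of partial sums in $A$; the passage from one to the other in \cite{KajPinWat} goes through their analysis of the conditional expectation $\Upsilon$ and the associated index theory, and is not a one-line consequence of norm equivalence. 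So your final paragraph correctly identifies where the work lies but slightly oversells how directly the axiom closes the gap; deferring to \cite[Corollary~2.28]{KajPinWat} for that step, as both you and the paper do, is appropriate.
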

\cref{lem:nice_index} motivates the following definition.
\begin{defn}
	A bi-Hilbertian bimodule is \emph{regular} if it has finite index and both the left and right inner products are full (equivalently the left and right actions are injective).
\end{defn}

If ${}_AF_B$ is a regular bi-Hilbertian bimodule then the left and right Watatani indices are central, positive and invertible elements of $\Mult(A)$ and $\Mult(B)$, respectively. 
This allows us to write $e^{\beta} \in \Mult(A)$ for the right Watatani index, where $\beta = \beta_R$ is central and self-adjoint. 
The notation $e^{\beta}$ will always mean the \emph{right} Watatani index unless stated otherwise. 
When needed, the left Watatani index is denoted $e^{\beta_L}$.

The next result gives useful relationships between frames for the different module structures on a bi-Hilbertian bimodule, its conjugate module, and its algebra of compact operators. 
Recall from \cref{ntn:compacts} that if ${}_A F_B$ is a bi-Hilbertian bimodule then $\End^0_B(F)$ is identified with $F \ox_B F^*$. 
Since ${}_A F_B$ comes equipped with a left $A$-valued inner product, $\End_B^0(F)$ inherits the structure of a bi-Hilbertian $A$-bimodule, {\cite[Corollary~2.29]{KajPinWat}}.

\begin{lemma}
	Let ${}_A F_B$ be a bi-Hilbertian bimodule. 
	Given a right frame $(u_j)_j\subseteq F_B$ and left frame $(\wt{u}_k)_k\subseteq {}_AF$, a right frame for ${}_A F\ox_BF^*_A$ is given by $(u_j \ox \wt{u}_k^*)_{j,k}$, 
    and a left frame is given by $(\wt{u}_j \ox u_k^*)_{j,k}$.
	If ${}_A F_B$ has finite index then so does ${}_A F\ox_BF^*_A$, and if ${}_A F_B$ is regular then so is ${}_A F\ox_BF^*_A$.
	Analogous statements also hold for ${}_B F^* \ox_A F_B$. 
\end{lemma}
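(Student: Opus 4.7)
The plan is to verify the four claims in sequence. For the right frame assertion, I would proceed by direct calculation: given $f \ox g^* \in F \ox_B F^*$, first expand $f = \sum_j u_j \langle u_j \mid f\rangle_B$ using the right frame for $F_B$ and push the $B$-scalar across the balanced tensor product via the left $B$-action $b \cdot h^* = (h \cdot b^*)^*$ on $F^*$, obtaining $f \ox g^* = \sum_j u_j \ox (g \cdot \langle f \mid u_j\rangle_B)^*$. Then, for each $j$, apply the left frame expansion $h = \sum_k {}_A\langle h \mid \wt{u}_k\rangle \wt{u}_k$ to $h = g \cdot \langle f \mid u_j\rangle_B$ and pass to the conjugate using $(a \cdot h)^* = h^* \cdot a^*$. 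This produces $f \ox g^* = \sum_{j,k} (u_j \ox \wt{u}_k^*) \cdot {}_A\langle \wt{u}_k \mid g \cdot \langle f \mid u_j\rangle_B\rangle$. A separate computation of $\langle u_j \ox \wt{u}_k^* \mid f \ox g^*\rangle_A$, using the convention $\langle h_1^* \mid h_2^*\rangle_A = {}_A\langle h_1 \mid h_2\rangle$ on the conjugate, yields exactly the same scalar, confirming the frame identity. The left frame claim follows by the symmetric computation, obtained by swapping the roles of the left and right frames for $F$ and expanding $g$ first.

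For the finite index claim, I would invoke the identification ${}_A F \ox_B F^*_A \cong \End^0_B(F)$ from \cref{ntn:compacts}, under which the two $A$-valued inner products on $F \ox_B F^*$ are realised by the $A$-bilinear map $\Upsilon$, giving $\End^0_B(F)$ the bi-Hilbertian $A$-bimodule structure of \cite[Corollary 2.29]{KajPinWat}. Finiteness of both Watatani indices of $F \ox_B F^*$ then follows from the finite index of $F$ via the stability result \cite[Proposition 2.13]{KajPinWat}; alternatively, the explicit right frame constructed above together with the finite index of $F$ can be used to verify directly that $\sum_{j,k}{}_A\langle u_j \ox \wt{u}_k^* \mid u_j \ox \wt{u}_k^*\rangle$ converges strictly in $\Mult(A)$.

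For regularity, assume both inner products on $F$ are full. A short computation gives $\langle f \ox g_1^* \mid f \ox g_2^*\rangle_A = {}_A\langle g_1 \mid g_2 \cdot \langle f \mid f\rangle_B\rangle$, and combining density of $\{\langle f \mid f\rangle_B : f \in F\}$ in $B$ with nondegeneracy of the right $B$-action on $F$ and density of ${}_A\langle F \mid F\rangle$ in $A$ shows that the right $A$-valued inner product on $F \ox_B F^*$ densely spans $A$. The left case is symmetric. Finally, the analogous statements for ${}_B F^* \ox_A F_B$ follow by applying the entire argument to the regular bi-Hilbertian $B$--$A$-bimodule $F^*$ in place of $F$.

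The main obstacle I anticipate is not mathematical depth but careful bookkeeping: the conventions for the conjugate module, for which argument of each inner product is linear, and for the direction in which $B$-scalars cross the balanced tensor product must all be fixed consistently, as any single sign or ordering slip spoils the frame identity. Invoking the identification with $\End^0_B(F)$ wherever possible offloads such manipulations to already-established facts about bi-Hilbertian structures on compact-operator algebras.
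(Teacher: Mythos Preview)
Your proposal is correct and essentially self-contained, while the paper's proof is almost entirely by citation. For the frame statement the paper invokes \cite[Proposition~2.16]{Grooves}, which presumably contains the very computation you outline; for finite index the paper appeals to \cite[Theorem~5.1]{KajPinWat} rather than \cite[Proposition~2.13]{KajPinWat}, but either route works once one observes that $F^*$ inherits finite index from $F$ with the roles of left and right swapped. The only genuine methodological difference is in the regularity step: you verify fullness of the two $A$-valued inner products directly, whereas the paper uses the equivalent characterisation via injectivity of the left and right actions and appeals to \cite[Lemma~4.7]{KatsuraCPalg} to see that $\phi \ox \Id_{F^*}$ is injective when $\phi$ is. Your fullness argument is fine and arguably more elementary; the paper's injectivity argument has the minor advantage of not needing to unwind the tensor-product inner product explicitly. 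Your caveat about bookkeeping is well placed---the conjugate-module conventions are exactly where such computations tend to go wrong---but your outline handles them consistently.
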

\begin{proof}
	The statement about frames follows from \cite[Proposition~2.16]{Grooves}.	
	If ${}_A F_B$ has finite index, then so does ${}_A F \ox_B F_A^*$, by \cite[Theorem 5.1]{KajPinWat}. 
    If the left action $\phi \colon A \to \End^0_B(F)$ is injective, then so is $\phi \ox \Id_{F^*}$ by \cite[Lemma 4.7]{KatsuraCPalg}. 
    Similarly, if the right action on ${}_A F_B$ is injective, then the right action on ${}_A F \ox_B F_A^*$ is injective.
\end{proof}

The following property was discovered and described in \cite[Section 4.2]{KajPinWat} using the language of intertwiners.

\begin{lemma}
	\label{lem:complemented_bimodule}
	Let ${}_A F_B$ be a regular bi-Hilbertian bimodule. Then $P_0\colon F\ox_B F^* \to F\ox_B F^*$ defined by 
	\[
	P_0 (T) = e^{-\beta}\Upsilon(T)\, {\rm Id}_F 
	\]
	is an $A$-bilinear adjointable projection. In particular, $P(F \ox_B F^*) = A  \Id_F$ is a complemented bi-Hilbertian sub-bimodule of ${}_A F \ox_B F^*_B$, which is isomorphic to ${}_A A_A$.
\end{lemma}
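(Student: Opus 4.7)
The plan is to verify the four asserted properties of $P_0$ — $A$-bilinearity, idempotency, self-adjointness, and the identification of $P_0(F\ox_B F^*)$ as a complemented bi-Hilbertian sub-bimodule isomorphic to ${}_A A_A$ — using the structural properties of $\Upsilon$ together with the centrality and invertibility of $e^{\pm\beta}$ in $\Mult(A)$ guaranteed by \cref{lem:nice_index}. Throughout, I identify $a\,\Id_F$ with the image of the left action $\phi \colon A \to \End_B^0(F)$.

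The technical core is the derivation of the formulas $\langle T \mid S\rangle_A = \Upsilon(T^*S)$ and ${}_A\langle T \mid S\rangle = \Upsilon(TS^*)$ for the two bi-Hilbertian inner products on $F\ox_B F^* \cong \End_B^0(F)$. These follow from the tensor-product inner-product formula applied to the conjugate-module structure of $F^*$, combined with the elementary composition identities $\Theta_{f_1,g_1}^*\Theta_{f_2,g_2} = \Theta_{g_1\langle f_1\mid f_2\rangle_B,\,g_2}$ and $\Theta_{f_1,g_1}\Theta_{f_2,g_2}^* = \Theta_{f_1\langle g_1\mid g_2\rangle_B,\,f_2}$, together with the defining identity $\Upsilon(\Theta_{e,f}) = {}_A\langle e\mid f\rangle$ and adjointability of the right $B$-action for ${}_A\langle\cdot\mid\cdot\rangle$.

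Given these formulas, $A$-bilinearity of $P_0$ is immediate from the $A$-bilinearity of $\Upsilon$ and the centrality of $e^{-\beta}$. For idempotency,
\[
P_0^2(T) = e^{-\beta}\Upsilon\!\big(e^{-\beta}\Upsilon(T)\Id_F\big)\Id_F = e^{-\beta}\cdot e^{-\beta}\Upsilon(T)\cdot\Upsilon(\Id_F)\Id_F = e^{-\beta}\Upsilon(T)\Id_F = P_0(T),
\]
using $\Upsilon(\Id_F) = \rInd(F) = e^{\beta}$. Since $\Theta_{e,f}^* = \Theta_{f,e}$ and ${}_A\langle f\mid e\rangle = {}_A\langle e\mid f\rangle^*$, the map $\Upsilon$ is $*$-preserving, and the inner-product formula yields
\[
\langle P_0(T)\mid S\rangle_A = \Upsilon(T)^*\,e^{-\beta}\Upsilon(S) = \Upsilon(T^*)\,e^{-\beta}\Upsilon(S) = \langle T\mid P_0(S)\rangle_A,
\]
so $P_0$ is self-adjoint, hence an adjointable projection.

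Finally, $\Upsilon(\phi(a)) = a\,e^{\beta}$ shows that $P_0$ restricts to the identity on $\phi(A)$, so its image is exactly $\phi(A) = A\,\Id_F$. To identify this image with ${}_A A_A$ I would define $\Phi\colon {}_A A_A \to P_0(F\ox_B F^*)$ by $\Phi(a) = \phi(a\,e^{-\beta/2})$; centrality of $e^{-\beta/2}$ gives $A$-bilinearity, and
\[
\langle\Phi(a)\mid\Phi(b)\rangle_A = \Upsilon\!\big(\phi(e^{-\beta/2}a^*b\,e^{-\beta/2})\big) = e^{-\beta/2}a^*b\,e^{-\beta/2}\,e^{\beta} = a^*b,
\]
and symmetrically ${}_A\langle\Phi(a)\mid\Phi(b)\rangle = ab^*$. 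The main subtlety throughout is bookkeeping the extra $e^{\beta}$ that $\Upsilon(\Id_F)$ injects into every inner-product computation; the symmetric rescaling by $e^{-\beta/2}$ is precisely what upgrades the obvious bimodule identification $A \leftrightarrow \phi(A)$ from a similarity to a genuine bi-Hilbertian isomorphism.
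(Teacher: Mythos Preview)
Your proof is correct and cleaner than the paper's. The key move you make that the paper does not is to isolate the identities
\[
\langle T\mid S\rangle_A=\Upsilon(T^*S),\qquad {}_A\langle T\mid S\rangle=\Upsilon(TS^*)
\]
for the bi-Hilbertian structure on $F\ox_B F^*\cong\End_B^0(F)$; once these are in hand, idempotency and self-adjointness of $P_0$ reduce to two-line algebraic checks using $\Upsilon(\phi(a))=a\,e^{\beta}$ and centrality of $e^{\pm\beta}$. The paper instead works directly with elementary tensors and a right frame $(u_l)$: it verifies $P_0(a\,\Id_F)=a\,\Id_F$ by expanding $a\,\Id_F=\sum_l a u_l\ox u_l^*$, and it establishes self-adjointness by computing $\langle P_0(e_1\ox e_2^*)\mid f_1\ox f_2^*\rangle_A$ and its mirror image separately, each evaluating to ${}_A\langle e_2\mid e_1\rangle\,e^{-\beta}\,{}_A\langle f_1\mid f_2\rangle$. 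Your approach trades these frame manipulations for a single structural lemma about the inner product, which is arguably what the paper's frame calculation is implicitly proving anyway (their preliminary step $\sum_l\langle u_l\ox u_l^*\mid f_1\ox f_2^*\rangle_A={}_A\langle f_1\mid f_2\rangle$ is exactly your formula evaluated at $T=\Id_F$, $S=\Theta_{f_1,f_2}$). You also go further than the paper in the last paragraph: the paper merely asserts that $P_0(F\ox_B F^*)\cong{}_AA_A$, whereas you exhibit the bi-Hilbertian isomorphism $a\mapsto\phi(ae^{-\beta/2})$ and check both inner products, correctly identifying the $e^{-\beta/2}$ rescaling needed to cancel the $e^{\beta}$ coming from $\Upsilon(\Id_F)$.
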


\begin{proof}
Recall that if $e^{\beta} \in \Mult(A)$ has finite index then $\Upsilon(T) \in A \subseteq F \ox_B F^*$ for all $T\in F\ox_B F^*$. Since $F\ox_B F^*$ is an ideal in $\End_B(F)$, we have $P_0(T) \in F \ox_B F^*$. 	
    The $A$-bilinearity of $\Upsilon$ and centrality of $e^{\beta}$ imply that $P_0$ is $A$-bilinear. Fullness of the left inner product implies that $P_0$ surjects onto $A \Id_F$.
    
   	To see that $P_0$ is idempotent, fix a right frame $(u_i)_i$ for $F_B$. Then for $a \in A$ we have 
	\[
	P_0(a \Id_F) = \lim_i \sum_i P_0(a u_i \ox u_i^*) = a \Id_F. 
	\]
	Since $e^{-\beta}\Psi(T)\in A$ it follows that $P^2(T) = P_0(e^{-\beta} \Upsilon(T) \Id_F) = e^{-\beta} \Upsilon(T) \Id_F$.
	
	For adjointability of $P_0$, fix $e_1 \ox e_2^*,\,f_1 \ox f_2^* \in F \ox_B F^*$. We first observe that
	\[
		\sum_l\langle u_l\ox u_l^*\mid f_1\ox f_2^*\rangle_A
		=\sum_l\langle u_l^*\mid\langle u_l\mid f_1\rangle_A \cdot f_2^*\rangle_A
		=\sum_l\big\langle (\langle f_1\mid u_l\rangle_A \cdot u_l)^*\mid f_2^*\big\rangle_A
		={}_A\langle f_1\mid f_2\rangle.
    \]
	Then, using centrality of $e^{-\beta}$ at the last equality,
	\begin{align*}
		\langle P_0(e_1 \ox e_2^*) \mid f_1 \ox f_2^* \rangle_A
		&=
		\sum_k\langle e^{-\beta} {}_A \langle e_1 \mid e_2 \rangle u_k\ox u_k^*\mid f_1\ox f_2^*\rangle_A\\
		&=\sum_k\langle u_k\ox u_k^*|e^{-\beta} {}_A \langle e_2 \mid e_1 \rangle f_1\ox f_2^*\rangle_A
		={}_A\langle e^{-\beta}{}_A \langle e_2 \mid e_1 \rangle f_1\mid f_2\rangle  \\                    
		 &=e^{-\beta}{}_A \langle e_2 \mid e_1 \rangle  {}_A\langle f_1\mid f_2\rangle
		  ={}_A \langle e_2 \mid e_1 \rangle e^{-\beta}  {}_A\langle f_1\mid f_2\rangle.
	\end{align*}
	A symmetric calculation shows $\langle e_1 \ox e_2^* \mid P_0(f_1 \ox f_2^*) \rangle_A  ={}_A \langle e_2 \mid e_1 \rangle e^{-\beta}  {}_A\langle f_1\mid f_2\rangle$, from which it follows that $P_0^* = P_0$.
\end{proof}

A bi-Hilbertian bimodule ${}_AF_B$  does not typically give a covariant correspondence between a \mbox{$C^*$-correspondence} ${}_AX_A$ and $F^*\ox_AX\ox_AF$ in the sense of \cite[Definition 2.21]{Meyer-Sehnem}. 
This is because $X\ox_A F \cong A \ox_A X \ox_A F \subseteq F\ox_B F^* \ox_A X \ox_A F$,
where the inclusion is as a complemented submodule. 
The only time this inclusion is an isomorphism is when $F$ is a Morita equivalence. 

Although they do not typically induce covariant correspondences, in the next section we show that conjugation of a \mbox{$C^*$-correspondence} by a bi-Hilbertian bimodule 
yields a strict completely positive map between corresponding Cuntz--Pimsner algebras. 

\subsection{From bi-Hilbertian bimodules to positive maps}
\label{subsec:better-name?}

Suppose that ${}_{A} F_B$ is a bi-Hilbertian bimodule of finite index, 
and ${}_AX_A$ is a \mbox{$C^*$-correspondence}.
\cref{lem:complemented_bimodule} provides natural projections on
the Fock module of the $B$--$B$-correspondence $F^*XF \coloneqq {}_B(F^*\ox_AX\ox_AF)_B$. We use these projections to induce
correspondences between their associated Cuntz--Pimsner algebras.   We start on Fock space and build expectations on the associated Toeplitz algebras.

Let $P_0 \colon F \ox_B F^* \to A  \Id_F$ be the projection of \cref{lem:complemented_bimodule}, so 
\[
P_0(f \ox g^*) = e^{-\beta} {}_A\langle f \mid g\rangle   \Id_F,
\]
where $e^{\beta}$ is the right Watatani index of $F_B$.
Consider the element
\begin{equation}
	Z\coloneqq e^{- \beta/2}  \Id_F \in \End_B(F).
	\label{eq:Zeds-dead}
\end{equation}
Since $F$ is finite index, $A$ acts compactly and so $aZ\in \End_B^0(F)=F\ox_B F^*$ for all $a\in A$.
Then for each $a \in A$, and right frame $(u_i)$ we have
\[
P_0(aZ) = e^{-\beta/2}\sum_i  P_0(a u_i \ox u_i^*)  = e^{-\beta/2} \sum_i e^{-\beta} a{}_A \langle u_i \mid u_i \rangle  \Id_F = aZ.
\] 
Since $e^{\beta} \in \Mult(A)$ commutes with elements of $A$, $Z$  commutes with elements of $A \subseteq \End_B^0(F)$. We record how inner products with $aZ$ work in $\End_B^0(F)$.
\begin{lemma}
	\label{lem:Z_facts}
	Let ${}_A F_B$ be a regular bi-Hilbertian bimodule. Then for $f_1 \ox f_2^* \in F \ox_B F^*$ and $a \in A$, we have 
    \[
      \langle f_1 \ox f_2^* \mid  aZ \rangle_A =  e^{-\beta/2} {}_A \langle f_1 \mid f_2 \rangle a.
	\]
	 In particular, for $a_1,\,a_2 \in A$, we have $\langle a_1 Z \mid a_2 Z \rangle_A = a_1^* a_2$.
\end{lemma}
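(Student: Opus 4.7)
The plan is to reduce both identities to the inner-product computation already carried out in the proof of \cref{lem:complemented_bimodule}. Fix a right frame $(u_i)_i$ for $F_B$. Since $F$ has finite right Watatani index the left action of $A$ on $F$ is by compacts, so $aZ = ae^{-\beta/2}\Id_F \in F \ox_B F^* = \End_B^0(F)$. Using the strict frame expansion $\Id_F = \sum_i u_i \ox u_i^*$ in $\End_B(F)$ and continuity of left multiplication by the fixed element $ae^{-\beta/2} \in A$, I would first establish the strictly convergent expansion
\[
aZ = \sum_i (ae^{-\beta/2} u_i) \ox u_i^*
\]
inside $F \ox_B F^*$.

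With this expansion in hand, the first identity follows from sesquilinearity and continuity of $\langle \cdot \mid \cdot\rangle_A$, the adjointability of the left $A$-action, and the centrality of $e^{-\beta/2} \in \Mult(A)$, all combined with the key calculation
\[
\sum_i \langle u_i \ox u_i^* \mid e_1 \ox e_2^*\rangle_A = {}_A\langle e_1 \mid e_2\rangle
\]
extracted from the proof of \cref{lem:complemented_bimodule}. Careful bookkeeping of which variable is conjugate-linear in the $F \ox_B F^*$ inner product yields exactly $\langle f_1 \ox f_2^* \mid aZ\rangle_A = e^{-\beta/2}{}_A\langle f_1 \mid f_2\rangle a$.

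For the second identity I would apply the first identity summand-by-summand to the expansion $a_1 Z = \sum_i (a_1 e^{-\beta/2} u_i) \ox u_i^*$. Each term contributes $e^{-\beta/2}\,{}_A\langle a_1 e^{-\beta/2} u_i \mid u_i\rangle\, a_2 = a_1^* e^{-\beta}\,{}_A\langle u_i \mid u_i\rangle\, a_2$ after pushing the scalars through the conjugate-linear slot of the left inner product and using centrality. Summing strictly in $\Mult(A)$, the identity $\sum_i {}_A\langle u_i \mid u_i\rangle = \rInd(F) = e^\beta$ then collapses the total to $a_1^* e^{-\beta} e^\beta a_2 = a_1^* a_2$.

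The main subtlety, rather than a serious obstacle, is justifying that the frame expansion of $aZ$ converges strictly inside the ideal $F \ox_B F^*$ of compact operators (not merely inside $\End_B(F)$), and carefully tracking the conjugation conventions for the inner product on $F \ox_B F^*$. Both points are handled by the same machinery already deployed in the proof of \cref{lem:complemented_bimodule}, so this lemma is essentially a corollary of that calculation.
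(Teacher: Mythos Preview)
Your approach is essentially the paper's: both expand $aZ$ via a right frame and reduce to the tensor-product inner product on $F \ox_B F^*$. You outsource the key identity $\sum_l \langle u_l \ox u_l^* \mid f_1 \ox f_2^*\rangle_A = {}_A\langle f_1 \mid f_2\rangle$ to the proof of \cref{lem:complemented_bimodule}, while the paper redoes that computation inline; for the second statement the paper just says ``follows from the first'', and your term-by-term frame argument is exactly what that amounts to.

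One bookkeeping slip to flag. In this paper's convention the left inner product ${}_A\langle \cdot \mid \cdot\rangle$ is \emph{linear} in the first slot (this is forced by $\Upsilon(\Theta_{e,f}) = {}_A\langle e \mid f\rangle$ with $\Upsilon$ $A$-bilinear, and is used explicitly in the proof of \cref{lem:complemented_bimodule}). So your step ${}_A\langle a_1 e^{-\beta/2} u_i \mid u_i\rangle = a_1^* e^{-\beta/2}{}_A\langle u_i \mid u_i\rangle$ is wrong; it gives $a_1 e^{-\beta/2}{}_A\langle u_i \mid u_i\rangle$, and your sum then collapses to $a_1 a_2$ rather than $a_1^* a_2$. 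The reason this didn't bite you is that the lemma's \emph{statement} itself carries a typo: the paper's own proof of the first identity actually lands on $e^{-\beta/2}{}_A\langle f_2 \mid f_1\rangle a$, with $f_1,f_2$ swapped. Feeding that corrected version into your frame expansion (and using linearity in the correct slot) gives $a_1^* a_2$ as desired. So your plan is sound; just don't let two conjugation errors cancel each other.
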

\begin{proof}
	We compute,
	\begin{align*}
		\langle  f_1 \ox f_2^* \mid aZ \rangle_{A}
		&= \sum_i \langle f_2^* \mid \langle f_1 \mid u_i \rangle_B \cdot u_i^* \rangle_A e^{-\beta/2}a
    	= \sum_i\langle (f_2 \cdot \langle f_1 \mid u_i \rangle_B)^* \mid u_i^* \rangle_A e^{-\beta/2}a\\
    	&= \sum_i {}_A \langle f_2 \mid u_i \cdot \langle u_i \mid f_1 \rangle_B \rangle e^{-\beta/2}a
    	= e^{-\beta/2}{}_A \langle f_2 \mid f_1 \rangle a.
	\end{align*}
	The second statement follows from the first. 
\end{proof}
	Let ${}_A F_B$ be a regular bi-Hilbertian bimodule and let $(\phi,{}_A X_A)$ be a $C^*$-correspondence.
 Since $e^{-\beta}$ is positive and invertible we have $P_0(F\ox F^*) = A Z$.  
Using $A$-bilinearity of $P_0$ we can define a projection $P_n$ on $F\ox_B(F^*XF)^{\ox_A n}\ox_BF^*$ by
\[
P_n\coloneqq P_0\ox{\rm Id}_X\ox P_0\ox{\rm Id}_X\ox P_0\ox
\cdots \ox P_0\ox {\rm Id}_X\ox P_0,
\]
where $P_0$ appears $n+1$ times. 
Since the image of $P_0$ is isomorphic to ${}_A A_A$, the image of $P_n$ can be identified with $X^{\ox n}$. We make this more precise.

\begin{lemma}
	\label{lem:fock-complement}
	Let $(\phi,{}_A X_A)$ be a \mbox{$C^*$-correspondence} and let ${}_A F_B$ a regular bi-Hilbertian
	bimodule. For all $n \in \N_0$, there is an adjointable $A$-bilinear isometric inclusion 
	$
	W_n \colon X^{\ox_A n} \to  F\ox_B(F^*XF)^{\ox_A n}\ox_BF^*
	$
	such that for $n \ge 1$,
	\[
	W_n(x_1 \ox \cdots \ox x_n) =  Z \ox x_1 \ox Z \ox \cdots \ox Z \ox x_n \ox Z
	\]
	and $W_0(a) = aZ$ for $a \in A \cong X^{\ox 0}$. For $f,g \in F$ and $\bigotimes_{i=1}^n (f_i \ox x_i \ox g_i^*) \in \Fock_{F^*XF}$, the adjoint satisfies
	\begin{align*}
		W_n^*\Big(f \ox \bigotimes_{i=1}^n (f_i^* \ox x_i \ox g_i) \ox g^* \Big)
		&= e^{-\beta/2} {}_A\langle f \mid f_1 \rangle \cdot x_1 \ox e^{-\beta/2}\cdot {}_A \langle g_1 \ox f_2^* \rangle \cdot x_2 \ox \cdots\\
		&\qquad  \ox e^{-\beta/2} {}_A \langle g_{n-1} \mid f_n \rangle \cdot x_n \cdot e^{-\beta/2} {}_A \langle g_n \mid g \rangle 
	\end{align*}
	Moreover, $
	W_n(X^{\ox_A n}) = P_n(F\ox_B(F^*XF)^{\ox_A n}\ox_BF^*)
	$
	is a complemented sub-correspondence of $F\ox_B(F^*XF)^{\ox_A n}\ox_BF^*$. If ${}_AF_B$ is a Morita equivalence bimodule, then conjugation by ${}_A F_B$ yields
	$X^{\ox_A n}\cong F\ox_B(F^* X F)^{\ox_A n}\ox_B F^*$.
\end{lemma}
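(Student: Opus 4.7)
The strategy is to use \cref{lem:Z_facts} as the workhorse: the identity $\langle a_1 Z \mid a_2 Z\rangle_A = a_1^* a_2$ says that $Z$ behaves as a unit for the right $A$-valued inner product, so the copies of $Z$ interleaving the tensor factors in $W_n(x_1 \otimes \cdots \otimes x_n)$ collapse out cleanly in inner-product computations. I would decompose the proof into four steps.

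First, well-definedness and $A$-bilinearity of $W_n$. Since $F$ has finite right index, $Z = e^{-\beta/2}\Id_F$ lies in $\End_B^0(F) = F \otimes_B F^*$, and by \cref{lem:nice_index} the element $e^{-\beta/2}$ is central in $\Mult(A)$, so $aZ = Za$ in $F \otimes_B F^*$ for every $a \in A$. Together with the standard balancing conditions of the internal tensor product over $A$, this ensures $W_n$ is well-defined and $A$-bilinear. Second, I would prove that $W_n$ is an isometry by induction on $n$. The base case $n=0$ is \cref{lem:Z_facts}, and the inductive step uses that the leftmost pairing $\langle Z \mid Z\rangle_A$ collapses to the identity of $\Mult(A)$, reducing the computation of $\langle W_n(x) \mid W_n(y)\rangle_A$ to the $(n-1)$-case applied to $x_2 \otimes \cdots \otimes x_n$ and $y_2 \otimes \cdots \otimes y_n$.

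Third, the explicit adjoint formula is verified by checking the defining relation $\langle W_n^*(y) \mid x\rangle_A = \langle y \mid W_n(x)\rangle_A$ against the claimed formula. Each of the $n+1$ contracted pairings $e^{-\beta/2}{}_A\langle f \mid f_1\rangle$, $e^{-\beta/2}{}_A\langle g_1 \mid f_2\rangle, \ldots, e^{-\beta/2}{}_A\langle g_n \mid g\rangle$ arises from a single application of \cref{lem:Z_facts} at the corresponding boundary between an $F$ and an $F^*$ factor. Fourth, for the image statement, \cref{lem:complemented_bimodule} gives $P_0(F \otimes_B F^*) = AZ$, and since $P_n = P_0 \otimes \Id_X \otimes P_0 \otimes \cdots \otimes \Id_X \otimes P_0$, its image is $AZ \otimes_A X \otimes_A AZ \otimes_A \cdots \otimes_A X \otimes_A AZ$, which coincides with $W_n(X^{\otimes n})$ under the canonical identifications $A \otimes_A X \cong X \cong X \otimes_A A$. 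Because $P_n$ is a projection, the image is complemented. In the Morita case, $F \otimes_B F^* \cong A$ with $Z$ corresponding to the unit (as $e^{\beta} = 1$), so $P_0 = \Id$, hence $P_n = \Id$, making $W_n$ a unitary isomorphism of correspondences.

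The main obstacle will be the bookkeeping required to correctly track positions of tensor factors, the placement of $e^{-\beta/2}$ on either side of pairings, and the distinction between left and right inner products inside the iterated module $F \otimes_B (F^*XF)^{\otimes n} \otimes_B F^*$. No conceptual input beyond \cref{lem:Z_facts,lem:complemented_bimodule} and standard manipulations of internal tensor products is required, but the adjoint computation in particular demands careful attention to ensure the $n+1$ nested left inner products emerge in the correct order.
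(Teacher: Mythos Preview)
Your proposal is correct and follows essentially the same approach as the paper: induct on $n$ using \cref{lem:Z_facts} to collapse a boundary $Z$, then read off the image from \cref{lem:complemented_bimodule}. The only cosmetic differences are that the paper peels off the \emph{rightmost} $x_n \ox Z$ (expanding $Z$ via a frame to reduce $\langle W_n x \mid W_n y\rangle_A$ to $\langle W_{n-1}x' \ox x_n \mid W_{n-1}y' \ox y_n\rangle_A$), whereas you peel from the left and invoke $\langle Z \mid Z\rangle_A = 1$ directly; and the paper verifies the adjoint formula by checking $W_n^* W_n = \Id$ together with the observation that the formula factors through $P_n$, rather than checking the defining adjoint relation as you propose. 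Your route avoids the frame expansion and is marginally cleaner, but the content is the same.
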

\begin{proof}
	For each $x \in {}_A X_A$ we can write $x = a_1 \cdot x' \cdot a_2$ for some $a_1,a_2 \in A$ and $x' \in {}_A X_A$. 
    So $Z \ox x \ox Z = Za_1 \ox x' \ox a_2 Z$ belongs to $F \ox_B F^* \ox_A X \ox_A F \ox_B F^*$. 
    In particular, the formula defining $W_n$ makes sense.	
	
	The case where $n = 0$ follows from the fact that $P_0(F\ox F^*)$ is the $A$-span of $Z$.  
	For $n \ge 1$ we begin by showing that $W_n$ preserves inner products, and is therefore isometric.
	Let $(u_i)$ be a right frame for $F$ and observe that for any $f \in F$ we have $\sum_{i} \langle f\mid u_i\rangle_B \cdot  u_i^* = f^*$. Fix elementary tensors 
	$x = x_1 \ox \cdots  \ox x_n$ and 
	$y = y_1 \ox \cdots \ox y_n$ in $X^{\ox n}$. Then
	\begin{align*}
		\langle Wx \mid Wy\rangle_{A} 
		&= \langle x_n \ox Z \mid \langle W_{n-1}x' \mid W_{n-1} y' \rangle_A \cdot  y_n \ox Z\rangle_A,
	\end{align*}
	where $x' = x_1 \ox \cdots \ox x_{n-1}$ and $y' = y_1 \ox \cdots \ox y_{n-1}$. 
    Using the frame relation at the fourth equality and the definition of $e^{\beta}$ at the fifth equality, 
	\begin{align*}
		\langle W_nx \mid W_ny\rangle_{A} 
		&= \sum_{i,j} \big\langle x_n \ox e^{-\beta/2} \cdot u_i \ox u_i^* \mid  \langle W_{n-1}x' \mid W_{n-1} y' \rangle_A \cdot  y_n \ox e^{-\beta/2} \cdot u_j \ox u_j^*\big\rangle_A\\
		&= \sum_{i,j} \big\langle u_i^* \mid e^{-\beta/2} \cdot u_i \mid \langle x_n \mid \langle W_{n-1}x' \mid W_{n-1} y' \rangle_A \cdot  y_n \rangle_A  e^{-\beta/2} \cdot u_j\rangle_B \cdot u_j^* \big\rangle_A\\
		&= \sum_{i,j} {}_A\big\langle u_i \cdot \langle e^{-\beta/2} \cdot u_i \mid \langle x_n \mid \langle W_{n-1}x' \mid W_{n-1} y' \rangle_A \cdot  y_n \rangle_A  e^{-\beta/2} \cdot u_j\rangle_B \mid  u_j \big\rangle\\
		&= \sum_{j} {}_A\big\langle  e^{-\beta} \langle x_n \mid \langle W_{n-1}x' \mid W_{n-1} y' \rangle_A \cdot  y_n \rangle_A  \cdot u_j \mid  u_j \big\rangle\\
		&= \langle x_n \mid \langle W_{n-1}x' \mid W_{n-1} y' \rangle_A \cdot  y_n \rangle_A
		= \langle W_{n-1}x' \ox x_n \mid W_{n-1} y' \ox y_n\rangle_A.
	\end{align*}
	An inductive argument now shows that $\langle W_nx \mid W_ny\rangle_A = \langle x \mid y\rangle_A$. 
    The preceding argument extends to spans of elementary tensors, and so $W_n$ extends to an isometric linear map from $X^{\ox_A n}$ to $ F\ox_B(F^*XF)^{\ox_A n}\ox_BF^*$. 
    The $A$-bilinearity of $W_n$ follows from the fact that $Z$ commutes with elements of $A$.  
	
	The formula for $W_n^*$ can be seen to hold by using \cref{lem:Z_facts} to check that $W_n^* W_n = \Id$ on elementary tensors. We also have
	\begin{align*}
		W_n(X^{\ox_A n})= A Z \ox_A X \ox_A AZ \ox \cdots \ox A Z \ox_A X \ox A Z = P_n(F\ox_B(F^*XF)^{\ox_A n}\ox_BF^* ),
		\end{align*}
		which is complimented in $F\ox_B(F^*XF)^{\ox_A n}\ox_BF^*$.
	
	For the final statement observe that if ${}_AF_B$ is a Morita equivalence,	then $F \ox_B F^* \cong A$. 
\end{proof}
The following is an immediate consequence of \cref{lem:fock-complement}.

\begin{corl}
\label{cor:dubbya}
Let $(\phi, {}_A X_A)$ be a \mbox{$C^*$-correspondence} and let ${}_AF_B$ be a regular bi-Hilbertian bimodule.
The universal property of direct sums yields an $A$-bilinear isometry 
\[
W \colon \Fock_X \to F \ox_B \Fock_{F^*XF} \ox_B F^*
\]
such that $Wx = W_n x$ for all $x \in X^{\ox_A n}$, and $W^* y = W_n^*y$ for all $y \in F \ox_B \Fock_{F^*XF} \ox_B F$. 
In $\End_A (F \ox_B \Fock_{F^* XF^*}\ox_B F^*)$, the strict sum $P=\sum_{n\geq 0}P_n$ defines an $A$-bilinear projection, 
and $W(\Fock_X)$ is isomorphic to the complemented $A$--$A$-correspondence $P(F^* \ox_B \Fock_{F^*XF} \ox_B F ^*)$. 
If ${}_AF_B$ is a Morita equivalence, then $\Fock_X\cong F\ox_B\Fock_{F^*XF}\ox_BF^*$.
\end{corl}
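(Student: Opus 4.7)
The strategy is to assemble the corollary directly from the per-level statements in \cref{lem:fock-complement} using the $\ell^2$-direct sum structures of the relevant Fock modules. The key observation is that both $\Fock_X$ and $F\ox_B\Fock_{F^*XF}\ox_BF^*$ decompose naturally as orthogonal direct sums of $A$--$A$-correspondences, indexed by the tensor degree $n$, and the maps $W_n$ and projections $P_n$ respect this decomposition.

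For the construction of $W$, I would proceed as follows. Write $\Fock_X=\bigoplus_{n\geq 0}X^{\ox_An}$ and $F\ox_B\Fock_{F^*XF}\ox_BF^*=\bigoplus_{n\geq 0}F\ox_B(F^*XF)^{\ox_An}\ox_BF^*$ as $\ell^2$-direct sums of right $A$-modules. Since each $W_n$ is adjointable and isometric with image in the $n$-th summand of the target, the summands are pairwise orthogonal, and for any $x=(x_n)_n\in\Fock_X$ we have $\sum_n\|W_nx_n\|^2=\sum_n\|x_n\|^2<\infty$. Hence $W\coloneqq\bigoplus_nW_n$ is a well-defined adjointable isometry whose adjoint is $W^*=\bigoplus_nW_n^*$, given on an elementary tensor in the $n$-th summand by the formula in \cref{lem:fock-complement}. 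The $A$-bilinearity of $W$ (and $W^*$) is inherited level-by-level from the $A$-bilinearity of each $W_n$ and $W_n^*$ established in \cref{lem:fock-complement}.

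For the projection $P$, observe that each $P_n$ is an adjointable $A$-bilinear projection on $F\ox_B(F^*XF)^{\ox_An}\ox_BF^*$, and hence extends (by zero on the other summands) to an adjointable $A$-bilinear projection on $F\ox_B\Fock_{F^*XF}\ox_BF^*$. The partial sums $\sum_{n=0}^NP_n$ form an increasing sequence of projections bounded above by $\Id$, so they converge strictly to an adjointable $A$-bilinear projection $P$. From $W_n(X^{\ox_An})=P_n(F\ox_B(F^*XF)^{\ox_An}\ox_BF^*)$ in \cref{lem:fock-complement}, summing over $n$ gives $W(\Fock_X)=P(F\ox_B\Fock_{F^*XF}\ox_BF^*)$, so $W(\Fock_X)$ is the complemented $A$--$A$-correspondence cut out by $P$. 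Finally, when ${}_AF_B$ is a Morita equivalence, the final assertion of \cref{lem:fock-complement} says each $W_n$ is already a unitary of correspondences, so $W=\bigoplus_nW_n$ is itself a unitary isomorphism $\Fock_X\cong F\ox_B\Fock_{F^*XF}\ox_BF^*$.

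There is no serious obstacle here: the argument is a straightforward patching of \cref{lem:fock-complement} across tensor powers, using orthogonality of summands for norm convergence of $W$ and monotonicity of the partial sums of $P_n$ for strict convergence of $P$. The only minor subtlety is to record carefully that strict convergence (rather than norm convergence) is what one gets for $P$, but this is standard for increasing bounded sequences of projections in $\End_A$ of an $\ell^2$-direct sum.
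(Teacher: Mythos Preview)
Your proposal is correct and is exactly what the paper intends: it records the corollary as an immediate consequence of \cref{lem:fock-complement} with no further proof, and your argument is precisely the standard $\ell^2$-direct sum patching of the per-level isometries $W_n$ and projections $P_n$ that makes this ``immediate'' claim rigorous.
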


\begin{rmk}
  \label{rmk:toeplitz-compacts}
  We make the following observations before stating the next result.
  Let ${}_A F_B$ be a nondegenerate C*-correspondence.
  For a \mbox{$C^*$-correspondence} $(\phi, {}_A X_A)$, the Toeplitz algebra $\cT_X$ is an $A$-algebra,
  so \cref{lem:bimod-algs} allows us to form the $B$-algebra
  \[
      F^* \ox_A\cT_X\ox_AF \cong \End^0_{\cT_X}(F^* \ox_A \cT_X).
  \]
  Similarly, \cref{lem:bimod-algs} gives us the $A$-algebra
  \[
      F \ox_B \cT_{F^*XF} \ox_B F^* \cong \End^0_{\cT_{F^*XF}}(F \ox_A \cT_{F^*XF}).
  \]
  \cref{lem:compacts-faithful} implies that $F \ox_B \cT_{F^*XF} \ox_B F^*$ acts faithfully by adjointable operators on $F \ox_B \Fock_{F^*XF}$ with the left action satisfying
  \[
      (f \ox T_x T_y^* \ox g^*) (h \ox z) =   f \ox T_xT_y^*\langle g \mid h\rangle_B z.
  \]
  The action extends to a faithful action on $F \ox_B \Fock_{F^*XF} \ox_B F^*$ by acting trivially on $F^*$, \cite[Lemma 4.7]{KatsuraCPalg}. 
\end{rmk}

\begin{lemma}
  \label{lem:bihilb_rep}
  Let $(\phi,{}_A X_A)$ be a \mbox{$C^*$-correspondence} and let ${}_AF_B$ be regular bi-Hilbertian bimodule.
  Let $\pi$ denote the left action of $A$ on $F \ox_B \cT_{F^*XF}$, and let $(u_i)_i$ be a right frame for $F_B$. 
  Define $\psi \colon X \to \End_{\cT_{F^*XF}}^0(F \ox_B \cT_{F^*XF})$ by 
  \begin{align}
  \psi(x) 
  =\sum_{i,j} u_i \ox T_{u_i^* \ox e^{-\beta/2} \cdot x \ox u_j} \ox u_j^*.
  \label{eq:sigh}
  \end{align}
  Then $(\pi,\psi)$ is a faithful representation of $(\phi,{}_A X_A)$ in $\End_{\cT_{F^*XF}}^0(F \ox_B \cT_{F^*XF})$ which induces an injective $*$-homomorphism $\alpha \colon \cT_X \to \End_{\cT_{F^*XF}}^0(F \ox_B \cT_{F^*XF})$.
\end{lemma}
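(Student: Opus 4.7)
The plan is to verify the three representation axioms for $(\pi, \psi)$, check faithfulness, apply Pimsner's universal property to obtain $\alpha$, and then establish injectivity of $\alpha$ by intertwining it with the standard Fock representation via the isometry $W$ of \cref{cor:dubbya}.

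For the representation axioms the key identity is $\psi(x)^* \psi(y) = \pi(\langle x \mid y \rangle_A)$. Using the product and adjoint formulas in $F \ox_B \cT_{F^*XF} \ox_B F^*$ from \cref{lem:bimod-algs}(iv), together with $T_\xi^* T_\eta = j_B(\langle \xi \mid \eta \rangle_B)$ inside $\cT_{F^*XF}$, the computation reduces to summing $\langle u_k^* \ox e^{-\beta/2}\cdot x \ox u_j \mid u_k^* \ox e^{-\beta/2}\cdot y \ox u_l\rangle_B$ over $k$; the $F^*XF$-inner product contracts this via $\sum_k {}_A\langle u_k \mid u_k\rangle = e^{\beta}$, and centrality of $e^{-\beta/2}$ then yields $\langle u_j \mid \langle x\mid y\rangle_A u_l\rangle_B$, matching the formula for $\pi(\langle x\mid y\rangle_A)$ obtained via \cref{lem:bimod-algs}(iv). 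The bilinearity relations $\psi(ax) = \pi(a)\psi(x)$ and $\psi(xa) = \psi(x)\pi(a)$ fall out similarly from the frame relations and centrality of $e^{-\beta/2}$. Faithfulness of $\pi$ is automatic: since ${}_AF_B$ is regular, $A \to \End^0_B(F)$ is injective, and by~\cite[Lemma~4.7]{KatsuraCPalg} the extension $T \mapsto T \ox \Id_{\cT_{F^*XF}}$ to $\End^0_{\cT_{F^*XF}}(F \ox_B \cT_{F^*XF})$ stays injective because $j_B \colon B \to \Mult(\cT_{F^*XF})$ is injective. The relation $\psi(x)^*\psi(x) = \pi(\langle x \mid x\rangle_A)$ then forces $\psi$ to be faithful, and Pimsner's universal property~\cite[Theorem~3.4]{Pimsner} produces the $*$-homomorphism $\alpha \colon \cT_X \to \End^0_{\cT_{F^*XF}}(F \ox_B \cT_{F^*XF})$.

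For injectivity of $\alpha$, extend the action to $F \ox_B \Fock_{F^*XF} \ox_B F^*$ acting trivially on the outer $F^*$, and let $W \colon \Fock_X \to F \ox_B \Fock_{F^*XF} \ox_B F^*$ be the adjointable isometry of \cref{cor:dubbya}. I claim $\alpha(T) \circ W = W \circ T$ for every $T \in \cT_X$; since $W^*W = \Id$, this gives $W^* \alpha(T) W = T$, forcing $\alpha$ to be injective. It suffices to verify the intertwining on generators $T_a$ ($a \in A$) and $T_x$ ($x \in X$). For $T_a$, the operator $\pi(a)$ multiplies the leftmost $F$-slot of $W_n(x_1 \ox \cdots \ox x_n) = Z \ox x_1 \ox \cdots \ox Z$ by $a$, producing $aZ \ox x_1 \ox \cdots$; using the right $A$-action on $Z \in F \ox_B F^*$ and centrality of $e^{-\beta/2}$ to push $a$ across the first $\ox_A$ gives $W_n(ax_1 \ox x_2 \ox \cdots \ox x_n)$. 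For $T_x$, expanding the leftmost $Z$ as the strict sum $\sum_i e^{-\beta/2} u_i \ox u_i^*$ and applying $\psi(x)$ inserts a new $X$-slot and a new $F \ox_B F^*$ slot; one frame sum collapses $\sum_i e^{-\beta/2} u_i \ox u_i^* = Z$ strictly in the newly inserted $F \ox_B F^*$, while a parallel $k$-sum collapses $\sum_k u_k \ox u_k^* = \Id_F$ in the leftmost slot, and moving $e^{-\beta/2}$ across the first $\ox_A$ promotes this $\Id_F$ to a new leftmost $Z$, producing $Z \ox x \ox Z \ox x_1 \ox \cdots \ox Z = W_{n+1}(x \ox x_1 \ox \cdots \ox x_n)$.

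The main obstacle is this last calculation: carefully tracking strict convergence of the two frame sums through alternating $\ox_B$ and $\ox_A$ tensor products, and shuttling $e^{-\beta/2}$ between $F$-, $F^*$-, and $X$-slots using its centrality in $\Mult(A)$ and the compatibility between the left/right $A$-bimodule structures on $F \ox_B F^* \cong \End^0_B(F)$ and the left $A$-action on $X$. Once this Fock-intertwining is established, faithfulness of $\alpha$ reduces to the well-known faithfulness of the Fock representation $\cT_X \to \End_A(\Fock_X)$, and the verifications in the first step reduce to routine bookkeeping with frames.
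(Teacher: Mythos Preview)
Your verification of the representation axioms follows the paper's proof closely: both compute $\psi(x)^*\psi(y)$ directly, collapse one frame sum via $\sum_k {}_A\langle u_k \mid u_k\rangle = e^{\beta}$ and centrality of $e^{-\beta/2}$, and then check $A$-bilinearity by pushing frame sums around. The paper also makes explicit (as you do not) that $\psi$ lands in the \emph{compact} endomorphisms by using $X = \overline{A\cdot X \cdot A}$ together with compactness of $a\,\Id_F$; this is a small point you should add.

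Where you genuinely diverge is in the proof that $\alpha$ is injective. The paper simply asserts that since $\pi$ is injective the induced map on $\cT_X$ is injective---implicitly invoking a gauge-invariant uniqueness theorem for Toeplitz algebras (the gauge action on $\cT_{F^*XF}$ passes to $F \ox_B \cT_{F^*XF} \ox_B F^*$ and $\psi(x)$ visibly has degree~$1$). Your route is different and more hands-on: you show $\alpha(T)W = WT$ on generators, where $W$ is the Fock isometry of \cref{cor:dubbya}, and deduce injectivity from $W^*\alpha(\cdot)W = \Id_{\cT_X}$. This intertwining calculation is correct (regrouping $\sum_i u_i \ox u_i^* = \Id_F$ and $\sum_{k} e^{-\beta/2} u_{k} \ox u_{k}^* = Z$ after applying $\psi(x)$ to $W_n(\xi)$ does produce $W_{n+1}(x \ox \xi)$), and it has the virtue of being self-contained and of making the relationship between $\alpha$ and the Fock picture explicit---information the paper extracts separately in the proof of \cref{lem:fock-toe}. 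The paper's approach is quicker but leans on an off-the-shelf uniqueness theorem; yours trades that citation for a concrete computation you will essentially need again anyway.
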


\begin{rmk}
If $\cT_{F^*XF}$ is nonunital, we can replace $(u_j\ox 1)$ with $(u_j\ox b_i)$ where $b_i=(a_{i} - a_{i-1})^{1/2}$ for some approximate unit $(a_i)\in \cT_{F^*XF}$. This is true because 
$(u_j\ox b_i)$ is then a frame for $F\ox \cT_{F^*XF}$, by \cite[Proposition 1.2]{LN04}.
\end{rmk}

\begin{proof}[Proof of \cref{lem:bihilb_rep}]
First we define $\psi$ by the formula \labelcref{eq:sigh}, so that $\psi$ takes values in the adjointable operators $\End_{\cT_{F^*XF}}(F \ox_B \cT_{F^*XF})$.
We  show that $\psi(x)^* \psi(y) = \pi(\langle x \mid y\rangle_A)$. 
We compute,
\begin{align*}
    \psi(x)^* \psi(y)
    &= \sum_{i,j,k,\ell} {u_i \ox \langle u_j \ox T_{u_j^* \ox e^{-\beta/2} \cdot x \ox u_i} \mid u_k \ox T_{u_k^* \ox e^{-\beta/2}\cdot y \ox u_\ell } \rangle_B \ox (u_{\ell} \ox 1)^*}\\
    &= \sum_{i,j,k,\ell} {u_i \ox T_{u_j^* \ox e^{-\beta/2} \cdot x \ox u_i}^* \langle u_j \mid u_k\rangle_B T_{u_k^* \ox e^{-\beta/2}\cdot y \ox u_\ell }	 \ox  (u_{\ell} \ox 1)^*}\\
    &= \sum_{i,j,\ell} {u_i \ox T_{u_j^* \ox e^{-\beta/2}\cdot  x \ox u_i}^* T_{u_j^* \ox e^{-\beta/2}\cdot y \ox u_\ell }	 \ox  (u_{\ell} \ox 1)^*}\\
    &= \sum_{i,j,\ell} {u_i \ox \langle u_j^* \ox e^{-\beta/2}\cdot  x \ox u_i \mid u_j^* \ox e^{-\beta/2}\cdot y \ox u_\ell \rangle_B \ox ( u_{\ell} \ox 1)^*}.
\end{align*}
Now,
\begin{align*}
    &\sum_j	\langle u_j^* \ox e^{-\beta/2} \cdot  x \ox u_i \mid u_j^* \ox e^{-\beta/2}\cdot y \ox u_\ell\rangle_B \\
    &\quad = \sum_j \langle u_i \mid \pi(\langle e^{-\beta/2}\cdot  x \mid {}_A \langle u_j \mid u_j\rangle e^{-\beta/2} \cdot  y\rangle_A) \cdot  u_\ell \rangle_B\\
    &\quad = \langle u_i \mid \pi(\langle x \mid y\rangle_A) \cdot u_\ell\rangle_B,
\end{align*}
so in our original calculation, 
\begin{align*}
    \psi(x)^* \psi(y) &= \sum_{i,\ell} {u_i \ox \langle u_i \mid \pi(\langle x \mid y\rangle_A) u_\ell\rangle_B \ox  (u_\ell \ox 1)^*}\\
    &= \sum_{\ell} {\pi(\langle x \mid y\rangle_A)u_\ell \ox 1 \ox (u_{\ell} \ox 1)^*} 
    = \pi(\langle x \mid y\rangle_A).
\end{align*}
Since the left action on ${}_AF_B$ is faithful, it follows that $\psi$ is an isometric linear map. 
To see that $\pi(a) \psi(x) = \psi(a \cdot x)$ we compute 
\begin{align*}
    \pi(a) \sum_{i,j} {u_i \ox T_{u_i^* \ox e^{\beta/2} x \ox u_j} \ox (u_j \ox 1)^*} 
    &= \sum_{i,j,k} {u_k \langle u_k \mid \pi(a)u_i\rangle_B \ox T_{u_i^* \ox e^{\beta/2} x \ox u_j}\ox (u_j \ox 1)^*}\\
    &= \sum_{i,j,k} { u_k \ox  T_{ \langle u_k \mid \pi(a)u_i\rangle_B u_i^* \ox e^{\beta/2} x \ox u_j}\ox (u_j \ox 1)^*}\\
    &= \sum_{i,j,k} { u_k \ox  T_{ \langle u_k \mid \pi(a)u_i\rangle_B u_i^* \ox e^{\beta/2} x \ox u_j}\ox (u_j \ox 1)^*}\\
    &= \sum_{i,j,k} { u_k \ox  T_{ (u_i\langle u_i \mid \pi(a^*) u_k\rangle_B)^* \ox e^{\beta/2} x \ox u_j}\ox (u_j \ox 1)^*}\\
    &= \sum_{j,k} { u_k \ox  T_{ u_k^* \pi(a) \ox e^{\beta/2} x \ox u_j}\ox (u_j \ox 1)^*}
    = \psi(a\cdot x).
\end{align*}
A similar argument shows that $\psi(x) \pi(a) = \psi(x \cdot a)$.

With this observation, the density of $A\cdot X\cdot A$ in $X$ and the compactness of $a\Id_F$ shows that $\psi$ takes values in $\End_{\cT_{F^*XF}}^0(F \ox_B \cT_{F^*XF})$.
As $\pi$ is an injective $*$-homomorphism, the representation $(\pi,\psi)$ induces an injective $*$-homomorphism $\alpha \colon \cT_X \to \End_{\cT_{F^*XF}}^0(F \ox_B \cT_{F^*XF})$.
\end{proof}

For the next result, recall the map $W\colon \Fock_X\to F \ox_B \Fock_{F^*XF} \ox_B F^*$ and $P \in \End_A (F \ox_B \Fock_{F^* XF^*}\ox_B F^*)$ from \cref{cor:dubbya},
and the injection $\alpha \colon \cT_X \to \End_{\cT_{F^*XF}}^0(F \ox_B \cT_{F^*XF})$ from \cref{lem:bihilb_rep}.
Up to identifications, the conditional expectation defined below is simply compression by the projection $P$.

\begin{prop}
\label{lem:fock-toe}
  Let $(\phi,{}_AX_A)$ be a \mbox{$C^*$-correspondence} and let ${}_A F_B$ be a regular bi-Hilbertian bimodule.
  As in \cref{rmk:toeplitz-compacts}, identify $\End_{\cT_{F^*XF}}^0(F \ox_B \cT_{F^*XF})$ with its faithful representation as operators on $F \ox_B \Fock_{F^*XF} \ox_B F^*$. The linear map $\Phi_P \colon \End_{\cT_{F^*XF}}^0(F \ox_B \cT_{F^*XF}) \to \cT_X$ given by
  \[
    \Phi_{P}(T) = W^* P T P W 
  \]
  is a conditional expectation onto $\cT_X$ for the inclusion $\alpha$. 
  If ${}_AF_B$ is a
  Morita equivalence bimodule,
  then $\End_{\cT_{F^*XF}}^0(F \ox_B \cT_{F^*XF}) \cong \cT_X$.
\end{prop}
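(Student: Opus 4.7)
The plan is to simplify $\Phi_P$ using the structure of $W$, establish an intertwining identity between $\alpha$ and $W$, and treat the Morita case separately. By \cref{cor:dubbya}, $W$ is an isometry with $W^*W = \Id_{\Fock_X}$ and range $P(F\ox_B\Fock_{F^*XF}\ox_B F^*)$, so $WW^* = P$. Consequently $W^*P = W^*$ and $PW = W$, and hence $\Phi_P(T) = W^*TW$; linearity, complete positivity, and contractivity are immediate from compression by an isometry.

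The key intermediate claim is the intertwining identity $\alpha(S)W = WS$ for all $S\in\cT_X$. Since $\cT_X$ is generated by $T_a$ for $a\in A$ and $T_x$ for $x\in X$, it suffices to verify this on these generators. For $a\in A$, $\alpha(T_a) = \pi(a)$ acts on $W_n y$ by multiplication on the leftmost $F$-factor, and since $Z = e^{-\beta/2}\Id_F$ absorbs the left action of $a$ centrally, we get $\pi(a)(W_n y) = W_n(a\cdot y)$. For $x\in X$, combining the explicit formula for $\psi(x)$ from \cref{lem:bihilb_rep} with the action described in \cref{rmk:toeplitz-compacts}, a frame computation gives $\psi(x)(f\ox z\ox g^*) = \sum_i u_i\ox (u_i^*\ox e^{-\beta/2}x\ox f)\ox z\ox g^*$, which collapses via the frame relation to $Z\ox x\ox (f\ox z\ox g^*)$, and this is precisely $W(x\ox y)$ when $f\ox z\ox g^* = Wy$. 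Taking adjoints yields $W^*\alpha(S) = SW^*$. From these two identities we immediately deduce $\Phi_P\circ\alpha = \Id_{\cT_X}$ and the bimodule property $\Phi_P(\alpha(S_1)T\alpha(S_2)) = S_1\Phi_P(T)S_2$.

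The main obstacle is confirming that $\Phi_P(T)$ lies in $\cT_X$ for general $T$, not merely in $\End_A(\Fock_X)$. I would approach this by computing $W^*(f_1\ox c\ox f_2^*)W$ on generating elements, where $c\in\cT_{F^*XF}$ ranges over monomials $T_{\xi_1}\cdots T_{\xi_m}T_{\eta_1}^*\cdots T_{\eta_n}^*$ in creation and annihilation operators for $F^*XF$. Unfolding the action on $W_k y$ via \cref{rmk:toeplitz-compacts}, pairing adjacent $F$ and $F^*$ factors via $\langle\cdot\mid\cdot\rangle_B$, and collapsing the resulting frame sums using $\sum_i{}_A\langle u_i\mid u_i\rangle = e^{\beta}$, the output should rewrite as an explicit linear combination of products $T_{z_1}\cdots T_{z_m}T_{w_1}^*\cdots T_{w_n}^*\in\cT_X$, weighted by powers of $e^{-\beta/2}$ coming from the $Z$-factors in $W$. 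Extension to arbitrary $c$ then follows by linearity, $*$-preservation, and norm-closedness of $\cT_X$ in $\End_A(\Fock_X)$.

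For the final statement, if ${}_AF_B$ is a Morita equivalence bimodule, then \cref{cor:dubbya} gives $\Fock_X\cong F\ox_B\Fock_{F^*XF}\ox_B F^*$, so $W$ is unitary and $P = \Id$. In this case $\Phi_P = W^*(\cdot)W$ is a $*$-isomorphism from $\End_{\cT_{F^*XF}}^0(F\ox_B\cT_{F^*XF})$ onto its image in $\End_A(\Fock_X)$, and combining with $\Phi_P\circ\alpha = \Id_{\cT_X}$ and the injectivity of $\alpha$ from \cref{lem:bihilb_rep} forces $\alpha$ to be surjective, yielding $\End_{\cT_{F^*XF}}^0(F\ox_B\cT_{F^*XF})\cong\cT_X$.
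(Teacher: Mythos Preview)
Your approach is correct and organises the argument somewhat more cleanly than the paper. The observation that $WW^*=P$ reduces $\Phi_P$ to the compression $W^*(\cdot)W$, and your intertwining identity $\alpha(S)W=WS$ (checked on generators) immediately delivers both $\Phi_P\circ\alpha=\Id_{\cT_X}$ and the bimodule property. The paper instead verifies $\Phi_P\circ\alpha=\Id_{\cT_X}$ by a direct frame computation and then invokes Tomiyama's theorem to obtain the conditional expectation property; your route bypasses Tomiyama entirely. One small point: your claim that $\psi(x)(f\ox z\ox g^*)$ ``collapses to $Z\ox x\ox(f\ox z\ox g^*)$'' is notational shorthand that does not literally type-check for arbitrary $f\ox z\ox g^*$, but when restricted to the range of $W$ (which is all you use) it unpacks to the correct identity $\psi(x)W_n y=W_{n+1}(x\ox y)$.

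Where your proposal and the paper converge is the step you call ``the main obstacle'': showing that $\Phi_P$ takes values in $\cT_X$ rather than merely in $\End_A(\Fock_X)$. You sketch this as a frame computation on monomials $f\ox T_zT_w^*\ox g^*$; the paper carries this out in full, obtaining the explicit formula $\Phi_P(f\ox T_zT_w^*\ox g^*)=T_\xi\,e^{-\beta}\,{}_A\langle g_m\mid g_{n+m}\rangle\,T_\eta^*$ for concrete $\xi,\eta\in\Fock_X$ built from the tensor factors and the left inner product. Your sketch is correct in spirit, but the paper's computation is long and delicate (tracking how each adjacent $F$--$F^*$ pair collapses via $P_0$, and how the $T_w^*$ part produces the annihilation side), so be aware that a referee would expect you to fill in those details. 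The Morita case you handle correctly: once $W$ is unitary, $\Phi_P$ is a $*$-isomorphism onto its image, which the range computation and $\Phi_P\circ\alpha=\Id$ pin down as exactly $\cT_X$.
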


\begin{proof}
	Compression by $P$ defines an expectation from $\End_A(F \ox_B \Fock_{F^*XF} \ox_B F^*)$ onto $\End_A(W \Fock_X)$, which may be identified with $\End_A(\Fock_X)$ via $W$. 
    Since we can faithfully represent $\End_{\cT_{F^*XF}}^0(F \ox_B \cT_{F^*XF})$ on  $F \ox_B \Fock_{F^*XF} \ox_B F^*$, 
    we show that (up to identification) compression of an operator in $\End_{\cT_{F^*XF}}^0(F \ox_B \cT_{F^*XF})$ by $P$ yields an element of $\cT_X$.
	
	Fix $f,\,g \in F$, and $z = \bigotimes_{i=1}^{m} (f_i^* \ox x_i \ox g_i)$ and $w = \bigotimes_{i=m+1}^{m+n} (f_i^* \ox x_i \ox g_i)$ in $\Fock_{F^*XF}$. 
    Suppose that $\ell > 0$ (the $\ell = 0$ case follows from a similar, but simpler, argument) and fix $y = y_1 \ox \cdots \ox y_{\ell} \in \Fock_X$. 
    We claim that $P (f \ox T_{z} T_{w}^* \ox g^*) P Wy$ belongs to the image of $W$, from which it follows that $W^*P (f \ox T_{z} T_{w}^* \ox g^*) P W$ is an operator on $\Fock_X$. 	
    
    Fix a right frame $(u_i)$ for $F_B$ and observe that for any $f \in F$ we have $\sum_{i} \langle f\mid u_i\rangle_B \cdot  u_i^* = f^*$.    
    We compute,
	\begin{align*}
		 P (f \ox T_{z} T_{w}^* \ox g^*) PWy                                                                                       
		 &= \sum_j	P f \ox T_{z} T_{w}^*(  \langle g \mid u_j\rangle_B\cdot u_{j}^* \ox e^{-\beta/2} \cdot y_1 \ox Z \ox \cdots \ox Z \ox y_{\ell} \ox Z) \\
		 & =  P f \ox T_{z} T_{w}^* (g^* \ox e^{-\beta/2} \cdot y_1 \ox Z \ox \cdots \ox Z \ox y_{\ell} \ox Z).
	\end{align*}
	If $n > \ell$ then this is $0$, so assume $n \le \ell$.
    For the computation below, we first observe that for $f,g,h \in F$ and $x,y \in X$,
	\begin{align*}
		\sum_{j}\langle f^* \ox x \ox g \mid h^* \ox y \ox u_j\rangle_B\cdot u_j^*
		& = \sum_{j}\langle g \mid  \langle x \mid {}_A\langle f \mid h\rangle \cdot y\rangle_{A} \cdot u_j\rangle_B\cdot u_j^*\\
		&= g^* \cdot \langle x \mid {}_A\langle f \mid h\rangle \cdot y\rangle_{A}.
	\end{align*}	
   Using this at the third equality, we see that
	\begin{align*}
		 & T_{w}^*  (g^* \ox e^{-\beta/2} \cdot y_1 \ox Z \ox \cdots \ox Z \ox y_{\ell} \ox Z)                           \\
		 & \quad = \sum_{j,t} \Big\langle\bigotimes_{i=m+1}^{n+m} (f_i^* \ox x_i \ox g_i) \Bigm| g^*
		\ox e^{-\beta/2} \cdot y_1 \ox u_j \ox u_j^* \ox  e^{-\beta/2} \cdot y_2 \ox \cdots                                                                                                           \\
		 & \qquad \quad   \ox Z \ox y_{n} \ox u_{t}\Big\rangle_B \cdot u_t^*
		\ox e^{-\beta/2} \cdot y_{n+1} \ox Z \ox \cdots \ox y_{\ell} \ox Z                                                                                         \\
		 & \quad = \sum_{j,t} \
         \Big\langle\bigotimes_{i=m+2}^{n+m} (f_i^* \ox x_i \ox g_i) \Bigm| \langle f_{m+1}^* \ox x_{m+1} \ox g_{m+1} \mid g^*
		\ox e^{-\beta/2} \cdot y_1 \ox u_j\rangle_B \cdot u_j^* \ox e^{-\beta/2} \cdot y_2                                                                                                            \\
		 & \qquad \quad \ox \cdots  \ox Z \ox y_{n} \ox u_{t}\Big\rangle_B \cdot u_t^*
		\ox e^{-\beta/2} \cdot  y_{n+1} \ox Z \ox \cdots \ox y_{\ell} \ox Z                                                                                         \\
		 & \quad = \sum_{t}  \Big\langle\bigotimes_{i=m+2}^{n+m} (f_i^* \ox x_i \ox g_i) \Bigm| g_{m+1}^* \cdot \langle x_{m+1} \mid {}_A \langle f_{m+1} \mid g\rangle e^{-\beta/2} \cdot y_{1}\rangle_A \ox y_{2} \ox \cdots \\
		 & \qquad \quad   \ox Z \ox y_{n} \ox u_{t}\Big\rangle_B \cdot  u_t^*
		\ox e^{-\beta/2} \cdot y_{n+1} \ox Z \ox \cdots \ox y_{\ell} \ox Z.
	\end{align*}
	Let $a_{1} = \langle x_{m+1} \mid {}_A \langle f_{m+1} \mid g\rangle e^{-\beta/2} \cdot y_{1}\rangle_A$ and for $1 < k \le n$ inductively define
	\begin{align*}
		a_k &= \big\langle x_{m+k} \bigm| {}_A \langle f_{m+k} \mid g_{m+k-1}\rangle a_{k-1}e^{-\beta/2} \cdot y_{k}\big\rangle_A\\
		&= \big\langle e^{-\beta/2} {}_A \langle g_{m+k-1} \mid f_{m+k}\rangle \cdot x_{m+k}\bigm| a_{k-1} \cdot y_k\big\rangle_A.
	\end{align*}		
	An inductive argument shows that
	\begin{align*}
		 & T_{w}^*  (g^* \ox e^{-\beta/2} \cdot y_1 \ox Z \ox \cdots \ox Z \ox y_{\ell} \ox Z) \\
		 & \qquad =
		g_{n+m}^* \cdot a_n \ox e^{-\beta/2} \cdot y_{n+1}  \ox Z \ox \cdots \ox y_{\ell} \ox Z.
	\end{align*}
	Observe that
    $a_k = T_{ e^{-\beta/2} {}_A\langle g_{m+k-1} \mid f_{m+k} \rangle \cdot x_{m+k}}^*  a_{k-1}\cdot y_k$,
	where $T_{ e^{-\beta/2} {}_A\langle g_{m+k-1} \mid f_{m+k} \rangle \cdot x_{m+k}}^* \in \cT_X$. 
	Let 
%
\[
    \eta \coloneqq  e^{-\beta/2} {}_A \langle g \mid f_{m+1} \rangle \cdot x_{m+1} \ox  e^{-\beta/2} {}_A \langle g_{m+1} \mid f_{m+2} \rangle \cdot x_{m+2} \ox   
    \cdots  \ox  e^{-\beta/2} {}_A \langle g_{m+n-1} \mid f_{m+n} \rangle \cdot x_{m+n}.
\]
Proceeding inductively, we find that $a_n = T^*_{ \eta }  (y_1 \ox \cdots \ox y_{n})$.
	
Returning to the original calculation, we have
	\begin{align*}
		 & P (f \ox T_{z} T_{w}^* \ox g^*) P W y                                                        \\
		 & \quad = P f \ox  T_{z}
		(	g_{n+m}^* \cdot a_n \ox      e^{-\beta/2} \cdot y_{n+1}  \ox Z   \ox y_{n+2}  \ox Z \ox \cdots \ox  Z \ox y_{\ell} \ox Z )                                                                                      \\
		 & \quad = P f \ox f_1^* \ox x_1 \ox g_1^* \ox \cdots \ox f_m^* \ox x_m \ox g_m^* \ox   g_{n+m}^* \cdot  a_n \\
		 & \qquad  \quad \ox  e^{-\beta/2} \cdot y_{n+1}  \ox Z   \ox y_{n+2}  \ox Z \ox \cdots \ox  Z \ox y_{\ell} \ox Z \\
		 & \quad = P_0(f \ox f_1^*) \ox  x_1 \ox P_0 (g_1 \ox f_2^*)\ox  x_2 \ox \cdots \ox   P_0(g_{m-1} \ox f_m^*) \ox x_m \ox P_0(g_{m} \ox g_{n+m}^*) \cdot a_n                     \\
		 & \qquad \quad \ox  e^{-\beta/2} \cdot y_{n+1}  \ox Z   \ox y_{n+2}  \ox Z \ox \cdots \ox  Z \ox y_{\ell} \ox Z .
	\end{align*}
Recall that $P_0(f \ox f_1^*) = e^{-\beta/2} {}_A\langle f \mid f_1\rangle Z$ and let
	\begin{align*}
		\xi \coloneqq e^{-\beta/2} {}_A\langle f \mid f_1\rangle \cdot x_1 \ox e^{-\beta/2}  {}_A\langle g_1 \mid f_2\rangle \cdot x_2 \ox \cdots \ox e^{-\beta/2} {}_A\langle g_{m-1} \mid f_m\rangle\cdot x_m.
	\end{align*}
	Then $
		P (f \ox T_{z} T_{w}^* \ox g^*) P Wy = W(T_{\xi}  e^{-\beta}  {}_A\langle g_m \mid g_{n+m}\rangle T_{\eta}^* y),
	$
	so
	\[
	\Phi_P(f \ox T_{z} T_{w}^* \ox g^*) = T_{\xi}  e^{-\beta}  {}_A\langle g_m \mid g_{n+m}\rangle T_{\eta}^* = T_\xi P_0(g_m \ox g_{n+m}^*) T_\eta^*.
	\]
	
	With $\alpha \colon \cT_X \to \End_{\cT_{F^*XF}}^0(F \ox_B \cT_{F^*XF})$ as in \cref{lem:bihilb_rep} and $x \in X_A$, the preceding calculation shows that 
	\[
		\Phi_P(\alpha(T_x)) = W^*P \sum_{i,j} u_i \ox T_{u_i^* \ox e^{-\beta/2} x \ox u_j} \ox u_j^* P W = T_x.
	\]
	Extending the argument to elementary tensors $x,y \in \Fock_X$ we find that $\Phi_P (\alpha(T_xT_y^*)) = T_x T_y^*$
	from which it follows that $\Phi_P \circ \alpha = \Id_{\cT_X}$. 
	Tomiyama's Theorem~\cite[Theorem 1.5.10]{Brown-Ozawa} implies that $\Phi_{P}$ is a conditional expectation  

	If $F$ is a Morita equivalence, \cref{cor:dubbya} implies that $P = \Id$. 
    Consequently, $\Phi_P$ defines an isomorphism between $\End_{\cT_{F^*XF}}^0(F \ox_B \cT_{F^*XF})$ and $\cT_X$.
\end{proof}

\cref{lem:fock-toe} shows that conjugating a \mbox{$C^*$-correspondence} ${}_A X_A$ by a bi-Hilbertian bimodule ${}_A F_B$ of finite index 
yields a conditional expectation $\Phi_P \colon F \ox_B \cT_{F^*XF} \ox_B F^* \to \cT_X$. 

Now suppose that ${}_A X_A$ is a regular \mbox{$C^*$-correspondence}. If ${}_A F_B$ is regular, then so is the $B$--$B$-correspondence $F^*XF$. 
Since $\End_{B}^0(\Fock_{F^*XF})$ is a $B$-ideal in $\cT_{F^*XF}$, \cref{lem:modulequotients} implies that
\[
\frac{F \ox_B \cT_{F^*XF} \ox_B F^*}{F \ox_B \End_{B}^0(\Fock_{F^*XF}) \ox_B F^*}\cong F \ox_B \cO_{F^* X F }\ox_B F^*.
\]
In order to apply \cref{lem:quotient-postitive-map} to $\Phi_P$ we require the following lemma.

\begin{lemma}\label{lem:bihilb-projection-compacts-again}
	Let $\Phi_P \colon F \ox_B \cT_{F^*XF} \ox_B F^* \to \cT_X$ be the expectation of \cref{lem:fock-toe}. Then $\Phi_P (F \ox_B \End_{B}^0(\Fock_{F^*XF})\ox_B F^*) \subseteq \End^0_A(\Fock_X)$.
\end{lemma}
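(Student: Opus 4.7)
My plan is to reduce the verification to a dense set of generators and then show each generator's image under $\Phi_P$ is compact. The algebra $\End_B^0(\Fock_{F^*XF})$ is the closed linear span of rank-one operators $\Theta_{z,w}$ for $z, w$ elementary tensors in $\Fock_{F^*XF}$, so $F \ox_B \End_B^0(\Fock_{F^*XF}) \ox_B F^*$ is the closed linear span of elements of the form $f \ox \Theta_{z,w} \ox g^*$ with $f, g \in F$. Since $\Phi_P$ is a bounded linear map and $\End_A^0(\Fock_X)$ is norm-closed, it suffices to verify $\Phi_P(f \ox \Theta_{z,w} \ox g^*) \in \End_A^0(\Fock_X)$ for each such elementary generator.

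Fix $z \in (F^*XF)^{\ox m}$ and $w \in (F^*XF)^{\ox n}$. As explained in \cref{rmk:toeplitz-compacts}, the operator $f \ox \Theta_{z,w} \ox g^*$ acts on $F \ox_B \Fock_{F^*XF} \ox_B F^*$ by sending $F \ox_B (F^*XF)^{\ox n} \ox_B F^*$ into $F \ox_B (F^*XF)^{\ox m} \ox_B F^*$ and annihilating every other homogeneous component. By \cref{cor:dubbya}, the projection $P$ decomposes as the strict sum $\sum_k P_k$, where $P_k$ projects onto $W_k(X^{\ox k})$. Combined with the isometry identities $W_k^* W_k = \Id$ and $W_k W_k^* = P_k$, this reduces the problem to showing that
\[
W_m^*(f \ox \Theta_{z,w} \ox g^*)W_n \colon X^{\ox n} \to X^{\ox m}
\]
is compact, since $\Phi_P(f \ox \Theta_{z,w} \ox g^*)$ vanishes on every other homogeneous component of $\Fock_X$.

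It remains to show this restricted map is compact. My plan is to compute it explicitly. For $y \in X^{\ox n}$, I would expand $W_n y$ using a right frame $(u_i)_i$ of $F_B$ together with the identity $Z = e^{-\beta/2} \sum_i u_i \ox u_i^*$, apply $f \ox \Theta_{z,w} \ox g^*$ (through which the pairing with $g \ox w$ naturally arises), and then apply $W_m^*$ via the explicit formula in \cref{lem:fock-complement}. The right frame relation $\sum_i u_i \langle u_i \mid \xi\rangle_B = \xi$ and the adjointability of the right $B$-action with respect to the left $A$-valued inner product should combine to make the frame sums telescope, yielding an expression of the form $\tilde z \cdot \langle \tilde w \mid y\rangle_A$ for $\tilde z \in X^{\ox m}$ and $\tilde w \in X^{\ox n}$ constructed from $f, g, z, w$ (in the degenerate case $m = n = 0$ the expression instead reduces to left multiplication by an element of $A$, which is a norm-limit of rank-one operators on $A_A$). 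In either case the restricted operator lies in $\End_A^0(\Fock_X)$. The main obstacle will be the careful bookkeeping required to manage the central multiplier $e^{-\beta}$ and the alternation between $A$- and $B$-valued inner products when telescoping the frame sums; once this is completed, compactness is immediate.
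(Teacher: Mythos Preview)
Your proposal is correct and follows essentially the same approach as the paper: reduce to rank-one generators $f\ox\Theta_{z,w}\ox g^*$ and compute $\Phi_P$ on them explicitly to obtain a rank-one operator on $\Fock_X$. The paper's proof only writes out the case $z,w\in F^*XF$ (degree~$1$), obtaining the formula
\[
\Phi_P(f\ox(f_1^*\ox x_1\ox g_1)\ox(f_2^*\ox x_2\ox g_2)^*\ox g^*)
= e^{-\beta}{}_A\langle f\mid f_1\rangle \cdot x_1\cdot e^{-\beta}{}_A\langle g_1\mid g_2\rangle \ox x_2^* \cdot e^{-\beta}{}_A\langle f_2\mid g\rangle,
\]
and then asserts ``the result follows,'' implicitly relying on the detailed computation already carried out in the proof of \cref{lem:fock-toe} (which handles $T_zT_w^*$ for arbitrary degrees and yields $T_\xi P_0(g_m\ox g_{n+m}^*)T_\eta^*$). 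Your plan is more explicit about the general-degree case and the reduction via $P=\sum_k P_k$ and $W_kW_k^*=P_k$, but the underlying idea---direct computation via frames and the formula for $W_n^*$---is the same.
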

\begin{proof} 
For $f,f_1,f_2,g,g_1,g_2\in F$ and $x_1,x_2\in X$ we have
\begin{align*}
&\Phi_P(f\ox(f_1^*\ox x_1\ox g_1)\ox(f_2^*\ox x_2\ox g_2^*)^*\ox g^*)\\
&\qquad =\!e^{-\beta}{}_A\langle f\mid f_1\rangle \cdot x_1\cdot  e^{-\beta}{}_A\langle g_1\mid g_2\rangle \ox x_2^* \cdot e^{-\beta}{}_A\langle f_2\mid g\rangle,
\end{align*}	
which is a rank-1 operator on $X_A$. The result follows.
\end{proof}

%
%
%
%

\begin{thm}
\label{thm:to-be}
Let $(\phi,{}_AX_A)$ be a regular \mbox{$C^*$-correspondence} and let ${}_AF_B$ be a regular bi-Hilbertian bimodule. Then,
\begin{enumerate}
	\item the expectation $\Phi_P$ of \cref{lem:fock-toe} descends to a strict completely positive map $\wt{\Phi}_P \colon F \ox_B \cO_{F^* X F }\ox_B F^* \to \cO_X$;
	\item \cref{lem:quotient-expectation} implies that the associated KSGNS correspondence satisfies
	\[
	(\pi_{\wt{\Phi}_P}, \End_{\cO_{F^*XF}}^0(F \ox_B \cO_{F^*XF}) \ox_{\wt{\Phi}_P} \cO_X)\cong (\Id,L^2_{\cO_X}(\End_{\cO_{F^*XF}}^0(F \ox_B \cO_{F^*XF}),\wt{\Phi}_P)).
	\] 
\end{enumerate}
\end{thm}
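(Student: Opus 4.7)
The plan is to apply \cref{lem:quotient-postitive-map} for (i) and \cref{lem:quotient-expectation} for (ii) to the conditional expectation $\Phi_P$ produced by \cref{lem:fock-toe}. Regularity of ${}_AX_A$ gives $J_X = A$, and regularity of ${}_AF_B$ makes the $B$--$B$-correspondence $F^*XF$ regular with $J_{F^*XF} = B$, so the defining exact sequences for the two Cuntz--Pimsner algebras are
\[
0 \to \End^0_A(\Fock_X) \to \cT_X \to \cO_X \to 0 \quad\text{and}\quad 0 \to \End^0_B(\Fock_{F^*XF}) \to \cT_{F^*XF} \to \cO_{F^*XF} \to 0.
\]
Conjugating the latter by ${}_AF_B$ and invoking \cref{lem:modulequotients}, together with \cref{lem:bimod-algs}(iii), identifies $F \ox_B \cO_{F^*XF} \ox_B F^* \cong \End^0_{\cO_{F^*XF}}(F \ox_B \cO_{F^*XF})$ with the quotient of $F \ox_B \cT_{F^*XF} \ox_B F^*$ by the ideal $F \ox_B \End^0_B(\Fock_{F^*XF}) \ox_B F^*$.

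For (i), \cref{lem:bihilb-projection-compacts-again} tells us $\Phi_P$ sends the top ideal into the bottom one, so the resulting diagram of exact sequences commutes. Since $\Phi_P$ is a conditional expectation it is strict and completely positive, and a completely positive splitting of the top row exists (e.g.\ via Choi--Effros, combined with \cite[Corollary~7.4]{KatsuraCPalg} under nuclearity). \cref{lem:quotient-postitive-map} then yields the strict completely positive $\wt{\Phi}_P \colon F \ox_B \cO_{F^*XF} \ox_B F^* \to \cO_X$.

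For (ii), I would feed the same diagram into \cref{lem:quotient-expectation} with downward map $\Phi_P$ and upward inclusion $\alpha \colon \cT_X \to F \ox_B \cT_{F^*XF} \ox_B F^*$ supplied by \cref{lem:bihilb_rep}. \cref{lem:fock-toe} already verified $\Phi_P \circ \alpha = \Id_{\cT_X}$, so $\Phi_P$ is an honest conditional expectation for $\alpha$. The two remaining hypotheses are the nondegeneracy of $\alpha$ and the containment $\alpha(\End^0_A(\Fock_X)) \subseteq F \ox_B \End^0_B(\Fock_{F^*XF}) \ox_B F^*$; the former follows from the frame expansion defining $\alpha$ together with nondegeneracy of the left action of $A$ on $F$, while the latter follows on generators $T_\xi T_\eta^*$ from the explicit formula for $\alpha$, using that $F \ox_B \End^0_B(\Fock_{F^*XF}) \ox_B F^*$ is an ideal in $F \ox_B \cT_{F^*XF} \ox_B F^*$. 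With these checks in hand, the conclusion of \cref{lem:quotient-expectation} is precisely the stated isomorphism.

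The main obstacle is the bookkeeping for nondegeneracy of $\alpha$ together with its descent to the Cuntz--Pimsner quotients: $\cT_X$ is typically nonunital and $\alpha$ is not surjective, so strict approximation must be carried out through the frame-based formula \labelcref{eq:sigh}. The regularity hypotheses on both ${}_AX_A$ and ${}_AF_B$ are imposed precisely to keep this argument clean, since without them the covariance ideals would obstruct the vertical map in the key commuting diagram.
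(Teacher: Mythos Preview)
Your proposal is correct and follows essentially the same route as the paper: invoke \cref{lem:bihilb-projection-compacts-again} together with \cref{lem:quotient-postitive-map} for (i), and \cref{lem:quotient-expectation} for (ii). The paper's proof is terser---it does not spell out the identification of the quotient via \cref{lem:modulequotients}, the existence of the completely positive splitting, or the verification that $\alpha$ is nondegenerate and carries $\End^0_A(\Fock_X)$ into the kernel ideal---whereas you correctly flag these as the points requiring attention.
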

\begin{proof}
	Together, \cref{lem:bihilb-projection-compacts-again} and \cref{lem:quotient-postitive-map} show that $\Phi_P$ descends 
    to a strict completely positive map $\wt{\Phi}_P \colon F \ox_B \cO_{F^* X F }\ox_B F^* \to \cO_X$. 
\end{proof}

Since $\cO_{F^*XF} \ox_B F^*$ induces a Morita equivalence between $\cO_{F^*XF}$ and $F \ox \cO_{F^*XF} \ox_B F^*$ we can also produce an $\cO_{F^*XF}$--$\cO_X$-correspondence.
\begin{corl}
	\label{corl:biHilb_cp_correspondence}
Let $(\phi,{}_AX_A)$ be a regular \mbox{$C^*$-correspondence} and let ${}_AF_B$ be a regular bi-Hilbertian bimodule. Then
	\begin{align*}
		& (\cO_{F^*XF} \ox_B F^*) \ox_{\wt{\Phi}_P} \cO_X \\ 
		& \qquad \coloneqq
		(\cO_{F^*XF} \ox_B F^*) \ox_{\End_{\cO_{F^*XF}}^0(F \ox_B \cO_{F^*XF})} (\End_{\cO_{F^*XF}}^0(F \ox_B \cO_{F^*XF}) \ox_{\wt{\Phi}_P} \cO_X)\\
		& \qquad \cong (\cO_{F^*XF} \ox_B F^*) \ox L^2_{\cO_X}(\End_{\cO_{F^*XF}}^0(F \ox_B \cO_{F^*XF}),\wt{\Phi}_P)
	\end{align*}
	is a nondegenerate $\cO_{F^*XF}$--$\cO_X$-correspondence. 
\end{corl}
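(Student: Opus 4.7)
The plan is to build the desired $\cO_{F^*XF}$–$\cO_X$-correspondence by composing two ingredients: a Morita equivalence on the left and the KSGNS output of $\wt{\Phi}_P$ on the right, and then to verify nondegeneracy.

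First, I would identify $\cO_{F^*XF} \ox_B F^*$ as a Morita equivalence bimodule. Apply \cref{lem:bimod-algs}(i) with $F_B$ the right $B$-module and $C = \cO_{F^*XF}$, which is a $B$-algebra via the canonical inclusion $B \to \cO_{F^*XF}$ (nondegenerate since $(\phi,{}_A F^*XF_B)$ is a regular correspondence). This shows $F \ox_B \cO_{F^*XF}$ is an $\End^0_{\cO_{F^*XF}}(F \ox_B \cO_{F^*XF})$–$\cO_{F^*XF}$-Morita equivalence bimodule, so its conjugate $\cO_{F^*XF} \ox_B F^*$ is an $\cO_{F^*XF}$–$\End^0_{\cO_{F^*XF}}(F \ox_B \cO_{F^*XF})$-Morita equivalence.

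Next, by \cref{thm:to-be}, the strict completely positive map $\wt{\Phi}_P$ produces via KSGNS a nondegenerate $\End^0_{\cO_{F^*XF}}(F \ox_B \cO_{F^*XF})$–$\cO_X$-correspondence, and \cref{lem:quotient-expectation} identifies the underlying module with $L^2_{\cO_X}(\End^0_{\cO_{F^*XF}}(F \ox_B \cO_{F^*XF}),\wt{\Phi}_P)$. Composing a Morita equivalence with a nondegenerate correspondence over the intermediate algebra yields a nondegenerate correspondence in the Enchilada category, so the internal tensor product
\[
(\cO_{F^*XF} \ox_B F^*) \ox_{\End^0_{\cO_{F^*XF}}(F \ox_B \cO_{F^*XF})} (\End^0_{\cO_{F^*XF}}(F \ox_B \cO_{F^*XF}) \ox_{\wt{\Phi}_P} \cO_X)
\]
is a nondegenerate $\cO_{F^*XF}$–$\cO_X$-correspondence. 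Replacing the second factor by the isomorphic $L^2$-space description then yields the advertised alternative form.

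I expect the only real subtlety to be checking that $B$ sits nondegenerately in $\cO_{F^*XF}$ so that \cref{lem:bimod-algs} applies, and verifying that the identification used when passing to the $L^2$-description is compatible with the left $\cO_{F^*XF}$-action inherited from the Morita equivalence. Both should follow from regularity of $(\phi,{}_A X_A)$ and ${}_A F_B$ (which guarantees regularity of ${}_B F^*XF_B$, and hence a nondegenerate inclusion of the coefficient algebra into its Cuntz–Pimsner algebra), together with the functoriality of the internal tensor product. Because every individual step invokes an already-established lemma, the proof reduces to assembling these pieces rather than any new computation.
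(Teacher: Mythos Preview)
Your proposal is correct and matches the paper's approach: the paper states the corollary as an immediate consequence of the sentence preceding it (``Since $\cO_{F^*XF} \ox_B F^*$ induces a Morita equivalence between $\cO_{F^*XF}$ and $F \ox \cO_{F^*XF} \ox_B F^*$\ldots'') together with \cref{thm:to-be}, which is precisely the composition of Morita equivalence with the KSGNS correspondence that you outline. Your identification of the subtleties (nondegeneracy of $B\hookrightarrow\cO_{F^*XF}$ and compatibility of the $L^2$ description with the left action) is accurate and those points are indeed handled by regularity, exactly as you indicate.
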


Concretely describing  $\cO_{F^*XF}$---let alone $(\cO_{F^*XF} \ox_B F^*) \ox_{\wt{\Phi}_P} \cO_X$---in examples tends to be an involved process. We finish with an example from covering spaces.
\begin{example}

Let $M$ be a compact Hausdorff space with a homeomorphism $\gamma\colon M\to M$ and dual automorphism $\gamma^* \colon C(M)\to C(M)$. 
Then ${}_{\gamma^*} C(M)_{C(M)}$ is a Morita equivalence bimodule. For $g_1,g_2,g_3\in C(M)$ the left and right module structures are given by
\[
(g_1\cdot g_2\cdot g_3)(x)=g_1(\gamma(x))g_2(x)g_3(x),\quad x\in M.
\]
The right inner product is the obvious one, and the left inner product is
\[
{}_{C(M)}\langle g_1\mid g_2\rangle(x)=(\overline{g_1}g_2)(\gamma^{-1}(x)),\quad x\in M.
\]

Suppose that $\pi\colon \wt{M} \to M$ is a finite-to-one covering map and let $F_{C(\wt{M})} \coloneqq C(\wt{M})_{C(\wt{M})}$. 
Then ${}_{\pi^*}F_{C(\wt{M})}$ is a bi-Hilbertian bimodule with left $C(M)$-valued inner product given by
\[
{}_{C(M)}\langle f_1 \mid f_2\rangle(x) = \sum_{\wt{x} \in \pi^{-1}(x)} f_1(\wt{x})\, \overline{f_2(\wt{x})} \qquad f_1,f_2 \in C(\wt{M}). 
\]
We can form the bi-Hilbertian $C(\wt{M})$-bimodule $F^*\ox_{\gamma^*}C(M)_{}\ox_{C(M)} F$ and consider the resulting noncommutative dynamics on $C(\widetilde{M})$.
An energetic exercise shows that
\[
F^*\ox_{\gamma^*}C(M)_{}\ox_{C(M)}F\cong C\big(\wt{M}{}_{\pi}\times_{\gamma\circ \pi}\wt{M}\big)=C(\{(\wt{x},\wt{y}) \in \wt{M} \times \wt{M} \colon \pi(\wt{x})=\gamma\circ\pi(\wt{y})\})
\]
as bi-Hilbertian $C(\wt{M})$-bimodules. 
The right inner product on $F^*\ox_{\gamma^*}C(M)_{}\ox_{C(M)} F$ is given by
\begin{align}
\langle f^*_1\ox h\ox f_2\mid g_1^*\ox k\ox g_2\rangle_{C(\wt{M})}(\tilde{x})
&=\Big\langle f_2\Bigm| \pi^*\big(\langle h\mid \gamma^*\langle f_1^*\mid g_1^*\rangle_{C(M)}k\rangle_{C(M)}\big)g_2\Big\rangle_{C(\wt{M})}(\wt{x})\nonumber\\
&=\ol{f_2(\wt{x})}\big(\langle h\mid \gamma^*\langle f_1^*\mid g_1^*\rangle_{C(M)}k\rangle_{C(M)}\big)(\pi(\wt{x}))g_2(\wt{x})\nonumber\\
&=\ol{f_2(\wt{x})}\ol{h(\pi(\wt{x}))}\sum_{\tilde{y}\in\pi^{-1}(\gamma(\pi(\wt{x})))}(f_1 \ol{g_1})(\tilde{y})k(\pi(\wt{x}))g_2(\wt{x}).
\label{eq:eff-exx-eff}
\end{align}

We use the notion of topological graphs from \cite{KatsuraTopGraph}.
Regarding ${}_{\gamma^*} C(M)_{C(M)}$ as   the graph correspondence of the topological graph $M \overset{\gamma}{\leftarrow} M \overset{\Id}{\rightarrow} M$, the correspondence $F^*\ox_{\gamma^*}C(M)_{}\ox_{C(M)}F$ is isomorphic to the graph correspondence of the fibre product $\wt{M} \overset{p_1}{\leftarrow} \wt{M}{}_{\pi}\times_{\gamma\circ \pi}\wt{M} \overset{p_2}{\rightarrow} \wt{M}$, where $p_1$ and $p_2$ are the projections onto the first and second components of $\wt{M}{}_{\pi}\times_{\gamma\circ \pi}\wt{M}$.
The correspondence between Cuntz--Pimsner algebras that we obtain from \cref{corl:biHilb_cp_correspondence} is
\[
\cO_{F^*C(M)F}\ox F^*\ox_{\tilde{\Phi}_P}\cO_{X}.
\]
Treatment of the algebra $\cO_{F^*C(M)F}$ is difficult, even when the dynamics on $M$ is trivial, and we leave a detailed description of this algebra and the resulting correspondence to another place.
\end{example}

{\footnotesize

}

\end{document}